\renewcommand{\le}{\leqslant}
\renewcommand{\ge}{\geqslant}
\definecolor{mno}{rgb}{0.5,0.1,0.5}
\newcommand{\R}{\mathds R}
\newcommand{\I}{\mathds 1}
\newtheorem{theorem}{Theorem}[section]
\newtheorem{lemma}[theorem]{Lemma}
\newtheorem{proposition}[theorem]{Proposition}
\newtheorem{corollary}[theorem]{Corollary}
\theoremstyle{definition}
\newtheorem{example}[theorem]{Example}
\newtheorem{remark}[theorem]{Remark}
\begin{document}
\allowdisplaybreaks
\title[Weighted Poincar\'{e}
Inequalities for Nonlocal Dirichlet Forms] {\bfseries
Weighted Poincar\'{e} Inequalities for Nonlocal Dirichlet Forms}

\author{Xin Chen \quad Jian Wang}
\thanks{\emph{X.\ Chen:}
   Grupo de Fisica Matematica, Universidade de Lisboa, Av Prof Gama Pinto 2, 1649-003 Lisbon, Portugal. \texttt{chenxin\_217@hotmail.com}}

  \thanks{\emph{J.\ Wang:}
   School of Mathematics and Computer Science, Fujian Normal University, 350007 Fuzhou, P.R. China. \texttt{jianwang@fjnu.edu.cn}}

\date{}

\maketitle

\begin{abstract}
Let $V$ be a locally bounded measurable function such that $e^{-V}$ is bounded and belongs to $L^1(dx)$, and let $\mu_V(dx):=C_V e^{-V(x)}\,dx$ be a probability measure. We
present the criterion for the weighted Poincar\'{e} inequality of
the non-local Dirichlet form
$$
    D_{\rho,V}(f,f):=\iint(f(y)-f(x))^2\rho(|x-y|)\,dy\, \mu_V(dx)
$$ on $L^2(\mu_V)$.
Taking $\rho(r)={e^{-\delta r}}{r^{-(d+\alpha)}}$ with
$0<\alpha<2$ and $\delta\geqslant 0$, we get some conclusions for
general fractional Dirichlet forms, which can be regarded as a
complement of our recent work \cite{WW}, and an improvement of the
main result in \cite{MRS}. In this especial setting, concentration of
measure for the standard Poincar\'{e} inequality is also derived.

Our technique is based on the Lyapunov conditions for the associated
truncated Dirichlet form,  and it is considerably efficient for the
weighted Poincar\'{e} inequality of the following non-local
Dirichlet form
$$
    D_{\psi,V}(f,f):=\iint(f(y)-f(x))^2\psi(|x-y|)\,e^{-V(y)}\,dy\,e^{-V(x)}\,dx
$$
on $L^2(\mu_{2V})$, which is associated with symmetric Markov
processes under Girsanov transform of pure jump type.
\medskip

\noindent\textbf{Keywords:} Non-local Dirichlet form; weighted
Poincar\'{e} inequality; Lyapunov conditions; concentration of
measure.

\medskip

\noindent \textbf{MSC 2010:} 60G51; 60G52; 60J25; 60J75.
\end{abstract}
\section{Introduction and Main Results}\label{sec1}
\subsection{Background for Functional Inequalities of Fractional Dirichlet Forms}
For $\alpha\in(0,2)$, let $\mu_\alpha$ be a rotationally symmetric
stable infinite divisible probability distribution, such that
$$
\hat{\mu}_\alpha(\xi):=\int e^{i x\cdot\xi}\,\mu_\alpha(dx) =
e^{-\frac{1}{\alpha}|\xi|^{\alpha}},\quad  \xi\in\R^d.
$$
For any $f\in C_b^\infty(\R^d)$, the set of smooth functions with
bounded derivatives of every order, define
\begin{equation}\label{sec1.1.1}D_\alpha(f,f):=\iint  \frac{(f(y)-f(x))^2}{|y-x|^{d+\alpha}}\,dy\,
\mu_\alpha(dx).\end{equation} Then, $(D_\alpha, C_b^\infty(\R^d))$
can be extended to a \emph{non-local Dirichlet form} associated with
the operator
$$L_\alpha=-(-\Delta)^{\alpha/2}-x\cdot\nabla,\qquad\alpha\in (0,2),$$ which is the
infinitesimal generator of an Ornstein-Uhlenbeck process driven by
symmetric $\alpha$-stable L\'{e}vy processes. Poincar\'{e}
inequalities for $(D_\alpha, C_b^\infty(\R^d))$ were studied in
\cite[Theorem 1.3 and Corolalry 1.4]{RW}.

\medskip

In \eqref{sec1.1.1} the singular kernel ${|y-x|^{-(d+\alpha)}}\,dy$ is the L\'{e}vy measure associated with $\mu_\alpha$, which is a strong constraint to study functional inequalities for general non-local Dirichlet forms. The first breakthrough in this direction was established in \cite{MRS} in virtue of the methods from harmonic analysis. The main result in \cite{MRS} (see \cite[Theorem 1.2]{MRS})
 states that, \emph{if $e^{-V}\in L^1(dx)\cap C^2(\R^d)$ such that for some constant
$\varepsilon>0$,
\begin{equation}\label{sec1.1.2}\frac{(1-\varepsilon)|\nabla V|^2}{2} -\Delta V\to\infty,\qquad
x\to \infty,\end{equation} then there exist two positive constants
$\delta$ and $C_0$ such that for all $f\in C_b^\infty(\R^d)$ with
$\int f(x)e^{-V(x)}\,dx=0$,
\begin{equation}\label{sec1.1.3}\aligned\int f^2(x)\big(1+|\nabla  V(x)|^{\alpha}\big) &e^{-V(x)}\,dx\\
\leqslant &C_0\iint
\frac{(f(y)-f(x))^2}{|y-x|^{d+\alpha}}e^{-\delta|y-x|}\,dy
e^{-V(x)}\,dx.\endaligned\end{equation}}

\medskip

On the other hand, as a generalization of \eqref{sec1.1.1}, recently explicit and sharp criteria of Poincar\'{e} type (i.e., Poincar\'{e},
super Poincar\'{e} and weak Poincar\'{e}) inequalities have been
presented in \cite{WW} for the following general fractional Dirichlet form
\begin{equation}\label{sec1.1.4}
D_{\alpha,V}(f,f):=\iint  \frac{(f(y)-f(x))^2}{|y-x|^{d+\alpha}}\,dy\,
\mu_V(dx),\quad d\ge1, \alpha\in (0,2),
\end{equation} where $V$ is a Borel measurable function on $\R^d$ such that $e^{-V}\in L^1(dx)$, and $\mu_V(dx)=\frac{1}{\int\,e^{-V(x)}\,dx}\,e^{-V(x)}\,dx$.
According to the paragraph below
\cite[Remark 1.3]{MRS}, \eqref{sec1.1.4} is natural in the sense that:
 we should regard the measure
${|y-x|^{-(d+\alpha)}}\,dy$ as the L\'{e}vy measure, and
$\mu_V(dx)$ as the ambient measure. Namely, $D_{\alpha,V}$ does get rid of the constraint in $D_\alpha$, and it should be a typical example in study functional inequalities for non-local Dirichlet forms.

To move further, we briefly recall the results developed in
\cite{WW}. Let $e^{-V}\in L^1(dx)\cap C^2(\R^d)$ satisfying some
regular assumptions. \cite[Theorem 1.1 (1) and (2)]{WW} shows
that,\emph{ if $$
\liminf_{|x|\to\infty}\frac{e^{V(x)}}{|x|^{d+\alpha}}>0,$$ then there
is a constant $C_1>0$ such that for all $f\in
C_b^\infty(\R^d)$,
\begin{equation}\label{sec1.1.5}\mu_V\big( f-\mu_V(f)\big)^2\leqslant C_1 D_{\alpha,V}(f,f);\end{equation} if $$
\liminf_{|x|\to\infty}\frac{e^{V(x)}}{|x|^{d+\alpha}}=\infty,$$ then the following super-Poincar\'{e} inequality
$$\mu_V(f^2)\le r D_{\alpha,V}(f,f)+\beta(r)\mu_V(|f|)^2,\quad r>0$$
holds for some non-increasing function $\beta$ and all $f\in C_b^\infty(\R^d)$.}

\medskip

Note that the exponentially decaying factor $e^{-\delta |y-x|}$ and
the weighted function $1+|\nabla  V(x)|^\alpha$ in \eqref{sec1.1.3}
indicate that the functional inequality \eqref{sec1.1.3} is stronger
than the expected Poincar\'{e} inequality \eqref{sec1.1.5} for fractional
Dirichlet form $D_{\alpha,V}$, see \cite[Remark 1.4]{MRS}.
Therefore, the work of \cite{WW} does not extend \cite{MRS}, and
there still exists a gap between \cite{WW} and \cite{MRS}. That is just
the motivation of our present paper.

\subsection{Weighted Poincar\'{e} Inequalities for $D_{\alpha,V,\delta}$ with $\delta>0$: Improvement of the Work in \cite{MRS}}
 We  first introduce some notations.
 Let $V$ be a locally bounded measurable function on $\R^d$ such that $e^{-V}$ is bounded and $e^{-V}\in
L^1(dx)$. Define a probability measure $\mu_V$ as
follows
\begin{equation}\label{sec1.1.6}
\mu_V(dx)=\frac{1}{\int\,e^{-V(x)}\,dx}\,e^{-V(x)}\,dx.
\end{equation} For any $\delta \ge 0$ and $f\in C_b^\infty(\R^d)$, set
$$D_{\alpha,V,\delta}(f,f):=\iint
\frac{(f(y)-f(x))^2}{|y-x|^{d+\alpha}}e^{-\delta|y-x|}\,dy
\,\mu_V(dx).$$ In particular, when $\delta=0$, $D_{\alpha,V,\delta}=D_{\alpha,V}$.
We say that the weighted Poincar\'{e} inequality holds for $D_{\alpha,V,\delta}$, if there exist a positive weighted function
$\tilde \omega$ and a constant $C>0$ such that for all $f\in C_b^\infty(\R^d)$,
$$\int\big(f-\mu_V(f)\big)^2\tilde \omega\,d\mu_V\le CD_{\alpha,V,\delta}(f,f).$$

Even though it is known that in the context of local
Dirichlet forms some super Poincar\'{e} inequalities can imply
weighted Poincar\'{e} inequalities (e.g.\ see \cite{W3}), to the best of our knowledge there is no literature about such relation for
non-local Dirichlet form $D_{\alpha,V,\delta}$,
even for $D_{\alpha,V}$. Instead of
studying this topic, the purpose of this paper is to establish the
weighted Poincar\'{e} inequalities for non-local Dirichlet form
$D_{\alpha,V,\delta}$ directly.

\bigskip

The main result is as following,

\begin{theorem}\label{thm1.1} Suppose that for some constants $\delta > 0$, $\alpha\in(0,2)$
and $\alpha_0\in(0,1),$
\begin{equation}\label{thm1.1.1}\limsup_{|x|\to\infty}
\bigg[\bigg(\sup_{|z|\geqslant |x|}e^{-V(z)}\bigg)e^{\delta |x|}|x|^{d+\alpha-\alpha_0}\bigg]=0.
\end{equation}
Then, there exists a constant $C_1>0$ such that the following
weighted Poincar\'{e} inequality
\begin{equation}\label{thm1.1.2}
\aligned
\int \big(f (x)-\mu_V(f)\big)^2&
\frac{e^{V(x)-\delta|x|}}{(1+|x|)^{d+\alpha}}\,\mu_V(dx)\leqslant C_1D_{\alpha,V,\delta}(f,f)\endaligned
\end{equation}
holds for all $f\in C_b^\infty(\R^d)$.
\end{theorem}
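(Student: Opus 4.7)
The key structural observation is that the weight combines with the reference measure into a Lebesgue-absolutely continuous measure,
\[
\omega(x)\,\mu_V(dx) \;=\; \frac{C_V\, e^{-\delta|x|}}{(1+|x|)^{d+\alpha}}\,dx,
\]
where $\omega(x) := e^{V(x)-\delta|x|}(1+|x|)^{-(d+\alpha)}$ is the weight in \eqref{thm1.1.2}. Since the Dirichlet form $D_{\alpha,V,\delta}$ mixes $\mu_V$-integration in $x$ with Lebesgue integration in $y$, the strategy is to symmetrize the kernel and extract its ``Lebesgue half''.

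Assume $\mu_V(f)=0$. A change of variables $x\leftrightarrow y$ gives the symmetric representation $D_{\alpha,V,\delta}(f,f) = \tfrac{C_V}{2}\iint(f(y)-f(x))^2 \frac{e^{-\delta|y-x|}}{|y-x|^{d+\alpha}}(e^{-V(x)}+e^{-V(y)})\,dx\,dy$. Restricting to $y\in B_0:=B(0,1)$ (where $e^{-V(y)}\geq\inf_{B_0}e^{-V}>0$, since $V$ is locally bounded) and to long jumps $|y-x|\geq 1$, applying $(f(x)-f(y))^2\geq\tfrac12 f(x)^2-f(y)^2$, and using the pointwise bound $\int_{B_0}\frac{e^{-\delta|y-x|}}{|y-x|^{d+\alpha}}\,dy\geq c\,e^{-\delta|x|}(1+|x|)^{-(d+\alpha)}$ for $|x|\geq 2$, together with the finiteness $\int_{|y-x|\geq 1}\frac{e^{-\delta|y-x|}}{|y-x|^{d+\alpha}}\,dx\leq K<\infty$ (which holds because $\delta>0$), yields the tail estimate
\[
\int_{|x|\geq 2} f^2\,\omega\,d\mu_V \;\leq\; C_1\, D_{\alpha,V,\delta}(f,f) + C_2\int_{B_0}f^2\,dy
\]
with absolute constants $C_1,C_2$.

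The interior mass $\int_{B_0}f^2\,dy$ is controlled by enlarging to $B:=B(0,R)$ with $R\geq 2$ and invoking the classical fractional Poincar\'e inequality $\int_B(f-c)^2\,dy\leq C_P\iint_{B\times B}\frac{(f(y)-f(x))^2}{|y-x|^{d+\alpha}}\,dx\,dy$ for any constant $c$. Since on the compact set $B\times B$ both $e^{-\delta|y-x|}$ and $e^{-V(x)}$ are bounded below, the right-hand side is at most $C_3(R)\,D_{\alpha,V,\delta}(f,f)$. Taking $c=f_B:=\mu_V(B)^{-1}\int_B f\,d\mu_V$ (the $\mu_V$-average), one writes $\int_B f^2\,dy\leq 2\int_B(f-f_B)^2\,dy+2|B|f_B^2$. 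The $\mu_V(f)=0$ hypothesis together with Cauchy--Schwarz delivers
\[
|f_B|^2 \;\leq\; \mu_V(B)^{-2}\Bigl(\int_{B^c}\omega^{-1}\,d\mu_V\Bigr)\Bigl(\int_{B^c}f^2\,\omega\,d\mu_V\Bigr),
\]
and combining with the tail estimate (since $B^c\subseteq\{|x|\geq 2\}$ and $B_0\subseteq B$) produces a self-improving inequality $(1-\lambda(R)C_2')\int_B f^2\,dy\leq C_4(R)\,D_{\alpha,V,\delta}(f,f)$, where $\lambda(R):=|B|\mu_V(B)^{-2}\int_{B^c}\omega^{-1}\,d\mu_V$ and $C_2'$ is an explicit constant proportional to $C_2$.

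The principal obstacle, and the place where \eqref{thm1.1.1} enters decisively, is proving $\lambda(R)\to 0$ as $R\to\infty$, which allows the self-improvement to close. Writing \eqref{thm1.1.1} as $e^{-V(x)}\leq\eta(|x|)\,e^{-\delta|x|}(1+|x|)^{-(d+\alpha-\alpha_0)}$ with $\eta(r)\downarrow 0$, the integrand of $\int_{B^c}\omega^{-1}\,d\mu_V$ is dominated by $C_V\eta(|x|)^2\,e^{-\delta|x|}(1+|x|)^{2\alpha_0-(d+\alpha)}$; the exponential $e^{-\delta R}$ defeats both the volume growth $|B|\sim R^d$ and any polynomial term arising from $\alpha_0\in(0,1)$, while $\eta(R)\downarrow 0$ supplies the uniform smallness, so $\lambda(R)\to 0$. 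Fixing $R$ large enough that $\lambda(R)C_2'<1/2$ gives $\int_B f^2\,dy\leq C\,D_{\alpha,V,\delta}(f,f)$; back-substituting into the tail estimate and noting that $\omega$ is bounded on $\{|x|<2\}$, so that $\int_{|x|<2}f^2\,\omega\,d\mu_V\leq C'\int_{|x|<2}f^2\,dy$ is controlled as well, completes the proof of \eqref{thm1.1.2}.
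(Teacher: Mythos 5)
Your argument is correct in substance, but it follows a genuinely different route from the paper. The paper derives Theorem \ref{thm1.1} from a general Lyapunov-type criterion: it passes to the truncated form $\hat D_{\rho,V}$ (jumps of size $>1$ only), verifies the drift condition $\hat L_{\rho,V}\phi\leqslant -C_1e^{V}\gamma(|\cdot|)\phi+C_2\I_{B(0,r_0)}$ for the explicit Lyapunov function $\phi(x)=1+|x|^{\alpha_0}$ (Lemma \ref{le4.3}, where $\alpha_0<1$ is needed for the subadditivity $|x+z|^{\alpha_0}\leqslant|x|^{\alpha_0}+|z|^{\alpha_0}$), and then converts this into the weighted inequality via Proposition \ref{pr3.1}, the local Poincar\'e inequality of Lemma \ref{le3.1}, and the mean-zero Cauchy--Schwarz absorption step in the proof of Theorem \ref{th3.1}. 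You instead extract the weight directly: symmetrizing the kernel, keeping only jumps landing in a fixed ball $B_0$, and using $(a-b)^2\geqslant\tfrac12 a^2-b^2$ gives $\int_{\{|x|\geqslant2\}}f^2\omega\,d\mu_V\lesssim D_{\alpha,V,\delta}(f,f)+\int_{B_0}f^2\,dy$, after which you close the loop with a local fractional Poincar\'e inequality and the same Cauchy--Schwarz absorption, made possible because $\int_{B(0,R)^c}\omega^{-1}\,d\mu_V\to0$, which under \eqref{thm1.1.1} is exactly the paper's integrability condition \eqref{th4.1.2}. Your route is more elementary and self-contained, and it shows that for this particular kernel the restriction $\alpha_0<1$ plays no role, since $\alpha_0$ enters only through the integrability of $\omega^{-1}$ against $\mu_V$, which the factor $e^{-\delta|x|}$ guarantees for any polynomial power; what it does not give is the general criterion of Theorem \ref{th4.1} for other kernels $\rho$. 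One small correction: the fractional Poincar\'e inequality $\int_B(f-c)^2\,dy\leqslant C_P\iint_{B\times B}\frac{(f(y)-f(x))^2}{|y-x|^{d+\alpha}}\,dx\,dy$ does not hold ``for any constant $c$'' (take $f\equiv0$ and $c\neq0$); it holds for the Lebesgue average, and the version you actually invoke, with $c=f_B$ the $\mu_V$-average over $B$, follows from writing $f(x)-f_B=\mu_V(B)^{-1}\int_B(f(x)-f(y))\,\mu_V(dy)$ and Cauchy--Schwarz, exactly as in Lemma \ref{le3.1}, so the step is sound once restated.
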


\ \

To see that Theorem \ref{thm1.1} improves \cite[Theorem
1.2]{MRS}, we consider the following example.

\begin{example}\label{exa1.2} (1)
For $\delta>0$, let $V(x)=\varepsilon(1+|x|^2)^{1/2}$ with some
$\varepsilon>\delta$. Then, \eqref{thm1.1.1} is fulfilled, and so
the corresponding weighted Poincar\'{e} inequality \eqref{thm1.1.2}
holds. Note that, \eqref{sec1.1.2} is not satisfied for
$V(x)=\varepsilon(1+|x|^2)^{1/2}$ with any $\varepsilon>0$.

(2) Let $V(x)=1+|x|^2$. Then, Theorem \ref{thm1.1} implies that
for any $\delta>0$ there exists a constant $c_1>0$ such that for all
$f\in C_b^\infty(\R^d)$ with $\int f(x)e^{-V(x)}\,dx=0$,
\begin{equation*}\aligned\int f^2(x) \exp\Big(\frac{1}{2}{\big(1+|x|^2\big)}\Big) &e^{-V(x)}\,dx
\le c_1\iint
\frac{(f(y)-f(x))^2}{|y-x|^{d+\alpha}}e^{-\delta|y-x|}\,dy
e^{-V(x)}\,dx.\endaligned\end{equation*} However, in this setting
\cite[Theorem 1.2]{MRS} only implies that there exist two constants
$\delta$, $c_0>0$ such that for all $f\in C_b^\infty(\R^d)$ with
$\int f(x)e^{-V(x)}\,dx=0$,
\begin{equation*}\aligned\int f^2(x) \big(1+|x|^{\alpha}\big) &e^{-V(x)}\,dx
\le c_0\iint
\frac{(f(y)-f(x))^2}{|y-x|^{d+\alpha}}e^{-\delta|y-x|}\,dy
e^{-V(x)}\,dx.\endaligned\end{equation*}
\end{example}

\bigskip

As a direct consequence of Theorem \ref{thm1.1}, we know that for
$\delta > 0$ and $\alpha\in(0,2)$, if
\begin{equation}\label{sec1.1.9}\liminf_{|x|\to\infty}\frac{e^{V(x)}}{|x|^{d+\alpha}e^{\delta|x|}}>0,\end{equation}
then \eqref{thm1.1.2} holds, which implies the standard Poincar\'{e}
inequality: \begin{equation}\label{sec1.1.10} \aligned \mu_V \big(f
-\mu_V(f)\big)^2 & \leqslant
C_2D_{\alpha,V,\delta}(f,f)\quad\textrm{ for all }f\in
C_b^\infty(\R^d).\endaligned
\end{equation}
In order to show that \eqref{sec1.1.9} is qualitatively
sharp, we will study the concentration of measure for the Poincar\'{e}
inequality \eqref{sec1.1.10} of $D_{\alpha,V,\delta}$ with $\delta>0$.

\begin{proposition}\label{prop1.3} Let $\delta>0$ and $0<\alpha<2$, and let $\mu_V$ be a probability measure defined by \eqref{sec1.1.6}. Suppose that there is a
constant $C_2>0$ such that
the Poincar\'{e} inequality \eqref{sec1.1.10} holds for such $\mu_V$ and $D_{\alpha,V,\delta}$. Then, there is a constant
$\lambda_0>0$ such that $$\int e^{\lambda_0 |x|}\mu_V(dx)<\infty.$$
\end{proposition}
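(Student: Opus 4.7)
The plan is to adapt the classical Bobkov-Ledoux/Aida-Masuda-Shigekawa argument for deriving exponential concentration from a Poincar\'e inequality to the present non-local setting. Set $\phi_n(x):=|x|\wedge n$, a bounded 1-Lipschitz function, and $g_n(x):=e^{\lambda\phi_n(x)/2}$, so that $\mu_V(g_n^2)\leq e^{\lambda n}<\infty$. Using $|e^a-e^b|\leq|a-b|\max(e^a,e^b)$ together with $\max(\phi_n(x),\phi_n(y))\leq\phi_n(x)+|y-x|$, we get the pointwise bound
\begin{equation*}
(g_n(y)-g_n(x))^2\leq\tfrac{\lambda^2}{4}|y-x|^2\, g_n(x)^2 e^{\lambda|y-x|},
\end{equation*}
and hence, after applying Fubini to evaluate the $y$-integral,
\begin{equation*}
D_{\alpha,V,\delta}(g_n,g_n)\leq\tfrac{\lambda^2}{4}\mu_V(g_n^2)\int_{\Rd}|z|^{2-d-\alpha}e^{-(\delta-\lambda)|z|}\,dz=:\lambda^2 c(\lambda)\,\mu_V(g_n^2).
\end{equation*}
The integral is finite for $0<\lambda<\delta$ (convergence at $z=0$ uses $\alpha<2$, and at infinity uses $\lambda<\delta$), and $c(\lambda)$ is uniformly bounded on $(0,\delta/2]$. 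This is the step where the exponential decay of the jump kernel absorbs the exponential growth of the test function, and it is where the hypothesis $\delta>0$ is essential.

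Plugging this into the assumed Poincar\'e inequality \eqref{sec1.1.10} and restricting to $\lambda$ small enough that $\varepsilon(\lambda):=C_2\lambda^2c(\lambda)<1/2$ gives
\begin{equation*}
\mu_V(e^{\lambda\phi_n})\leq\frac{\mu_V(e^{\lambda\phi_n/2})^2}{1-\varepsilon(\lambda)}.
\end{equation*}
Taking logarithms and iterating $K$ times with $\lambda$ successively halved, the accumulated error telescopes into an absolutely convergent series of the form $\sum_{k\geq0}2^{k+1}\varepsilon(\lambda/2^k)=O(\lambda^2)$ (since $\varepsilon(\lambda/2^k)\lesssim(\lambda/2^k)^2$), while the leading term satisfies $2^K\log\mu_V(e^{\lambda\phi_n/2^K})\to\lambda\mu_V(\phi_n)$ as $K\to\infty$ (valid for fixed $n$ since $\phi_n$ is bounded). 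Consequently, there is a constant $C>0$ independent of $n$ such that
\begin{equation*}
\mu_V(e^{\lambda\phi_n})\leq\exp\bigl(\lambda\mu_V(\phi_n)+C\lambda^2\bigr).
\end{equation*}

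It remains to bound $\mu_V(\phi_n)$ uniformly in $n$, which is the main subtlety. Applying Poincar\'e directly to the 1-Lipschitz test function $\phi_n$ and using $|\phi_n(y)-\phi_n(x)|\leq|y-x|$, we obtain $D_{\alpha,V,\delta}(\phi_n,\phi_n)\leq\int_{\Rd}|z|^{2-d-\alpha}e^{-\delta|z|}\,dz=:K_0<\infty$, so that $\var_{\mu_V}(\phi_n)\leq C_2K_0$ uniformly in $n$. If $\mu_V(\phi_n)\to\infty$ along some subsequence, Chebyshev's inequality would force $\mu_V\bigl(|x|\geq\mu_V(\phi_n)-\sqrt{2C_2K_0}\bigr)\geq 1/2$ for all large $n$ in the subsequence, contradicting $\mu_V(|x|>R)\to0$ as $R\to\infty$ for the probability measure $\mu_V$. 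Hence $M:=\sup_n\mu_V(\phi_n)<\infty$, and combining with the bound from the previous paragraph gives $\mu_V(e^{\lambda\phi_n})\leq e^{\lambda M+C\lambda^2}$ uniformly in $n$. Monotone convergence in $n$ finally yields $\int_{\Rd}e^{\lambda|x|}\,\mu_V(dx)\leq e^{\lambda M+C\lambda^2}<\infty$ for any sufficiently small $\lambda>0$, which proves the proposition.
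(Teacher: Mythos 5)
Your proof is correct, but it follows a genuinely different route from the paper's. For the energy bound you use the single uniform pointwise estimate $(g_n(y)-g_n(x))^2\le\tfrac{\lambda^2}{4}|y-x|^2g_n(x)^2e^{\lambda|y-x|}$, so the kernel's factor $e^{-\delta|y-x|}$ absorbs the growth for jumps of all sizes at once and you get $D_{\alpha,V,\delta}(g_n,g_n)\le\lambda^2c(\lambda)\mu_V(g_n^2)$ with an explicit $\lambda^2$ in front; the paper instead splits into $\{|x-y|\le N\}$ and $\{|x-y|>N\}$, where the far part only carries a factor $k(N)$ (no $\lambda^2$), and must take $N$ large to make $k(N)$ small. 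For the mean term the divergence is larger: the paper bounds $\mu_V(g_n)^2\le 2e^{2\lambda R}+2p(R)\,l_n(\lambda)$ by Cauchy--Schwarz on $\{|x|\le R\}\cup\{|x|>R\}$ and uses tightness ($p(R)\to0$) to absorb it into the left-hand side, giving the uniform bound $l_n(\lambda_0)\le 8e^{2\lambda_0R_0}$ directly; you instead run the classical Aida--Masuda--Shigekawa/Bobkov--Ledoux dyadic iteration in $\lambda$ and separately control $\sup_n\mu_V(\phi_n)$ via the variance bound for the $1$-Lipschitz function $\phi_n$ plus Chebyshev. Your route yields the sharper quantitative output $\mu_V(e^{\lambda|x|})\le e^{\lambda M+C\lambda^2}$ (a Gaussian-type bound on the log-Laplace transform), at the cost of the iteration machinery; the paper's argument is more elementary and self-contained. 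One point you should make explicit, as the paper does by appealing to the approximation procedure in the proof of Proposition \ref{pr3.1}: the Poincar\'e inequality \eqref{sec1.1.10} is stated for $f\in C_b^\infty(\R^d)$, so its application to the bounded Lipschitz functions $g_n$ and $\phi_n$ requires a (routine) smoothing and dominated-convergence step.
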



\subsection{Weighted Poincar\'{e} Inequalities for $D_{\alpha,V}$: Completeness of the Work in \cite{WW}}
Let $D_{\alpha,V}$ be the bilinear form defined by (\ref{sec1.1.4}). We have the following result.
\begin{theorem}\label{thm1.4} 
Let $\alpha\in (0,2)$. If for some constant
$\alpha_0\in (0,\alpha/2)$,
\begin{equation}\label{thm1.4.1}
\limsup_{|x|\to\infty}\bigg[
\bigg(\sup_{|z|\geqslant |x|}e^{-V(z)}\bigg)|x|^{d+\alpha-\alpha_0}\bigg]=0,
\end{equation} then there exists a constant $C_2>0$ such that the following
weighted Poincar\'{e} inequality
\begin{equation}\label{thm1.4.2}\aligned
\int \big(f (x)-\mu_V(f)\big)^2
\frac{e^{V(x)}}{(1+|x|)^{d+\alpha}}\,\mu_V(dx)\le C_2 D_{\alpha,V}(f,f)\endaligned
\end{equation}
holds for all $f\in C_b^\infty(\R^d)$.

The weighted function in the weighted Poincar\'{e} inequality \eqref{thm1.4.2} is
$$\omega(x):=\frac{e^{V(x)}}{(1+|x|)^{(d+\alpha)}}.$$
This function is optimal in the sense that, the inequality
\eqref{thm1.4.2} fails if we replace $\omega(x)$ by a positive function $\omega^*(x)$,
which satisfies that
$$\liminf_{|x|\to\infty}\frac{\omega^*(x)}{\omega(x)}=\infty.$$
\end{theorem}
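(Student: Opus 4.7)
The strategy is to mirror the proof of Theorem~\ref{thm1.1} in the critical case $\delta=0$. The driving structural observation is that the weight $\omega(x)=e^{V(x)}/(1+|x|)^{d+\alpha}$ is calibrated so that
\[
 \omega(x)\,\mu_V(dx)\;=\;C_V(1+|x|)^{-(d+\alpha)}\,dx,
\]
whose polynomial tail matches exactly that of the jump kernel $|y-x|^{-(d+\alpha)}$. Consequently, a \emph{long-jump} estimate can play the role that the exponential factor $e^{-\delta|y-x|}$ played in Theorem~\ref{thm1.1}.

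I would prove \eqref{thm1.4.2} in three steps. \emph{(i)~Local part.} On any fixed ball $B_R$, local boundedness of $V$ gives $e^{-V}$ bounded above and below, so $\omega\asymp 1$ and $\mu_V$ is comparable to Lebesgue measure on $B_R$. The standard non-local Poincar\'e inequality for jumps of length $\leqslant r$ with $r>2R$ yields $\int_{B_R}(f-f_R)^2\omega\,d\mu_V\leqslant C_R\,D_{\alpha,V}(f,f)$, with $f_R:=\mu_V(B_R)^{-1}\int_{B_R}f\,d\mu_V$. \emph{(ii)~Long-jump tail.} For $x\in B_{R/2}$ and $|y|\geqslant 2R$ one has $|y-x|\asymp|y|$, so symmetrising the kernel and using $(a-b)^2\geqslant\tfrac12a^2-b^2$ gives
\[
 D_{\alpha,V}(f,f)\;\geqslant\;c\,\mu_V(B_{R/2})\!\int_{|y|\geqslant 2R}\!\frac{f(y)^2}{(1+|y|)^{d+\alpha}}\,dy\;-\;C\!\int_{B_R}(f-f_R)^2\,d\mu_V\;-\;C f_R^2,
\]
and the last two error terms are absorbed via step~(i). \emph{(iii)~Mean adjustment.} Assuming $\mu_V(f)=0$ WLOG, the identity $f_R=-\mu_V(B_R)^{-1}\int_{B_R^c}f\,d\mu_V$ and Cauchy--Schwarz give
\[
 f_R^2\;\leqslant\;\mu_V(B_R)^{-2}\Big(\!\int_{B_R^c}\!f^2\omega\,d\mu_V\Big)\Big(\!\int_{B_R^c}\!\omega^{-1}\,d\mu_V\Big),
\]
where $\omega^{-1}\,d\mu_V=C_V(1+|x|)^{d+\alpha}e^{-2V(x)}\,dx$. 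The constraint $\alpha_0<\alpha/2$ in \eqref{thm1.4.1} is precisely what makes this last tail integral finite and vanishing as $R\to\infty$; taking $R$ large then lets one absorb the mean-adjustment into the already-controlled $\int f^2\omega\,d\mu_V$ and close the estimate. I expect step~(iii) to be the main technical obstacle: the chain of constants in (i)--(ii)--(iii) must be tracked carefully so that no circular dependence on $\int f^2\omega\,d\mu_V$ itself remains.

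For the optimality, suppose \eqref{thm1.4.2} held with some $\omega^*$ satisfying $\omega^*/\omega\to\infty$ at infinity. Pick $|x_n|\to\infty$ and test against $f_n$ a smoothed indicator of $B(x_n,r_n)$ with scale $r_n=(1+|x_n|)^\theta$ and $\theta\in(\alpha_0/\alpha,1)$, which exists because $\alpha_0<\alpha/2<\alpha$. A direct computation using the boundary integral of the kernel gives $D_{\alpha,V}(f_n,f_n)\asymp r_n^{-\alpha}\mu_V(B(x_n,r_n))$, while $\int(f_n-\mu_V(f_n))^2\omega^*\,d\mu_V\gtrsim\langle\omega^*\rangle_{B(x_n,r_n)}\mu_V(B(x_n,r_n))$, so the ratio exceeds $c\,r_n^\alpha\langle\omega^*\rangle_{B(x_n,r_n)}$. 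Since $r_n\ll|x_n|$, the bound $\omega^*/\omega\geqslant M_n$ holds uniformly on $B(x_n,r_n)$ with $M_n\to\infty$, and combined with the pointwise lower bound $\omega(x)\gtrsim(1+|x|)^{-\alpha_0}$ implied by \eqref{thm1.4.1} this forces the ratio to be $\gtrsim M_n(1+|x_n|)^{\alpha\theta-\alpha_0}\to\infty$, contradicting \eqref{thm1.4.2}. Hence no weight whose ratio to $\omega$ blows up at infinity can replace $\omega$.
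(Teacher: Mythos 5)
Your first part --- the inequality \eqref{thm1.4.2} itself --- is a correct but genuinely different route. The paper never splits near/far jumps by hand: it applies the general Lyapunov machinery of Sections \ref{sec3}--\ref{sec4} (Proposition \ref{pr3.1}, Theorem \ref{th3.1}) to the \emph{truncated} generator $\hat L_{\rho,V}$ with the Lyapunov function $\phi(x)=1+|x|^{\alpha_0}$, obtaining $\hat L_{\rho,V}\phi\leqslant -c\,e^{V(x)}(1+|x|)^{-(d+\alpha)}\phi+C\I_{B(0,r_0)}$ (Lemma \ref{le4.3}) and hence the weight $h\phi^{-1}=c\,e^{V}(1+|x|)^{-(d+\alpha)}$. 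Your long-jump estimate in step (ii) encodes exactly the same information as that drift inequality (both amount to: the jump intensity between the bulk of $\mu_V$ and a far point $y$ is comparable to $(1+|y|)^{-(d+\alpha)}$), and your step (iii) is literally the absorption argument in the proof of Theorem \ref{th3.1}: the same Cauchy--Schwarz with $\omega$ and $\omega^{-1}$, the same tail integral $\int(1+|x|)^{d+\alpha}e^{-2V}\,dx$, which is where $\alpha_0<\alpha/2$ enters in both proofs. Your version is more elementary and self-contained; the paper's buys generality (Theorem \ref{th4.1} covers arbitrary $\rho$, including the $e^{-\delta r}$ case of Theorem \ref{thm1.1}, with one computation). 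The constant-tracking you worry about does close, for the reason you implicitly use: the measures $\omega\,d\mu_V$ and $d\mu_V$ both have total mass bounded independently of $R$, so the coefficient multiplying $f_R^2$ stays bounded while $\int_{B_R^c}\omega^{-1}\,d\mu_V\to0$.

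The optimality argument, however, contains a genuine error. Your key asymptotic $D_{\alpha,V}(f_n,f_n)\asymp r_n^{-\alpha}\mu_V(B(x_n,r_n))$ is false: since the reference measure in $D_{\alpha,V}$ is $dy\,\mu_V(dx)$ and almost all of the mass of $\mu_V$ sits in a fixed ball around the origin, the jumps between that bulk and the distant bump contribute
$$\int_{B(0,R_0)}\int_{B(x_n,2r_n)}\frac{dy}{|y-x|^{d+\alpha}}\,\mu_V(dx)\;\asymp\;r_n^{d}\,|x_n|^{-(d+\alpha)},$$
and with your choice $r_n=(1+|x_n|)^{\theta}$, $\theta>\alpha_0/\alpha$, this term \emph{dominates} $r_n^{-\alpha}\mu_V(B(x_n,2r_n))\lesssim r_n^{d-\alpha}|x_n|^{-(d+\alpha)+\alpha_0}$. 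Consequently the claimed lower bound for the ratio, $c\,M_n(1+|x_n|)^{\alpha\theta-\alpha_0}$, does not follow; the true ratio for these test functions is only of order $M_n$ (take $\omega^*=\omega\log(e+|x|)$ to see the discrepancy). The construction is repairable --- the far-field term $r_n^d|x_n|^{-(d+\alpha)}$ happens to be comparable to $\int_{B(x_n,r_n)}\omega\,d\mu_V$, so one still gets ratio $\gtrsim M_n\to\infty$ --- but the paper avoids the issue entirely by testing with smoothed indicators of $B(0,2n)^c$ (equal to $0$ on $B(0,n)$, to $1$ outside $B(0,2n)$, $|\nabla f_n|\leqslant 2/n$), for which $D_{\alpha,V}(f_n,f_n)\leqslant c_1n^{-\alpha}$ uniformly while $\int_{\{|x|\geqslant 2n\}}\omega\,d\mu_V\asymp n^{-\alpha}$, giving the contradiction in two lines.
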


Theorem \ref{thm1.4} can be seen as a complement of \cite[Theorem
1.1]{WW}, where explicit criteria are presented for fractional
Dirichlet form $D_{\alpha,V}$ to satisfy Poincar\'{e}, super
Poincar\'{e} and weak Poincar\'{e} inequalities.

\bigskip

We first mention
that the weighted Poincar\'{e} inequality \eqref{thm1.4.2} can be
satisfied for some probability measures, which do not fulfill the
true Poincar\'{e} inequality.

\begin{example}\label{exm1.1.5} For $\varepsilon>0$, let $\mu_{\varepsilon}(dx)={C_{\varepsilon}}{(1+|x|^2)^{-(d+\varepsilon)/2}}\,dx$ be a probability measure, where
$C_{\varepsilon}$ is a normalizing constant. According to Theorem
\ref{thm1.4}, if $\varepsilon>\alpha/2$, there exists a constant
$c_3>0$ such that the following weighted Poincar\'{e} inequality
$$\aligned \int \big(f (x)-\mu_\varepsilon(f)\big)^2
\frac{1}{(1+|x|)^{\alpha-\varepsilon}}&\,\mu_\varepsilon(dx)\le c_3\iint \frac{(f(y)-f(x))^2}{|y-x|^{d+\alpha}}\,dy
\,\mu_\varepsilon(dx)\endaligned$$ holds for all $f\in
C_b^\infty(\R^d)$. However, by \cite[Corollary 1.2 (1)]{WW}, we know
that the following Poincar\'{e} inequality $$\aligned \int
\!\!\big(f (x)-
\mu_\varepsilon(f)\big)^2&\,\mu_\varepsilon(dx)\!\le
c_4\!\!\iint \frac{(f(y)-f(x))^2}{|y-x|^{d+\alpha}}\,dy
\,\mu_\varepsilon(dx),\quad f\in C_b^\infty(\R^d) \endaligned$$ does
not hold for any $\varepsilon\in(\alpha/2,\alpha).$ \end{example}

\medskip

The next result shows that the weighted Poincar\'{e} inequality for $D_{\alpha,V}$ with continuous
weighted function, which tends to infinite when $|x|$ tends to infinite, indeed implies the super Poincar\'{e} inequality.
For any $r>0$, define \begin{equation*}
h(r):=\inf_{|x| \le r}e^{V(x)},\quad H(r):=\sup_{|x| \le r}e^{V(x)}.
\end{equation*}
\begin{proposition}\label{prop1.6.0}
Let $\mu_V$ be a probability measure given by \eqref{sec1.1.6}, and $\omega$ be a positive continuous function on $\R^d$ such that $\lim_{|x|\rightarrow \infty} \omega(x)=\infty$. Suppose that there is a constant $C_0>0$ such that
the following weighted Poincar\'{e} inequality holds
\begin{equation}\label{e1}
\begin{split}
\int \big(f(x)-\mu_V(f)\big)^2 \omega(x) \,\mu_V(dx)\leqslant C_0 D_{\alpha,V}(f,f),\quad f \in C_b^{\infty}(\R^d).
\end{split}
\end{equation}
Then the following super Poincar\'{e} inequality
\begin{equation}\label{e2}
\mu_V(f^2)\le r D_{\alpha,V}(f,f)+\beta(r)\mu_V(|f|)^2,\quad r>0,\,\ f\in C_b^{\infty}(\R^d)
\end{equation}
holds with
$$\beta(r)=\inf\bigg\{{C_1H(t)^{2+{d}/{\alpha}}}
{h(t)^{-1-{d}/{\alpha}}}\big(1+s^{-{d}/{\alpha}}\big):\,\,\frac{2C_0}{\inf\limits_{|x|\ge t}\omega(x)}+s\le r, t>1, s>0  \bigg\}.$$
In particular, there are $r_0>0$ small enough and a constant $C_2>0$ such that for all $0<r\le r_0$,
\begin{equation*}
\beta(r)\le C_2\Big(1+r^{-{d}/{\alpha}}\big(h\circ \kappa(4C_0r^{-1})\big)^{-1-{d}/{\alpha}}
\big(H\circ \kappa(4C_0r^{-1})\big)^{2+{d}/{\alpha}}\Big),
\end{equation*} where
\begin{equation*}
\kappa(r):=\inf\big\{s>0:\ \inf_{|x|\ge s}\omega(x) \ge r\big\}.
\end{equation*}
\end{proposition}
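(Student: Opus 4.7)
The plan is to split $\mu_V(f^2)$ according to a ball $B(0,t)$: use the weighted Poincar\'e inequality \eqref{e1} to control the mass in the far region $\{|x|>t\}$ (where $\omega$ is bounded below by $\inf_{|z|\ge t}\omega(z)$), and a local Nash-type inequality on $B(0,t)$, obtained after passing from $\mu_V$ to Lebesgue measure via the bounds $h(t)$ and $H(t)$, to control the near region. The parameters $t$ and $s$ are then optimized so that the total energy coefficient does not exceed $r$.

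In more detail, I first write $\mu_V(f^2)\le 2\mu_V((f-\mu_V(f))^2)+2\mu_V(|f|)^2$ and decompose the first term as $I_1+I_2$, with $I_1$ the integral over $\{|x|>t\}$ and $I_2$ over $\{|x|\le t\}$. Since $\omega(x)\ge \inf_{|z|\ge t}\omega(z)$ on $\{|x|>t\}$, \eqref{e1} yields
$$I_1\le \frac{C_0}{\inf_{|z|\ge t}\omega(z)}\,D_{\alpha,V}(f,f).$$
For $I_2$ I use $(f-\mu_V(f))^2\le 2f^2+2\mu_V(|f|)^2$ to reduce to estimating $\int_{B(0,t)}f^2\,d\mu_V$; since $e^{-V(x)}\le 1/h(t)$ on $B(0,t)$, with $Z=\int e^{-V(y)}\,dy$ I obtain $\int_{B(0,t)}f^2\,d\mu_V\le (Zh(t))^{-1}\int_{B(0,t)}f^2\,dx$.

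The crucial input is the local Nash-type inequality on $B(0,t)$: there is $C_\alpha>0$ such that for all $s'>0$ and $t\ge 1$,
$$\int_{B(0,t)}f^2\,dx\le s'\iint_{B(0,t)\times B(0,t)}\frac{(f(y)-f(x))^2}{|y-x|^{d+\alpha}}\,dy\,dx+C_\alpha\bigl(1+(s')^{-d/\alpha}\bigr)\bigg(\int_{B(0,t)}|f|\,dx\bigg)^{\!2},$$
which follows from Nash's inequality for the fractional form on $\R^d$ combined with the trivial Poincar\'e inequality on the ball (the latter producing the $+1$). To return to $D_{\alpha,V}$, I symmetrize in $(x,y)$: since $e^{-V(x)}+e^{-V(y)}\ge 2/H(t)$ on $B(0,t)\times B(0,t)$,
$$\iint_{B(0,t)\times B(0,t)}\frac{(f(y)-f(x))^2}{|y-x|^{d+\alpha}}\,dy\,dx\le ZH(t)\,D_{\alpha,V}(f,f),$$
while $\int_{B(0,t)}|f|\,dx\le ZH(t)\,\mu_V(|f|)$. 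Collecting everything and setting $s=4s'H(t)/h(t)$ produces
$$\mu_V(f^2)\le \bigg(\frac{2C_0}{\inf_{|z|\ge t}\omega(z)}+s\bigg)D_{\alpha,V}(f,f)+C_1\,\frac{H(t)^{2+d/\alpha}}{h(t)^{1+d/\alpha}}\bigl(1+s^{-d/\alpha}\bigr)\mu_V(|f|)^2,$$
which is exactly \eqref{e2} with the announced $\beta(r)$ once $t>1,s>0$ are chosen under the constraint $2C_0/\inf_{|z|\ge t}\omega(z)+s\le r$.

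For the quantitative form at small $r$, I take $t=\kappa(4C_0/r)$, so that $\inf_{|z|\ge t}\omega(z)\ge 4C_0/r$ and hence $2C_0/\inf\omega\le r/2$, together with $s=r/2$; the requirement $t>1$ holds once $r\le r_0$ for some small $r_0>0$, because the hypothesis $\omega(x)\to\infty$ forces $\kappa(u)\to\infty$ as $u\to\infty$. For such small $r$, the constant $1$ inside $1+s^{-d/\alpha}$ is absorbed into the dominant term $r^{-d/\alpha}H^{2+d/\alpha}h^{-1-d/\alpha}$, yielding the claimed bound. The main obstacle I foresee is justifying the local Nash-type inequality on $B(0,t)$ with a constant independent of $t$: although it is essentially classical, one must be careful when localizing the fractional form to $B\times B$, and the symmetrization trick above is precisely what allows us to avoid the boundary terms of the form $\int_B\int_{B^c}f(x)^2|x-y|^{-d-\alpha}\,dy\,dx$ that would otherwise pollute the comparison with $D_{\alpha,V}$.
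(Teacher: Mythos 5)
Your proposal is correct and follows essentially the same route as the paper: split $\mu_V(f^2)$ into the far region $\{|x|>t\}$, controlled by the weighted Poincar\'e inequality via $\omega\ge\inf_{|z|\ge t}\omega(z)$, and the near region $B(0,t)$, controlled by a local super Poincar\'e (Nash-type) inequality with constant $H(t)^{2+d/\alpha}h(t)^{-1-d/\alpha}(1+s^{-d/\alpha})$, then optimize $t,s$ and take $t=\kappa(4C_0/r)$, $s=r/2$ for the quantitative bound. (Your handling of the far region is marginally cleaner than the paper's, which first proves $\mu_V(\omega)<\infty$ with a test function; you avoid this by keeping $(f-\mu_V(f))^2$ under the weight.) The one soft spot is the local Nash inequality on $B(0,t)$ with the regional form $\iint_{B\times B}$: the paper simply cites \cite[Lemma 3.1]{WW} for the corresponding local super Poincar\'e inequality, and your suggested derivation (``Nash on $\R^d$ plus trivial Poincar\'e on the ball'') is not quite right as stated, since applying the whole-space Nash inequality to $f\I_{B}$ reintroduces exactly the cross terms $\int_B\int_{B^c}f(x)^2|x-y|^{-d-\alpha}\,dy\,dx$ you wish to avoid; the correct justification is the regional fractional Nash/Sobolev inequality on the unit ball (via an extension operator for $W^{\alpha/2,2}(B)$) followed by the scaling argument that makes the constant uniform in $t\ge1$.
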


Let $V(x)=\frac{d+\varepsilon}{2}\log(1+|x|^2)$ with $\varepsilon>\alpha$, and  $$\mu_V(dx)=C_V e^{-V(x)}\,dx=C_V{(1+|x|^2)^{-(d+\varepsilon)/2}}\ dx$$ be the corresponding probability measure. According to Proposition \ref{prop1.6.0} and the weighted Poincar\'{e} inequality obtained in Example \ref{exm1.1.5}, we know that the super Poincar\'{e} inequality \eqref{e2} holds for
such $\mu_V$ and $D_{\alpha,V}$ with $$\beta(r)=c\Big(1+r^{-\frac{d}{\alpha}-\frac{(d+\varepsilon)(2\alpha+d)}{\alpha(\varepsilon-\alpha)}}\Big).$$
This estimate for the rate function $\beta$ is exactly the same as that in \cite[Corollary 1.2 (2)]{WW}, which indicates that the estimate above is optimal. However, due to the non-local property, we do not know whether the super Poincar\'{e} inequality
implies the weighted Poincar\'{e} inequality for the Dirichlet form $D_{\alpha,V}$, although it is true for local Dirichlet forms, e.g.\ see \cite{W3}.

\medskip

There exist a lot of
works for weighted Poincar\'{e} type inequalities for local
Dirichlet forms, e.g.\ see \cite{BL, CGW}. The difference between
the main results in those cited papers and Theorem \ref{thm1.4} is
that, the weighted function of weighted Poincar\'{e} inequalities in
\cite[Theorem 3.1]{BL} and \cite[Theorem 2.1]{CGW} is inside the
associated Dirichlet form, but the weighted function of the
inequality \eqref{thm1.4.2} here appears in the variation term (i.e.\ the left
hand side of the inequality). The following proposition shows that the
weighted Poincar\'{e} inequality \eqref{thm1.4.2} implies more
information, which may indicate that weighted Poincar\'{e}
inequalities of the form \eqref{thm1.4.2} are more suitable to
study for non-local Dirichlet forms.

\begin{proposition}\label{prop1.6} Let the function $V$ satisfying
\begin{equation}\label{prop1.6.1}
\liminf_{|x|\rightarrow \infty}\frac{e^{V(x)}}{|x|^{d+\alpha}}>0,
\end{equation}
and let $\omega:\R^d\to \R_+$ be a continuous and positive function.
Then there exists a constant $C_2(\omega)>0$ such that the following
weighted Poincar\'{e} inequality
\begin{equation}\label{prop1.6.2}\aligned
\int \big(f (x)-\mu_V(f)\big)^2& \frac{e^{V(x)}}{(1+|x|)^{d+\alpha}}
\mu_V(dx)\\
&\le C_2(\omega)\int \omega(x) \int
\frac{(f(y)-f(x))^2}{|y-x|^{d+\alpha}}\,dy \,\mu_V(dx)\endaligned
\end{equation}
holds for all $f\in C_b^\infty(\R^d)$.
 \end{proposition}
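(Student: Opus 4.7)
The plan is to reduce Proposition~\ref{prop1.6} to Theorem~\ref{thm1.4} by absorbing the weight $\omega$ into the potential. I set $\tilde V := V - \log\omega$, so that $e^{-\tilde V}=\omega\,e^{-V}$ and the tilted probability measure $\mu_{\tilde V}$ is proportional to $\omega(x)e^{-V(x)}\,dx$. By monotonicity, I may first replace $\omega$ by $\min(\omega,1)$ and so assume $\omega\le 1$ without loss of generality: this shrinks the right-hand side of \eqref{prop1.6.2}, so the inequality for the smaller weight forces the one for the original $\omega$.

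Under $\omega\le 1$, the potential $\tilde V$ is locally bounded (since $V$ is and $\omega$ is continuous and strictly positive), and $e^{-\tilde V}\le e^{-V}$ is bounded and belongs to $L^1(dx)$. Hypothesis \eqref{prop1.6.1} combined with the standing boundedness of $e^{-V}$ supplies a constant $c>0$ with $e^{-V(x)}\le c(1+|x|)^{-(d+\alpha)}$ for every $x$; consequently $\sup_{|z|\ge|x|}e^{-\tilde V(z)}\le c|x|^{-(d+\alpha)}$ for $|x|$ large, so \eqref{thm1.4.1} holds for $\tilde V$ with any $\alpha_0\in(0,\alpha/2)$. Theorem~\ref{thm1.4} applied to $\tilde V$, together with the identities $\mu_{\tilde V}(dx)=(C_{\tilde V}/C_V)\,\omega(x)\,\mu_V(dx)$ and $e^{\tilde V(x)}\mu_{\tilde V}(dx)=C_{\tilde V}\,dx$, then yields
\begin{equation*}
\int\bigl(f-\mu_{\tilde V}(f)\bigr)^2\frac{dx}{(1+|x|)^{d+\alpha}}\le \frac{\tilde C}{C_V}\,A,\qquad A:=\int\omega(x)\int\frac{(f(y)-f(x))^2}{|y-x|^{d+\alpha}}\,dy\,\mu_V(dx),
\end{equation*}
which is already the inequality \eqref{prop1.6.2} up to the centering constant.

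To replace $\mu_{\tilde V}(f)$ by $\mu_V(f)$, I exploit that both sides of \eqref{prop1.6.2} depend on $f$ only up to an additive constant, and therefore assume $\mu_{\tilde V}(f)=0$. Then $(f-\mu_V(f))^2\le 2f^2+2\mu_V(f)^2$, while Cauchy--Schwarz for the probability measure $\mu_V$ gives $\mu_V(f)^2\le \mu_V(f^2)\le cC_V\int f^2(1+|x|)^{-(d+\alpha)}\,dx\le c\tilde C\,A$, using the displayed bound. Inserting these estimates into $C_V\int(f-\mu_V(f))^2(1+|x|)^{-(d+\alpha)}\,dx$, which equals the left-hand side of \eqref{prop1.6.2} after cancelling $e^{\pm V}$, yields \eqref{prop1.6.2} with $C_2(\omega)=2\tilde C(1+cC_V I)$, where $I:=\int(1+|x|)^{-(d+\alpha)}\,dx<\infty$.

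The only real subtlety is confirming that Theorem~\ref{thm1.4} applies to the tilted potential $\tilde V$ regardless of how quickly $\omega$ may decay. The normalisation $\omega\le 1$ is crucial here: it guarantees $e^{-\tilde V}\le e^{-V}$, so the tail decay supplied by \eqref{prop1.6.1} is inherited by $\tilde V$ with no growth assumption on $\omega$ whatsoever. Everything else is bookkeeping --- reconciling the two centerings $\mu_V(f)$ and $\mu_{\tilde V}(f)$, and tracking the normalising constants $C_V,C_{\tilde V}$.
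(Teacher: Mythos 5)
Your proof is correct, and while it shares the paper's central idea --- absorb the weight into the potential by setting $\tilde V=V-\log\omega$ and invoke Theorem \ref{thm1.4} for the tilted measure --- the way you dispose of the two technical obstructions is genuinely different and, in fact, cleaner. The paper first replaces $\omega$ by its radially decreasing minorant, then by a function $\omega^*_r$ equal to $1$ on $B(0,r)$ and to the minorant outside; it must then track that the normalising constant $C_{V^*}$ and all intermediate constants are uniform in $r$, and it closes the centering gap by showing $\big(\int f\,d\mu_{V^*}\big)^2\lesssim \mu_V(|x|>r)\int f^2\frac{e^{V}}{(1+|x|)^{d+\alpha}}\,d\mu_V$ and absorbing this small multiple of the left-hand side by taking $r$ large. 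You avoid the whole construction by normalising $\omega\le 1$ once (legitimate, since this only shrinks the right-hand side), which immediately gives $e^{-\tilde V}\le e^{-V}$ and hence \eqref{thm1.4.1} for $\tilde V$ from \eqref{prop1.6.1} alone; and you resolve the mismatch between the centerings $\mu_V(f)$ and $\mu_{\tilde V}(f)$ without any absorption, via the elementary chain $\mu_V(f)^2\le\mu_V(f^2)\le cC_V\int f^2(1+|x|)^{-(d+\alpha)}\,dx$ (using the global bound $e^{-V}\le c(1+|x|)^{-(d+\alpha)}$ supplied by \eqref{prop1.6.1} and the boundedness of $e^{-V}$) together with the finiteness of $\int(1+|x|)^{-(d+\alpha)}\,dx$. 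A pleasant side effect of your route is that the normalising constant $C_{\tilde V}$ cancels outright in the key display, so no uniform control of it is ever needed. The one point worth stating explicitly if you write this up is that $f-\mu_{\tilde V}(f)$ remains in $C_b^\infty(\R^d)$, so the reduction to $\mu_{\tilde V}(f)=0$ is compatible with the class of test functions in Theorem \ref{thm1.4}; this is immediate but is the hinge of the centering step.
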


Under (\ref{prop1.6.1}), the function $V$ satisfies \eqref{thm1.4.1}, which implies that the inequality
(\ref{thm1.4.2}) holds. According to (\ref{prop1.6.2}), in this
situation we can improve the inequality (\ref{thm1.4.2}) by adding a
weighted function $\omega$, which may tend to $0$ in any rate as
$|x| \rightarrow \infty$, inside the non-local Dirichlet form
$D_{\alpha,V}$.{\footnote{To deduce \eqref{prop1.6.2} from
\eqref{thm1.4.2}, one may take
$C_2(\omega)=\sup_{x\in\R^d}\omega(x)^{-1}.$ However, as mentioned
above the weighted function $\omega$ may tend to $0$ as $|x|
\rightarrow \infty$, and so in this case $C_2(\omega)$ is infinite, which does not work.
We will see below that the proof of Proposition \ref{prop1.6} is not
trivial.}} However, in the context of local Dirichlet forms, to
obtain such weighted Poincar\'{e} inequality we need to put some
restrictive conditions on the rate of decay for the weighted
function $\omega$.{\footnote{One can easily check this point by
using the criteria about Poincar\'{e} inequalities for one
dimensional diffusion processes, e.g.\ see \cite[Table 1.4, Page
15]{CMF}.}} Roughly speaking, the difference is as follows: applying
$f \in C_b^{\infty}(\R^d)$ with support contained in the set $\{x\in
\R^d: |x|>r\}$ into the weighted local Dirichlet form
$$\tilde D_{\omega}(f,f):=\int \omega(x)|\nabla f(x)|^2\, \mu_V(dx),$$ we find that $\tilde D_{\omega}(f,f)$ only depends on the value of
$\omega$ in domain $\{x\in \R^d: |x|>r\}$; while for the weighted non-local Dirichlet form $$D_{\omega}(f,f):=\int \omega(x) \int
\frac{(f(y)-f(x))^2}{|y-x|^{d+\alpha}}\,dy \,\mu_V(dx),$$
$D_{\omega}(f,f)$ depends on the value of
$\omega$ in  $\R^d$, not only in $\{x\in \R^d: |x|>r\}$.

\bigskip

The remaining part of this paper is organized as follows. In the
next section we recall results on non-local Dirichlet forms and
their generator, which apply all the examples to be studied in our paper. Section \ref{sec3} is devoted to
general theory
 on the existence of weighted Poincar\'{e} inequalities for non-local Dirichlet forms via
Lyapunov-type conditions of the associated truncated Dirichlet
forms, which shall be interesting of itself. In Section \ref{sec4},
we establish efficient Lyapunov conditions for the truncated Dirichlet
form associated with original Dirichlet form. This, along with the results in
Sections \ref{sec2} and \ref{sec3}, gives us weighted Poincar\'{e}
inequalities for general non-local Dirichlet forms (see Theorem
\ref{th4.1}), which immediately yield Theorems \ref{thm1.1} and
\ref{thm1.4}. The proofs of Proposition \ref{prop1.3}, Proposition \ref{prop1.6.0} and Proposition
\ref{prop1.6} are also included here. In Section 5, we will state that
our approach to Theorem \ref{th4.1} also yields the criterion for
weighted Poincar\'{e} inequalities for Dirichlet forms associated
with symmetric Markov processes under Girsanov transform of pure
pump type. We also consider the corresponding concentration of
measure, which indicates that the inequalities we derived above are
optimal in some sense.

\section{Characterization of Operators Associated with Non-local Dirichlet Forms}\label{sec2}
\subsection{Non-local Dirichlet Forms in Terms of Generators}
Let $C_c^\infty(\R^d)$ be the set of smooth functions with compact
support on $\R^d$. Let $V$ be a locally bounded measurable function
on $\R^d$ such that $\int \,e^{-V(x)}\,dx<\infty,$ and $j$ be a
measurable function on $\R^{2d}\setminus\{(x,y)\in \R^{2d};\ x=y\}$
such that $j(x,y)\geqslant 0$ and $j(x,y)=j(y,x)$. Let
$\mu_V(dx)=C_V e^{-V(x)}\,dx$ be a probability measure on $(\R^d,
\mathscr{B}(\R^d))$ with a normalizing constant
$$ C_V:=\frac{1}{\int e^{-V(x)}\,dx}. $$

Consider
$$
\aligned D_{j,V}(f,g):=&\frac{1}{2}\iint
{\big(f(y)-f(x)\big)\big(g(y)-g(x)\big)}j(x,y)\,\mu_V(dy)\,\mu_V(dx),\\
\mathscr{D}(D_{j,V}):=&\bigg\{f\in L^2(\mu_V):
D_{j,V}(f,f)<\infty\bigg\}.
\endaligned
$$
Note that the kernel $j(x,y)$ is only defined on the set $\{(x,y) \in \R^{2d}: x
\neq y\}$. Since $\{(x,y) \in \R^{2d}: x =y\}$ is a zero-measure set
under $\mu_V(dx)\,\mu_V(dy)$, we can still write the integral domain
as $\R^d\times \R^d$ in the expression above for $D_{j,V}$.

Suppose that for any $\varepsilon>0$, the function $$x \mapsto \int_{\{|x-y|>\varepsilon\}}j(x,y)\,\mu_V(dy)$$ is locally
 integrable with respect to $\mu_V(dx)$. Then, according to the proof of \cite[Example 1.2.4]{FOT}, $(D_{j,V}, \mathscr{D}(D_{j,V}))$ is
a symmetric Dirichlet form on $L^2(\mu_V)$ in the wide sense;
namely, the set $\mathscr{D}(D_{j,V})$ is not necessarily dense in
$L^2(\mu_V)$, c.f.\ see \cite[Chapter 1.3]{FOT}.

\begin{theorem}\label{thm2.1} The following three statements are satisfied.

\begin{itemize}
\item[(1)] If
\begin{equation}\label{thm2.1.1}I_1(x):=\int\big(1\wedge
|x-y|^2\big)j(x,y)\,\mu_V(dy)\in
L_{\textrm{loc}}^1(\mu_V),\end{equation} then
$C_c^\infty(\R^d)\subset \mathscr{D}(D_{j,V})$; if
\begin{equation}\label{thm2.1.2}I_1(x):=\int\big(1\wedge
|x-y|^2\big)j(x,y)\,\mu_V(dy)\in L^1(\mu_V),\end{equation} then
$C_b^\infty(\R^d)\subset \mathscr{D}(D_{j,V})$. In particular, in
both cases $(D_{j,V}, \mathscr{D}(D_{j,V}))$ is a Dirichlet form on
$L^2(\mu_V)$.

\item[(2)] For any $x\in\R^d$, set
 $$I_2(x):=\int_{\{|z|\le 1\}}|z|\Big| j(x,x+z)e^{-V(x+z)}-j(x,x-z)e^{-V(x-z)}\Big|\,dz.$$ Suppose that 
\begin{equation}\label{thm2.1.3}\textrm{ the function }x\mapsto I_i(x) \ \textrm{ is  locally  bounded for }i=1,\,2.\end{equation}
Then, for any $f$, $g\in C_c^\infty(\R^d)$, $$D_{j,V}(f,g)=-\int
gL_{j,V}f\,d\mu_V,$$ where
\begin{equation}\label{thm2.1.4}
\aligned &L_{j,V}f(x)\\
&=C_V\bigg[\int \Big (f(x+z)-f(x)-\nabla f(x)\cdot
z\I_{\{|z|\le 1\}}\Big)j(x,x+z)e^{-V(x+z)}\,dz\\
&\qquad\,\,\,+\frac{1}{2}\nabla f(x) \cdot \int_{\{|z|\le 1\}}
z\Big(j(x,x+z)e^{-V(x+z)}-j(x,x-z)e^{-V(x-z)}\Big)\,dz\bigg].
\endaligned
\end{equation}

\item[(3)] Suppose that \eqref{thm2.1.3} holds, and
there is a constant $r_0>0$ such that for each $r\ge r_0$,
\begin{equation}\label{thm2.1.5}
I_{3,r}(x):=\I_{(B(0,2r))^c}(x)\int_{\{|z+x|\leqslant r\}} j(x,x+z)
e^{-V(x+z)}\,dz \in L^2(\mu_V).
\end{equation}
Then for each $f\in C_c^{\infty}(\R^d)$, $L_{j,V}f \in L^2(\mu_V)$.

\end{itemize}
\end{theorem}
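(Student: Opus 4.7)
The plan for (1) is a direct estimate. For $f \in C_c^\infty(\R^d)$ with support $K$, I use the pointwise bound $(f(y)-f(x))^2 \leq C_f\,(1 \wedge |x-y|^2)$ with $C_f$ controlled by $\|f\|_\infty$ and $\|\nabla f\|_\infty$. When $x,y \notin K$ the integrand of $D_{j,V}(f,f)$ vanishes; by symmetry of $j$, the remaining region reduces (after relabelling $x \leftrightarrow y$) to the case $x \in K$, whence $D_{j,V}(f,f) \le C_f \int_K I_1(x)\,\mu_V(dx) < \infty$ by \eqref{thm2.1.1}. For $f \in C_b^\infty(\R^d)$ the same pointwise bound holds globally and \eqref{thm2.1.2} is exactly what is needed. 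Since $C_c^\infty(\R^d)$ is dense in $L^2(\mu_V)$, this also upgrades $(D_{j,V}, \mathscr{D}(D_{j,V}))$ to a Dirichlet form in the usual sense.

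For (2), I expand $(f(y)-f(x))(g(y)-g(x))$, use the symmetry $j(x,y)=j(y,x)$ together with a relabelling, and apply Fubini (justified by \eqref{thm2.1.3}) to obtain
\[ D_{j,V}(f,g) = -\int g(x) \int \bigl(f(y)-f(x)\bigr) j(x,y)\,\mu_V(dy)\,\mu_V(dx). \]
To make the inner integral rigorous at the diagonal $y=x$, I split $f(y)-f(x) = \bigl(f(y)-f(x) - \nabla f(x)\cdot(y-x)\I_{|y-x|\le 1}\bigr) + \nabla f(x)\cdot(y-x)\I_{|y-x|\le 1}$. The first piece is $O(|y-x|^2 \wedge 1)$ by Taylor's theorem and is integrable against $j\,\mu_V$ thanks to the bound on $I_1$. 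For the second piece, substituting $z=y-x$ and exploiting the odd symmetry $z \mapsto -z$, I rewrite $\int_{|z|\le 1} z\, j(x,x+z) e^{-V(x+z)}\,dz$ as the absolutely convergent principal value $\tfrac{1}{2}\int_{|z|\le 1} z\bigl[j(x,x+z)e^{-V(x+z)} - j(x,x-z)e^{-V(x-z)}\bigr]\,dz$, which is controlled by $I_2(x)$. Combining these yields formula \eqref{thm2.1.4} for $L_{j,V}$.

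For (3), I fix $r \ge r_0$ large enough that $\mathrm{supp}(f) \subset B(0,r)$, and split $\R^d$ into $B(0,2r)$ and its complement. On $B(0,2r)$, local boundedness of $I_1, I_2$ from \eqref{thm2.1.3} gives $|L_{j,V}f(x)| \le C(f)$, and the resulting contribution is in $L^2(\mu_V)$ since $\mu_V$ is finite. Off $B(0,2r)$ one has $f(x)=0$ and $\nabla f(x)=0$, so \eqref{thm2.1.4} collapses to $L_{j,V}f(x) = C_V \int f(x+z)j(x,x+z)e^{-V(x+z)}\,dz$; the integrand is supported on $\{z:|x+z|\le r\}$, whence $|L_{j,V}f(x)| \le C_V\|f\|_\infty I_{3,r}(x)$, which lies in $L^2(\mu_V)$ by \eqref{thm2.1.5}. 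The step I expect to be most delicate is the principal-value identity in (2): rather than manipulate the formally non-integrable integrand $z\, j(x,x+z) e^{-V(x+z)}$ directly, I would truncate the kernel on $\{|y-x| > \varepsilon\}$, apply Fubini unambiguously at finite $\varepsilon$, and then pass to $\varepsilon \downarrow 0$ using \eqref{thm2.1.3} to recover the stated representation.
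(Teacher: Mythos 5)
Your proposal is correct and follows essentially the same route as the paper's proof: the same compact-support splitting combined with the bound $(f(y)-f(x))^2\le C_f\,(1\wedge|x-y|^2)$ for part (1), the same $\varepsilon$-truncation, symmetrization in $x\leftrightarrow y$, antisymmetrization of the drift term via $z\mapsto -z$, and dominated-convergence passage to $\varepsilon\downarrow 0$ for part (2), and the identical decomposition into $B(0,2r)$ and its complement with the bound $|L_{j,V}f|\le C_V\|f\|_\infty I_{3,r}$ for part (3). The only cosmetic difference is that in part (1) the paper introduces two radii $r_1<r_2$ rather than working directly with the support, but the estimate is the same.
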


\begin{proof} (a) For any $f\in C_c^\infty(\R^d)$, choose
$r_2$, $r_1$ large enough such that $r_2>r_1+1$ and
$\text{supp}(f)\subseteq B(0,r_1).$ Set
$$c_0(f):=\text{max}\Big\{\sup_{x \in \R^d}|\nabla f(x)|^2,\
4\sup_{x \in \R^d}|f(x)|^2\Big\}.$$ Then,
by the symmetric property that $j(x,y)=j(y,x)$,
\begin{equation}\label{c1}
\aligned D_{j,V}&(f,f)\\
=&\iint_{B(0,r_2)\times
B(0,r_2)}(f(x)-f(y))^2j(x,y)\,\mu_V(dy)\,\mu_V(dx)\\
&+\iint_{B(0,r_2)\times
B(0,r_2)^c}f(x)^2j(x,y)\,\mu_V(dy)\,\mu_V(dx)\\
&+\iint_{B(0,r_2)^c\times
B(0,r_2)}f(y)^2j(x,y)\,\mu_V(dy)\,\mu_V(dx)\\
=&\iint_{B(0,r_2)\times
B(0,r_2)}(f(x)-f(y))^2j(x,y)\,\mu_V(dy)\,\mu_V(dx)\\
&+2\iint_{B(0,r_2)\times
B(0,r_2)^c}f(x)^2j(x,y)\,\mu_V(dy)\,\mu_V(dx)\\
\le &\sup_{x \in \R^d}|\nabla f(x)|^2\iint_{B(0,r_2)\times
B(0,r_2)}|x-y|^2j(x,y)\,\mu_V(dy)\,\mu_V(dx)\\
&+4\sup_{x \in \R^d}|f(x)|^2\iint_{B(0,r_1)\times
B(0,r_2)^c}j(x,y)\,\mu_V(dy)\,\mu_V(dx)\\
\le&c_0(f)\bigg[\int_{B(0,r_2)}\int_{\{|x-y|\le
2r_2\}}|x-y|^2j(x,y)\,\mu_V(dy)\,\mu_V(dx)\\
&\qquad\qquad+\int_{B(0,r_1)}\int_{\{|x-y|\ge r_2-r_1\}}j(x,y)\,\mu_V(dy)\,\mu_V(dx)\bigg]\\
\le&c_0(f)\bigg[\int_{B(0,r_2)}\int
\big(|x-y|^2\wedge (4r_2^2)\big)j(x,y)\,\mu_V(dy)\,\mu_V(dx)\\
&\qquad\qquad+\int_{B(0,r_1)}\int \big(|x-y|^2\wedge (r_2-r_1)^2\big)j(x,y)\,\mu_V(dy)\,\mu_V(dx)\bigg]\\
\le &2c_0(f)\int _{B(0,r_2)}\int \big(|x-y|^2\wedge
(4r_2^2)\big)j(x,y)\,\mu_V(dy)\,\mu_V(dx)\\
\le &8r_2^2c_0(f)\int _{B(0,r_2)}\int \big(|x-y|^2\wedge
1\big)j(x,y)\,\mu_V(dy)\,\mu_V(dx).
\endaligned
\end{equation} This, along with \eqref{thm2.1.1}, yields the first conclusion of part
(1).

For each $f\in C_b^{\infty}(\R^d)$, we still set
$$c_0(f):=\text{max}\Big\{\sup_{x \in \R^d}|\nabla f(x)|^2,\
4\sup_{x \in \R^d}|f(x)|^2\Big\}.$$
Then, by the mean value theorem, for any $x$, $y\in\R^d$,
$$|f(x)-f(y)|^2\leqslant c_0(f)\big(1\wedge |x-y|^2\big).$$ Hence,
$D_{j,V}(f,f)<\infty$, if (\ref{thm2.1.2}) holds. This proves the
second desired assertion of part (1).

(b) The proof of part (2) essentially follows from that of
\cite[Theorem 1.2]{WJ2}. For the sake of completeness, here we
present the proof in a different and simple way. We first note that
under \eqref{thm2.1.3}, \eqref{thm2.1.1} is satisfied, and so
$C_c^\infty(\R^d)\subset \mathscr{D}(D_{j,V})$. For each $f\in
C_c^{\infty}(\R^d)$, by \eqref{thm2.1.4} and the mean value theorem, we
have
\begin{equation*}
\begin{split}
&L_{j,V}f(x)\leqslant c_1\big(I_1(x)+I_2(x)\big)
\end{split}
\end{equation*}
for some constant $c_1:=c_1(f)>0$. Hence, (\ref{thm2.1.3}) implies that $L_{j,V}f$
is well defined and locally bounded.

Next, for each $\varepsilon\in(0,1)$ and $f\in C_c^\infty(\R^d)$, define
\begin{equation*}
\aligned L_{j,V,\varepsilon}&f(x)\\
=&C_V\bigg[\int_{\{|z|>\varepsilon\}} \Big (f(x+z)-f(x)-\nabla f(x)\cdot
z\I_{\{|z|\le 1\}}\Big)j(x,x+z)e^{-V(x+z)}\,dz\\
&\qquad\,\,\,+\frac{1}{2}\nabla f(x) \cdot \int_{\{\varepsilon<|z|\le 1\}}
z\Big(j(x,x+z)e^{-V(x+z)}-j(x,x-z)e^{-V(x-z)}\Big)\,dz\bigg]\\
=&C_V \int_{\{|z|>\varepsilon\}} \big (f(x+z)-f(x)\big)j(x,x+z)e^{-V(x+z)}\,dz\\
&\qquad-\frac{C_V}{2}\nabla f(x) \cdot \int_{\{\varepsilon<|z|\le 1\}}
z\Big(j(x,x+z)e^{-V(x+z)}+j(x,x-z)e^{-V(x-z)}\Big)\,dz\\
=:&L_{1,\varepsilon}f(x)+L_{2,\varepsilon}f(x).
\endaligned
\end{equation*}
Since for any $x\in\R^d$ and $z\in\R^d$ with $|z|\ge \varepsilon$,
$$j(x,x+z)\leqslant \Big(\frac{|z|^2}{\varepsilon^2}\wedge
1\Big)j(x,x+z),$$ the condition $I_1(x)$ is locally bounded implies
that $L_{i,\varepsilon}f$, for $i=1,$ $2$ and any
$\varepsilon\in(0,1)$, are well defined and locally bounded. By the
change of variable from $z$ to $-z$ and the symmetric property of
$j(x,y)$, we have $L_{2,\varepsilon}f(x)=0$ for all $x\in \R^d$.
That is. $L_{j,V,\varepsilon}=L_{1,\varepsilon}.$ Hence, for each $f$, $g\in C_c^{\infty}(\R^d)$,
\begin{equation*}
\begin{split}
-\int L_{j,V,\varepsilon}f(x)g(x)\,\mu_V(dx)&=-
\int L_{1,\varepsilon}f(x)g(x)\,\mu_V(dx)\\
&=-\iint_{\{|x-y|>\varepsilon\}}\big(f(y)-f(x)\big)g(x)j(x,y)\,\mu_V(dy)\,\mu_V(dx).
\end{split}
\end{equation*}
Changing the position of $x$ and $y$, it holds that
\begin{equation*}
-\int L_{j,V,\varepsilon}f(x)g(x)\,\mu_V(dx)=
\iint_{\{|x-y|>\varepsilon\}}\big(f(y)-f(x)\big)g(y)j(x,y)\,\mu_V(dy)\,\mu_V(dx),
\end{equation*}
Therefore, combining two equalities above, we have
\begin{equation}\label{sec2.1.20}
\begin{split}
-\int& L_{j,V,\varepsilon}f(x)g(x)\,\mu_V(dx)\\
&=\frac{1}{2}
\iint_{\{|x-y|>\varepsilon\}}\big(f(y)-f(x)\big)\big(g(y)-g(x)\big)j(x,y)\,\mu_V(dy)\,\mu_V(dx).
\end{split}
\end{equation}

By the mean value theorem and (\ref{thm2.1.3}), in the support of $g$
the function $L_{j,V,\varepsilon}f(x)$ is uniformly bounded for any
$\varepsilon\in (0,1)$. Thus, the dominated convergence theorem
yields that
\begin{equation*}
\lim_{\varepsilon \rightarrow 0}\int L_{j,V,\varepsilon}f(x)g(x)\,\mu_V(dx)=
\int L_{j,V}f(x)g(x)\,\mu_V(dx)
\end{equation*}
On the other hand, according to estimates in (\ref{c1}) and also the dominated convergence theorem,
\begin{equation*}
\lim_{\varepsilon \rightarrow 0}\frac{1}{2}
\iint_{\{|x-y|>\varepsilon\}}\big(f(y)-f(x)\big)\big(g(y)-g(x)\big)j(x,y)\,\mu_V(dy)\,\mu_V(dx)
=D_{j,V}(f,g).
\end{equation*}
Then, letting $\varepsilon \rightarrow 0$ in (\ref{sec2.1.20}), we prove the conclusion of
part (2).

(c) For the part (3), the proof is almost the same as that of
\cite[Lemma 2.1]{WJ2}, but we can get an improvement of the
conclusion by a minor modification. In fact, for each $f\in
C_c^{\infty}(\R^d)$,  there is a constant $r>r_0$ such that
$\text{supp}(f)\subseteq B(0,r)$. As mentioned in the proof of part
(2) above, under (\ref{thm2.1.3}) the function $L_{j,V}f$ is locally bounded, and
so $\|\I_{B(0,2r)}L_{j,V}f\|_{L^2(\mu_V)}<\infty$. On the other
hand, since for any $|x|>r$, $f(x)=0$ and $\nabla f(x)=0$, it
follows from \eqref{thm2.1.4} that
\begin{equation*}
\begin{split}
\Big|\I_{(B(0,2r))^c}(x)&L_{j,V}f(x)\Big|\\
&=
\Big|C_V\I_{(B(0,2r))^c}(x)\int f(x+z)j(x,x+z)e^{-V(x+z)}\,dz\Big|\\
&\leqslant C_V\|f\|_{\infty}\I_{(B(0,2r))^c}(x)
\int \I_{B(0,r)}(x+z)j(x,x+z)e^{-V(x+z)}\,dz\\
&= C_V\|f\|_{\infty}\I_{(B(0,2r))^c}(x) \int_{\{ |z+x|\leqslant
r\}}j(x,x+z)e^{-V(x+z)}\,dz,
\end{split}
\end{equation*}
Then, by \eqref{thm2.1.5}, we get that for any $r>r_0$,
$\|\I_{(B(0,2r))^c}L_{j,V}f\|_{L^2(\mu_V)}<\infty$. This completes
the proof.
\end{proof}

\medskip

Under \eqref{thm2.1.1}, let $(D_{j,V}, \mathscr{E}(D_{j,V}))$ be
the closure of $(D_{j,V}, C_c^\infty(\R^d))$ under norm
$\|.\|_{D_{j,V},1} $ on $L^2(\mu_V)$, where
$\|f\|_{D_{j,V},1}:=\big(\|f\|_{L^2(\mu_V)}^2+D_{j,V}(f,f)\big)^{1/2}$
for $f\in C_c^{\infty}(\R^d)$. Then, the bilinear form $(D_{j,V},
\mathscr{E}(D_{j,V}))$ becomes a regular Dirichlet form on
$L^2(\mu_V)$. It holds that $\mathscr{E}(D_{j,V})\subseteq
\mathscr{D}(D_{j,V})$. However, those two domains may be different,
and the Dirichlet form $(D_{j,V}, \mathscr{D}(D_{j,V}))$ may not be
regular in generally. On the other hand, we note that under \eqref{thm2.1.3}
the operator $L_{j,V}$ does not necessarily map $C_c^\infty(\R^d)$
into $L^2(\mu_V)$, though it is well defined in the sense of
pointwise on $C_c^\infty(\R^d)$. If moreover (\ref{thm2.1.5}) holds,
then the Friedrich extension of $(L_{j,V},C_c^{\infty}(\R^d))$ is a
self-joint operator, which is the infinitesimal generator of the
Dirichelt form $(D_{j,V}, \mathscr{E}(D_{j,V}))$.

\subsection{Examples}
In this part, we will present several examples as an application of
Theorem \ref{thm2.1}. In all the examples, let $V$ be a locally
bounded function on $\R^d$ such that $\int e^{-V(x)}\,dx<\infty$ and
$ e^{-V}$ is bounded in $\R^d$. Let $\mu_V(dx)=C_V e^{-V(x)}\,dx$ be
a probability measure on $(\R^d, \mathscr{B}(\R^d))$.

\begin{example}\label{exm2.1}  Let
$\rho$ be a positive measurable function on $\R_+:=(0,\infty)$ such that
\begin{equation}\label{exm2.1.1}
\int_{(0,\infty)} \rho(r)\big(1\wedge r^2\big)r^{d-1}\,dr<\infty.
\end{equation} Consider the following form
\begin{equation}\label{exm2.1.2}\aligned
D_{\rho,V}(f,g):=&\frac{1}{2}\iint
{\big(f(y)-f(x)\big)\big(g(y)-g(x)\big)}\rho(|x-y|)\,dy\,\mu_V(dx),\\
\mathscr{D}(D_{\rho,V}):=&\bigg\{f\in L^2(\mu_V): D_{\rho,
V}(f,f)<\infty\bigg\}.\endaligned
\end{equation}
Then, $(D_{\rho,V}, \mathscr{D}(D_{\rho,V}))$ is a symmetric
Dirichlet form on $L^2(\mu_V)$ such that $C_b^\infty(\R^d)\subset
\mathscr{D}(D_{\rho,V})$. Moreover, if $e^{-V}\in C_b^1(\R^d)$, then for any $f$, $g\in C_c^\infty(\R^d)$,
$$D_{\rho,V}(f,g)=-\int gL_{\rho,V}f\,d\mu_V,$$ where
\begin{equation}\label{exm2.1.3}
\begin{split}
&L_{\rho,V}f(x)\\&=\frac{1}{2}\bigg[\int \Big (f(x+z)-f(x)-\nabla
f(x)\cdot
z\I_{\{|z|\le 1\}}\Big)\rho(|z|) \big(e^{V(x)-V(x+z)}+1\big)\,dz\\
&\qquad\quad+\nabla f(x) \cdot \int_{\{|z|\leqslant 1\}} z
\rho(|z|)\big(e^{V(x)-V(x+z)}-1\big)\,dz\bigg].
\end{split}
\end{equation}
Additionally, if for $r$ big enough,
\begin{equation}\label{exm2.1.4} \int_{\{|x|\ge
2r\}}\Big(\sup_{z\in\R^d:|z|\ge
|x|-r}\rho(|z|)\Big)^2e^{V(x)}\,dx<\infty,
\end{equation}
then $L_{\rho,V}$ maps $C_c^\infty(\R^d)$ into $L^2(\mu_V)$.
\end{example}

\begin{proof}
By changing the position of $x$, $y$ and the symmetric property of $D_{\rho,V}$, it
is easy to see that for any $f$, $g\in C_c^\infty(\R^d)$,
$$
\aligned D_{\rho,V}&(f,g)=\frac{1}{2}\iint{\big(f(y)\!-f(x)\big)\big(g(y)-g(x)\big)}j(x,y)\,\mu_V(dy)\,\mu_V(dx),
\endaligned
$$
where
$$j(x,y)=\frac{1}{2C_V}\rho(|x-y|)\big(e^{V(x)}+e^{V(y)}\big).$$

First, for all $x\in\R^d$,
\begin{equation}\label{exm2.1.5}
I_1(x)=\frac{1}{2}\int \big(1\wedge
|x-y|^2\big)\rho(|x-y|)\big(e^{V(x)-V(y)}+1\big)\,dy,
\end{equation}
so
\begin{equation*}
\begin{split}
\int I_1(x)\, \mu_V(dx)=&
\frac{C_V}{2}\bigg[\iint\big(1\wedge |x-y|^2\big)\rho(|x-y|)e^{-V(x)}\,dy\,dx\\
&\qquad+\iint\big(1\wedge |x-y|^2\big)\rho(|x-y|)e^{-V(y)}\,dy\,dx\bigg]\\
=&C_V \iint\big(1\wedge |x-y|^2\big)\rho(|x-y|)\,dye^{-V(x)}\,dx
\end{split}
\end{equation*}
Then, according to (\ref{exm2.1.1}) and $e^{-V}\in L^1(dx)$, $$\int
I_1(x)\, \mu_V(dx)<\infty,$$ i.e.\ (\ref{thm2.1.2}) is true.
According to Theorem \ref{thm2.1} (1), $(D_{\rho,V},
\mathscr{D}(D_{\rho,V}))$ is a symmetric Dirichlet form on
$L^2(\mu_V)$ such that $C_b^\infty(\R^d)\subset
\mathscr{D}(D_{\rho,V})$.

Second, since $ e^{-V}$ is bounded and $V$ is locally bounded, by (\ref{exm2.1.5}) and
(\ref{exm2.1.1}), it is easy to check $I_1(x)$ is locally bounded. On the
other hand,
\begin{equation*}
I_2(x)= \frac{1}{2C_V}\int_{\{|z|\le 1\}} |z|
\rho(|z|)\Big|e^{V(x)-V(x+z)}-e^{V(x)-V(x-z)}\Big|\,dz,
\end{equation*}
Due to the fact $ e^{-V}\in C_b^1(\R^d)$, it follows from
(\ref{exm2.1.1}) and the mean value theorem that $I_2(x)$ is locally
bounded. Thus, (\ref{thm2.1.3}) holds. Therefore, according to Theorem
\ref{thm2.1} (2), we know that for any $f$, $g\in C_c^\infty(\R^d)$,
$$D_{\rho,V}(f,g)=-\int gL_{\rho,V}f\,d\mu_V,$$ where
\begin{equation*}
\begin{split}
L_{\rho,V}f(x)=&\frac{1}{2}\bigg[\int \Big (f(x+z)-f(x)-\nabla
f(x)\cdot
z\I_{\{|z|\le 1\}}\Big)\rho(|z|) \big(e^{V(x)-V(x+z)}+1\big)\,dz\\
&\qquad+\frac{1}{2}\nabla f(x) \cdot \int_{\{|z|\leqslant 1\}} z
\rho(|z|)\Big(e^{V(x)-V(x+z)}-e^{V(x)-V(x-z)}\Big)\,dz\bigg].
\end{split}
\end{equation*}
Combining it with the fact
$$ \aligned \int_{\{|z|\leqslant 1\}} z
\rho(|z|)&\Big(e^{V(x)-V(x+z)}-e^{V(x)-V(x-z)}\Big)\,dz\\
&=\int_{\{|z|\leqslant 1\}} z
\rho(|z|)\Big(\big(e^{V(x)-V(x+z)}-1\big)-\big(e^{V(x)-V(x-z)}-1\big)\Big)\,dz\\
&=2\int_{\{|z|\le 1\}} z
\rho(|z|)\big(e^{V(x)-V(x+z)}-1\big)\,dz\endaligned
$$
yields the required assertion (\ref{exm2.1.3}).

\medskip

Finally, for $r$ large enough, we have
\begin{equation*}
\begin{split}
I_{3,r}(x)=\frac{1}{2C_V}\I_{(B(0,2r))^c}(x)\int_{\{|z+x| \leqslant
r\}} \rho(|z|)\big( 1+e^{V(x)-V(x+z)}\big)\,dz,
\end{split}
\end{equation*}
According to \eqref{exm2.1.1} and the facts that $e^{-V}$ is bounded and for any
$|x|\ge 2r$ and $|x+z|\le r$, $$|z|\ge |x|-|x+z|\ge |x|-r\ge r,$$ we get that
for $r>0$ large enough, there exist $c_i:=c_i(r)>0$ $(i=1,2,3)$ such that
\begin{equation*}\aligned I_{3,r}(x)&\leqslant
c_1\I_{(B(0,2r))^c}(x)\bigg[\int_{\{|z|\ge r\}}\rho(|z|)\,dz\\
&\qquad\qquad \qquad\qquad\qquad\qquad+
e^{V(x)}\Big(\int_{\{|x+z|\le r\}}e^{-V(x+z)}\,dz\Big)
\Big(\sup_{|x+z|\le r}\rho(|z|)\Big)\bigg]\\
&\leqslant c_2\I_{(B(0,2r))^c}(x)\bigg[1+
e^{V(x)}\Big(\int_{\{|z|\le r\}}e^{-V(z)}\,dz\Big) \Big(\sup_{|z|\ge
|x|- r}\rho(|z|)\Big)\bigg]\\
&\leqslant c_3\I_{(B(0,2r))^c}(x)\bigg[1+
e^{V(x)}\Big(\sup_{|z|\ge
|x|- r}\rho(|z|)\Big)\bigg],\endaligned
\end{equation*}
which, along with (\ref{exm2.1.4}), yields that $I_{3,r}\in L^2(\mu_V)$.
Hence, by Theorem \ref{thm2.1} (3), we know that $L_{\rho,V}f \in
L^2(\mu_{V})$ for every $f \in C_c^{\infty}(\R^d)$.
\end{proof}

Since $e^{-V}\in L^1(dx)$ and $e^{-V}$ is bounded, $e^{-2V}\in
L^1(dx)$, and hence we can define a probability measure
$$\mu_{2V}(dx):=\frac{1}{\int
e^{-2V(x)}dx}e^{-2V(x)}\,dx=:C_{2V}e^{-2V(x)}dx.$$

\begin{example}\label{exm2.2} Let $\psi$ be a positive
measurable function on $(0,\infty)$ satisfying
\begin{equation}\label{exm2.2.1} \int  \psi(r)\big(1\wedge
r^2\big)r^{d-1}\,dr<\infty.
\end{equation} Consider
the following form
\begin{equation}\label{exm2.2.2}\aligned
D_{\psi,V}(f,g):=&\frac{1}{2}\iint
{\big(f(y)-f(x)\big)\big(g(y)-g(x)\big)}\\
&\qquad\qquad\qquad\qquad\qquad\qquad\times\psi(|x-y|)\,e^{-V(y)}\,dy\,e^{-V(x)}\,dx,\\
\mathscr{D}(D_{\psi,V}):=&\bigg\{f\in L^2(\mu_{2V}): D_{\psi,
V}(f,f)<\infty\bigg\}.\endaligned
\end{equation}
Then, $(D_{\psi,V}, \mathscr{D}(D_{\psi,V}))$ is a symmetric
Dirichlet form on $L^2(\mu_{2V})$ such that $C_b^\infty(\R^d)$
$\subset \mathscr{D}(D_{\psi,V})$. Moreover, if $e^{-V} \in C_b^1(\R^d)$, then for any $f$, $g\in
C_c^\infty(\R^d)$,
$$D_{\psi,V}(f,g)=-\int gL_{\psi,V}f\,d\mu_{2V},$$ where
\begin{equation}\label{exm2.2.3}
\aligned L_{\psi,V}&f(x)\\
&=\frac{1}{C_{2V}}\bigg[\int \Big (f(x+z)-f(x)-\nabla f(x)\cdot
z\I_{\{|z|\leqslant 1\}}\Big)\psi(|z|) e^{V(x)-V(x+z)}\,dz\\
&\qquad\qquad+\nabla f(x) \cdot \int_{\{|z|\leqslant 1\}} z
\psi(|z|)\big(e^{V(x)-V(x+z)}-1\big)\,dz\bigg].
\endaligned
\end{equation}
Additionally, if for $r$ big enough, \begin{equation}\label{exm2.2.4} \int_{\{|x|\ge
2r\}}\Big(\sup_{|z|\ge |x|-r}\psi(|z|)\Big)^2\,dx<\infty.
\end{equation}
then $L_{\psi,V}f \in L^2(\mu_{2V})$ for each $f\in
C_c^{\infty}(\R^d)$.
\end{example}
\begin{proof}
It is easy to see that for any $f$, $g\in C_c^\infty(\R^d)$,
$$
\aligned D_{\psi,V}&(f,g)=\frac{1}{2}\iint_{x\neq
y}{\big(f(x)\!-f(y)\big)\big(g(x)-g(y)\big)}j(x,y)\,\mu_{2V}(dy)\,\mu_{2V}(dx),
\endaligned$$
where
$$j(x,y)=\frac{1}{C_{2V}^2}\psi(|x-y|)\big(e^{V(x)+V(y)}\big).$$
Then,
\begin{equation*}\aligned
I_1(x)=&\frac{1}{C_{2V}}\int \big(1\wedge
|x-y|^2\big)\psi(|x-y|)e^{V(x)-V(y)}dy,\\
I_2(x)=& \frac{1}{C_{2V}^2}\int_{\{|z|\leqslant 1\}} |z|
\psi(|z|)\Big|e^{V(x)-V(x+z)}-e^{V(x)-V(x-z)}\Big|\,dz,\endaligned
\end{equation*}
Since $e^{-V}$ is bounded, $e^{-V} \in L^1(dx)$ and $V$ is locally
bounded, it follows from (\ref{exm2.2.1}) that \eqref{thm2.1.2} holds and
$I_1(x)$ is locally bounded. If $ e^{-V}\in C_b^1(\R^d)$, by
(\ref{exm2.2.1}) and the mean value theorem, we can check that $I_2(x)$ is
also locally bounded. According to Theorem \ref{thm2.1} (1) and (2),
we know that $(D_{\psi,V}, \mathscr{D}(D_{\psi,V}))$ is a symmetric
Dirichlet form on $L^2(\mu_{2V})$ such that $C_b^\infty(\R^d)$
$\subset \mathscr{D}(D_{\psi,V})$, and for any $f$, $g\in
C_c^\infty(\R^d)$,
$$D_{\psi,V}(f,g)=-\int gL_{\psi,V}f\,d\mu_{2V},$$ where
$$ \aligned L_{\psi,V}f(x)=&\frac{1}{C_{2V}}\bigg[\int \Big
(f(x+z)-f(x)-\nabla f(x)\cdot
z\I_{\{|z|\leqslant 1\}}\Big)\psi(|z|) e^{V(x)-V(x+z)}\,dz\\
&\qquad\qquad+\frac{1}{2}\nabla f(x) \cdot \int_{\{|z|\leqslant 1\}}
z \psi(|z|)\Big(e^{V(x)-V(x+z)}-e^{V(x)-V(x-z)}\Big)\,dz\bigg].
\endaligned
$$
Combining this with the fact that
$$ \aligned \int_{\{|z|\leqslant 1\}} &z
\psi(|z|)\Big(e^{V(x)-V(x+z)}-e^{V(x)-V(x-z)}\Big)\,dz\\
&=\int_{\{|z|\leqslant 1\}} z
\psi(|z|)\Big(\big(e^{V(x)-V(x+z)}-1\big)-\big(e^{V(x)-V(x-z)}-1\big)\Big)\,dz\\
&=2\int_{\{|z|\le 1\}} z
\psi(|z|)\big(e^{V(x)-V(x+z)}-1\big)\,dz\endaligned
$$
we get the required expression (\ref{exm2.2.3}).

For $r$ big enough, we have
\begin{equation*}
\begin{split}
I_{3,r}(x)=\frac{1}{C_{2V}^2}\I_{(B(0,2r))^c}(x)\int_{\{|z+x|
\leqslant r\}} \psi(|z|)\big( e^{V(x)-V(x+z)}\big)dz,
\end{split}
\end{equation*}
By the direct computation as that in Example \ref{exm2.1},
\begin{equation*}\aligned I_{3,r}(x)&\leqslant c_1\I_{(B(0,2r))^c}(x)\bigg[
e^{V(x)}\Big(\int_{\{|z|\le r\}}e^{-V(z)}\,dz\Big) \Big(\sup_{|z|\ge
|x|- r}\psi(|z|)\Big)\bigg]\\
&\le c_2\I_{(B(0,2r))^c}(x)\bigg[
e^{V(x)}\Big(\sup_{|z|\ge
|x|- r}\psi(|z|)\Big)\bigg]\endaligned
\end{equation*} holds for some constants $c_i:=c_i(r)$ ($i=1,2$).
This, along with (\ref{exm2.2.4}), implies that $I_{3,r}\in
L^2(\mu_{2V})$. According to Theorem \ref{thm2.1} (3), we know that
$L_{\psi,V}f \in L^2(\mu_{2V})$ for every $f \in C_c^{\infty}(\R^d)$
\end{proof}

In the following example, we consider the same form
$D_{\psi,V}(f,g)$ as that in Example \ref{exm2.2} on the space $L^2(\mu_V)$, not $L^2(\mu_{2V})$.

\begin{example}\label{exm2.3} Let $\psi$ be the same function
as that in Example \ref{exm2.2}. Consider the form $D_{\psi,V}(f,g)$
defined by (\ref{exm2.2.2}), but with the following domain
\begin{equation*}
\mathscr{D}_2(D_{\psi,V}):=\bigg\{f\in L^2(\mu_{V}): D_{\psi,
V}(f,f)<\infty\bigg\}.
\end{equation*}
Then, the bilinear form $(D_{\psi,V}, \mathscr{D}_2(D_{\psi,V}))$ is
a symmetric Dirichlet form on $L^2(\mu_{V})$ such that
$C_b^\infty(\R^d)\subset \mathscr{D}_2(D_{\psi,V})$. Furthermore, if
$e^{-V}\in C_b^1(\R^d)$, then for any
$f$, $g\in C_c^\infty(\R^d)$,
$$D_{\psi,V}(f,g)=-\int gL_{\psi,V,2}f\,d\mu_V,$$ where
\begin{equation*}
\aligned L_{\psi,V,2}f(x)=&{C_{V}}\bigg[\int \Big
(f(x+z)-f(x)-\nabla f(x)\cdot
z\I_{\{|z|\leqslant 1\}}\Big)\psi(|z|) e^{-V(x+z)}\,dz\\
&\qquad+\nabla f(x) \cdot \int_{\{|z|\leqslant 1\}} z
\psi(|z|)\big(e^{-V(x+z)}-e^{-V(x)}\big)\,dz\bigg].
\endaligned
\end{equation*}
Moreover, for each $f\in C_c^{\infty}(\R^d)$, $L_{\psi,V,2}f \in
L^2(\mu_V)$.
\end{example}
\begin{proof}
It is easy to see that for any $f$, $g\in C_c^\infty(\R^d)$,
$$
\aligned D_{\psi,V}&(f,g)=\frac{1}{2}\iint{\big(f(x)\!-f(y)\big)\big(g(x)-g(y)\big)}j(x,y)\,\mu_V(dy)\,\mu_V(dx),
\endaligned$$
where
$$j(x,y)=\psi(|x-y|).$$
Following the computation in Example \ref{exm2.1}, we can check that
under (\ref{exm2.2.1}) and the condition $e^{-V}\in C_b^1(\R^d)$, the statements (\ref{thm2.1.2}) and (\ref{thm2.1.3})
hold. Then, applying Theorem \ref{thm2.1} (1) and (2), we know that
$(D_{\psi,V}, \mathscr{D}_2(D_{\psi,V}))$ is a symmetric Dirichlet
form on $L^2(\mu_{V})$ such that $C_b^\infty(\R^d)\subset
\mathscr{D}_2(D_{\psi,V})$, and for any $f$, $g\in
C_c^\infty(\R^d)$,
$$D_{\psi,V}(f,g)=-\int gL_{\psi,V,2}f\,d\mu_V,$$ where
\begin{equation*}
\aligned L_{\psi,V,2}f(x)=&{C_{V}}\bigg[\int \Big
(f(x+z)-f(x)-\nabla f(x)\cdot
z\I_{\{|z|\leqslant 1\}}\Big)\psi(|z|) e^{-V(x+z)}\,dz\\
&\qquad+\nabla f(x) \cdot \int_{\{|z|\leqslant 1\}} z
\psi(|z|)\big(e^{-V(x+z)}-e^{-V(x)}\big)\,dz\bigg].
\endaligned
\end{equation*}
On the other hand, by direct computation,
\begin{equation*}
\begin{split}
&I_{3,r}(x)=\I_{(B(0,2r))^c}(x)\int_{\{|z+x| \leqslant r\}}
\psi(|z|)e^{-V(x+z)}\,dz,
\end{split}
\end{equation*}
which is bounded with respect to $x$, and hence $I_{3,r} \in
L^2(\mu_V)$. It follows from Theorem \ref{thm2.1} (3) that the
operator $L_{\psi,V,2}$ maps $ C_c^\infty(\R^d)$ into $L^2(\mu_V)$.
\end{proof}

In order to drive weighted functional inequalities in the next two sections, we consider the following examples about
truncated Dirichlet forms. The proof is similar to that of Examples
\ref{exm2.1} and \ref{exm2.2}, and we omit the details here.
\begin{example}{\bf (Truncated Dirichlet Form for
$(D_{\rho,V}, \mathscr{D}( D_{\rho,V}))$)} \label{exm2.4} Let $\rho$ be
the same function as that in Example \ref{exm2.1} such that (\ref{exm2.1.1})
is satisfied. For all $r>0,$ define $\hat
\rho(r):=\rho(r)\I_{\{r>1\}}$. Consider the following truncated form
\begin{equation*}
\begin{split}
\hat D_{\rho,V}(f,g):=& \frac{1}{2}\iint
{\big(f(y)-f(x)\big)\big(g(y)-g(x)\big)}\hat \rho(|x-y|)\,dy\,\mu_V(dx)\\
=&\frac{1}{2}\iint_{\{|x-y|>1\}}
{\big(f(y)-f(x)\big)\big(g(y)-g(x)\big)}\rho(|x-y|)\,dy\,\mu_V(dx),\\
\mathscr{D}(\hat D_{\rho,V}):=&\bigg\{f\in L^2(\mu_V): \hat D_{\rho,
V}(f,f)<\infty\bigg\}.\end{split}
\end{equation*}
Then, $(\hat D_{\rho,V}, \mathscr{D}(\hat D_{\rho,V}))$ is a
symmetric Dirichlet form on $L^2(\mu_V)$ such that
$C_b^\infty(\R^d)\subset \mathscr{D}(\hat D_{\rho,V})$, and for any
$f$, $g\in C_c^\infty(\R^d)$,
\begin{equation}\label{exm2.4.1}
\hat D_{\rho,V}(f,g)=-\int g \hat L_{\rho,V}f\,d\mu_V,
\end{equation}
where
\begin{equation*}
\begin{split}
\hat L_{\rho,V}f(x)=\frac{1}{2}\int_{\{|z|>1\}} \big
(f(x+z)-f(x)\big)\rho(|z|) \big(e^{V(x)-V(x+z)}+1\big)\,dz.
\end{split}
\end{equation*}
Furthermore, if (\ref{exm2.1.4}) holds, then $\hat L_{\rho,V}f \in
L^2(\mu_V)$ for each $f\in C_c^{\infty}(\R^d)$.
\end{example}
\begin{remark}
For the truncated Dirichlet form $(\hat D_{\rho,V}, \mathscr{D}(\hat D_{\rho,V}))$, in order to let (\ref{exm2.4.1}) hold, we do not need the condition
$e^{-V}\in C_b^1(\R^d)$ as that in Example \ref{exm2.1}. The reason is as follows: for
truncated Dirichlet form $(\hat D_{\rho,V}, \mathscr{D}(\hat D_{\rho,V}))$, $I_2(x)=0$, and so it suffices to check that $I_1(x)$ belongs to $L^1(\mu_V)$ and is locally bounded. Hence, according to the proof of Example \ref{exm2.1},
the condition $e^{-V}\in C_b^1(\R^d)$ is not necessary, and the assumption that $e^{-V}$ is bounded and
$V$ is locally bounded is enough to ensure the function $I_1(x)$ belongs to $L^1(\mu_V)$ and is locally bounded.
\end{remark}

\begin{example}{\bf (Truncated Dirichlet Form for
$(D_{\psi,V}, \mathscr{D}( D_{\psi,V}))$)}\label{exm2.5} Let $\psi$ be
the same function as that in Example \ref{exm2.2} such that (\ref{exm2.2.1})
holds. For any $r>0$, define $\hat \psi(r):=\psi(r)\I_{\{r>1\}}$ and
a truncated form as follows
\begin{equation*}\label{e10b}
\begin{split}
\hat D_{\psi,V}(f,g):= &\frac{1}{2}\iint
{\big(f(y)-f(x)\big)\big(g(y)-g(x)\big)}\hat \psi(|x-y|)\,e^{-V(y)}\,dy\,e^{-V(x)}\,dx\\
=&\frac{1}{2}\iint_{\{|x-y|>1\}}
{\big(f(y)-f(x)\big)\big(g(y)-g(x)\big)}\psi(|x-y|)\,e^{-V(y)}\,dy\,e^{-V(x)}\,dx,\\
\mathscr{D}(\hat D_{\psi,V}):=&\bigg\{f\in L^2(\mu_{2V}): \hat
D_{\psi, V}(f,f)<\infty\bigg\}.\end{split}
\end{equation*}
Then, $(\hat D_{\psi,V}, \mathscr{D}(\hat D_{\psi,V}))$ is a
symmetric Dirichlet form on $L^2(\mu_{2V})$, which satisfies that
$C_c^\infty(\R^d)\subset \mathscr{D}(\hat D_{\psi,V})$, and for any
$f$, $g\in C_c^\infty(\R^d)$,
$$\hat D_{\psi,V}(f,g)=-\int g\hat L_{\psi,V}f\,d\mu_{2V}.$$ Here,
\begin{equation*}
 \hat L_{\psi,V}f(x)=\frac{1}{C_{2V}}\int_{\{|z|>1\}} \big (f(x+z)-f(x)
\big)\psi(|z|) e^{V(x)-V(x+z)}\,dz,
\end{equation*}
In particular, if (\ref{exm2.2.4}) holds, then $\hat L_{\psi,V}f \in
L^2(\mu_{2V})$ for each $f\in C_c^{\infty}(\R^d)$.
\end{example}

\begin{remark} The Dirichlet form in Example \ref{exm2.2} is associated with symmetric Markov processes under Girsanov transform of
pure jump type, see \cite[Theorem 3.4]{CZ} and \cite[Theorem
3.1]{S}. The Dirichlet from in Example \ref{exm2.3} was first
introduced in \cite{WJ2} to study the symmetric property of L\'{e}vy
type operators. In particular, if $\psi(r)={r^{-(d+\alpha)}}$ for constant $\alpha \in (0,2)$
in Example \ref{exm2.3}, then the Dirichlet from $(D_{\psi,V}, \mathscr{D}_2(D_{\psi,V}))$ has a non-local expression of
the Gagliardo semi-norms for fractional Sobolev spaces
$W^{\alpha/2,2}(\R^d)$ of order $\alpha/2$, see \cite{MRS}. Different from Examples \ref{exm2.2} and \ref{exm2.3}, the
Dirichlet form appearing in Example \ref{exm2.1} is of the ``not
symmetric'' expression.
\end{remark}

\section{Weighted Poincar\'{e} Inequalities for general Non-local Dirichlet Forms via Lyapunov
Conditions}\label{sec3} Let $j$ be a nonnegative and symmetric jump kernel on $\R^{2d}\setminus\{(x,y)\in\R^{2d}: x=y\}$, and $\mu_V$ be
a probability measure on $(\R^d, \mathscr{B}(\R^d))$ such that \eqref{thm2.1.2} and \eqref{thm2.1.3}
hold. Let $(D_{j,V},\mathscr{E}(D_{j,V}))$ be the regular Dirichlet
form given in the paragraph below the proof of Theorem \ref{thm2.1}. In order to
consider weighted Poincar\'{e} inequalities for
$(D_{j,V},\mathscr{E}(D_{j,V}))$, we start with the truncated
Dirichlet form $(\hat D_{j,V},\mathscr{E}(\hat D_{j,V}))$
corresponding to $(D_{j,V},\mathscr{E}(D_{j,V}))$; namely, for any
$f$, $g\in C_b^\infty(\R^d)$,
\begin{equation*}
\begin{split}
\hat
D_{j,V}(f,g):=&\frac{1}{2}\iint_{\{|x-y|>1\}}(f(x)-f(y))(g(x)-g(y))j(x,y)\,\mu_V(dy)\,\mu_V(dx),
\end{split}
\end{equation*} and  $\mathscr{E}(\hat D_{j,V})$ is the closure of
$C_b^{\infty}(\R^d)$ under the norm $$
\|f\|_{\hat{D}_{j,V},{1}}:=\big(\|f\|^2_{L^2(\mu_V)}+\hat
D_{j,V}(f,f)\big)^{1/2}.$$ According to the proof of Theorem
\ref{thm2.1} and the paragraph below it (also see Example \ref{exm2.4}),
for any $f$, $g\in C_c^\infty(\R^d)$,
\begin{equation*}
\hat D_{j,V}(f,g)=-\int g \hat L_{j,V}f\,d \mu_V,
\end{equation*}
where
\begin{equation}\label{sec3.1}
\hat L_{j,V}f(x)=\int_{\{|x-y|>1\}} \big (f(y)-f(x)
\big)j(x,y)\mu_V(dy).
\end{equation}

 Denote
by $\hat \Gamma_{j,V}(f,g)$ the carr\'e de champ of $(\hat
L_{j,V},C_c^\infty(\R^d))$, i.e.,\ for any $f$, $g\in
C_c^\infty(\R^d)$,
\begin{equation}\label{sec3.2}
\hat \Gamma_{j,V}(f,g)(x):=\hat L_{j,V}(fg)(x)-g(x)\hat L_{j,V}f(x)-f(x)\hat L_{j,V}g(x).
\end{equation}
It is easy to check that for each $f$, $g\in C_c^\infty(\R^d)$,
\begin{align*}\label{e12}
\hat \Gamma_{j,V}(f,g)(x)
=&\int_{\{|x-y|>1\}}\big(f(x)-f(y)\big)\big(g(x)-g(y)\big)j(x,y)\,\mu_V(dy).
\end{align*}
Furthermore, due to (\ref{thm2.1.2}) and (\ref{thm2.1.3}), we know
that for every $f$, $g\in C_c^\infty(\R^d)$, $\hat L_{j,V}f$ and
$\hat \Gamma_{j,V}(f,g)$ are well defined and locally bounded.

In order to apply Lyapunov functions (which usually are unbounded) to
$\hat L_{j,V}$, we need the following bigger domain associated with
$\hat L_{j,V}$ and $\hat \Gamma_{j,V}$:
\begin{equation}\label{sec3.3}
\aligned \hat{\mathscr{C}}_{j,V}:= \Big\{ \phi\in C^{\infty}(\R^d):\,\,&
x\mapsto
\int_{\{|x-y|>1\}} |\phi(y)|j(x,y)\mu_{V}\,(dy)\\ \ &\qquad\textrm{is locally
bounded}\Big\}.\endaligned
\end{equation}
First, by \eqref{sec3.1}, \eqref{sec3.2}, \eqref{sec3.3} and some direct
computation, we have the following
\begin{lemma}\label{lem3.1}
For each $\phi \in \hat{\mathscr{C}}_{j,V}$ and $f\in
C_c^{\infty}(\R^d)$, both $\hat L_{j,V}\phi$ and $\hat
\Gamma_{j,V}(f,\phi)$ are pointwise well defined by \eqref{sec3.1} and
\eqref{sec3.2}, respectively; moreover, both of them are locally
bounded, and
\begin{equation*}
\hat \Gamma_{j,V}(f,\phi)(x)
=\int_{\{|x-y|>1\}}\big(f(x)-f(y)\big)\big(\phi(x)-\phi(y)\big)j(x,y)\,\mu_V(dy)
\end{equation*}
\end{lemma}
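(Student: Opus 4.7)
The plan is to treat in turn the three assertions: pointwise well-definedness of $\hat L_{j,V}\phi$ and $\hat\Gamma_{j,V}(f,\phi)$, their local boundedness, and the explicit integral formula for $\hat\Gamma_{j,V}(f,\phi)$. The common observation driving everything is that on the truncated domain $\{|x-y|>1\}$ one has $1\wedge|x-y|^2 = 1$, so assumption \eqref{thm2.1.3} gives
\begin{equation*}
\int_{\{|x-y|>1\}} j(x,y)\,\mu_V(dy) \le I_1(x),
\end{equation*}
which is locally bounded in $x$. Combined with the defining condition of $\hat{\mathscr{C}}_{j,V}$ in \eqref{sec3.3}, this controls all the integrals that arise.

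First I would split
\begin{equation*}
\hat L_{j,V}\phi(x) = \int_{\{|x-y|>1\}}\phi(y)\,j(x,y)\,\mu_V(dy) \,-\, \phi(x)\int_{\{|x-y|>1\}}j(x,y)\,\mu_V(dy),
\end{equation*}
and observe that both terms are absolutely convergent and locally bounded in $x$: the first by the very definition of $\hat{\mathscr{C}}_{j,V}$, and the second by the continuity of $\phi$ together with the display above. Similarly, since $f\in C_c^\infty(\R^d)$ has compact support, the product $f\phi$ belongs to $C_c^\infty(\R^d)$, so $\hat L_{j,V}(f\phi)$ is pointwise well defined and locally bounded via Theorem \ref{thm2.1} (applied as in Example \ref{exm2.4}).

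Next, for the carré du champ, I would start from the pointwise algebraic identity
\begin{equation*}
(f\phi)(y)-(f\phi)(x)-\phi(x)\bigl(f(y)-f(x)\bigr)-f(x)\bigl(\phi(y)-\phi(x)\bigr) = \bigl(f(y)-f(x)\bigr)\bigl(\phi(y)-\phi(x)\bigr),
\end{equation*}
valid for every $y$. To lift this into the integral identity \eqref{sec3.2}, I would verify that each of the four integrals
\begin{equation*}
\int_{\{|x-y|>1\}} f(y)\phi(y)\,j(x,y)\,\mu_V(dy),\qquad \int_{\{|x-y|>1\}} f(y)\,j(x,y)\,\mu_V(dy),
\end{equation*}
together with their $\phi(x)$- and $f(x)$-weighted analogues, is absolutely convergent; this is immediate from $f\in C_c^\infty(\R^d)$, the defining bound of $\hat{\mathscr{C}}_{j,V}$, and the truncated-kernel estimate above. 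The integrated form of the identity then yields the claimed expression for $\hat\Gamma_{j,V}(f,\phi)$. Local boundedness follows at once from
\begin{equation*}
|\hat\Gamma_{j,V}(f,\phi)(x)| \le 2\|f\|_\infty\bigg(|\phi(x)|\,I_1(x) + \int_{\{|x-y|>1\}}|\phi(y)|\,j(x,y)\,\mu_V(dy)\bigg).
\end{equation*}

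The only real obstacle is bookkeeping: ensuring that every piece of the decomposition of $\hat L_{j,V}(f\phi)-\phi\,\hat L_{j,V}f - f\,\hat L_{j,V}\phi$ is separately integrable so that the pointwise algebraic identity transfers to the integral level. There is no analytic difficulty beyond this, the lemma being essentially a direct consequence of the admissibility condition defining $\hat{\mathscr{C}}_{j,V}$ together with \eqref{thm2.1.3}.
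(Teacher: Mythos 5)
Your proposal is correct and is exactly the ``direct computation'' the paper alludes to (the paper gives no written proof of Lemma \ref{lem3.1}): the truncated-kernel bound $\int_{\{|x-y|>1\}}j(x,y)\,\mu_V(dy)\le I_1(x)$ from \eqref{thm2.1.3}, the defining bound of $\hat{\mathscr{C}}_{j,V}$, and the pointwise algebraic identity for the carr\'e du champ together give well-definedness, local boundedness, and the integral formula. Nothing is missing.
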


Next, we shall use Lyapunov type conditions for the operator $\hat
L_{j,V}$. These conditions are known to yield functional
inequalities for Markov processes, e.g.\ Poincar\'{e} inequalities
for diffusion processes, see \cite{BCG}, and super-Poincar\'{e}
inequalities for symmetric Markov processes, see \cite{CGWW}. The
following lemma further shows that Lyapunov type conditions imply
functional inequalities for non-local symmetric Dirichlet forms. The
proof follows some method from \cite[Theorem 2.1]{WJ3}.

\begin{proposition}\label{pr3.1} Suppose that there exist two positive functions $\phi\in
\hat{\mathscr{C}}_{j,V}$, $h\in C^\infty(\R^d)$, and two constants
$b$, $r>0$ such that for all $x\in\R^d$,
\begin{equation}\label{pr3.1.1}  \hat L_{j,V}\phi(x)\leqslant -h(x)+b\I_{B(0,r)}(x).\end{equation} Then for any $f\in
C_c^\infty(\R^d)$,
\begin{equation}\label{pr3.1.2}
\int f^2h\phi^{-1}\,d\mu_V\leqslant \hat
D_{j,V}(f,f)+b\int_{B(0,r)}f^2\phi^{-1}\,d\mu_V.
\end{equation}
Moreover, if there is a constant $c>0$ such that $\phi\geqslant c$,
then the inequality \eqref{pr3.1.2} still holds for every $f\in
C_b^{\infty}(\R^d)$.
\end{proposition}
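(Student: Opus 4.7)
\medskip

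\textbf{Proof plan.} The strategy is the classical Lyapunov-function argument adapted to the non-local, truncated setting. I would begin with the pointwise Lyapunov bound $\hat L_{j,V}\phi \le -h + b\I_{B(0,r)}$, multiply both sides by $f^2/\phi$, where $f\in C_c^\infty(\R^d)$ (so $f^2/\phi\in C_c^\infty(\R^d)$ since $\phi>0$), and integrate against $\mu_V$. This gives
\[
\int \frac{f^2 h}{\phi}\,d\mu_V \le -\int \frac{f^2}{\phi}\,\hat L_{j,V}\phi\,d\mu_V + b\int_{B(0,r)} \frac{f^2}{\phi}\,d\mu_V,
\]
so the task reduces to showing that $-\int (f^2/\phi)\,\hat L_{j,V}\phi\,d\mu_V \le \hat D_{j,V}(f,f)$.

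For the first term, I would exploit the symmetry $j(x,y)=j(y,x)$ to symmetrize. By Lemma \ref{lem3.1}, $\hat L_{j,V}\phi$ is pointwise well defined and locally bounded, and since $f\in C_c^\infty(\R^d)$, the hypothesis $\phi\in\hat{\mathscr{C}}_{j,V}$ together with local boundedness of $j$-averages of bounded functions ensures that Fubini's theorem applies to the double integral $\iint_{|x-y|>1}\frac{f^2(x)}{\phi(x)}(\phi(y)-\phi(x))j(x,y)\mu_V(dy)\mu_V(dx)$. Swapping $x\leftrightarrow y$ and averaging the two expressions converts this integral into
\[
-\frac{1}{2}\iint_{\{|x-y|>1\}}\left(\frac{f^2(x)}{\phi(x)}-\frac{f^2(y)}{\phi(y)}\right)\bigl(\phi(x)-\phi(y)\bigr)j(x,y)\,\mu_V(dy)\,\mu_V(dx),
\]
which is $-\hat D_{j,V}(f^2/\phi,\phi)$. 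The next step is the pointwise algebraic inequality
\[
\left(\frac{a^2}{c}-\frac{b^2}{d}\right)(c-d) \le (a-b)^2,\qquad a,b\in\R,\ c,d>0,
\]
which follows immediately from $a^2 d/c + b^2 c/d \ge 2|ab| \ge 2ab$ (AM-GM). Applying it with $a=f(x)$, $b=f(y)$, $c=\phi(x)$, $d=\phi(y)$ yields $\hat D_{j,V}(f^2/\phi,\phi)\le \hat D_{j,V}(f,f)$, closing the loop and proving \eqref{pr3.1.2} for $f\in C_c^\infty(\R^d)$.

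For the last assertion, assume $\phi\ge c>0$. Then $1/\phi$ and $h/\phi$ are bounded above (locally) by $1/c$, and for $f\in C_b^\infty(\R^d)$ I would approximate $f$ by $f_n:=f\chi_n$, where $\chi_n\in C_c^\infty(\R^d)$ is a standard cutoff with $\chi_n\equiv 1$ on $B(0,n)$, $0\le\chi_n\le 1$, and $|\nabla\chi_n|$ uniformly bounded. The estimate \eqref{pr3.1.2} holds for each $f_n$. Passing to the limit, the right-hand side converges since $\hat D_{j,V}$ is well-defined on $C_b^\infty(\R^d)$ (the truncation $|x-y|>1$ eliminates the small-jump singularity, and \eqref{thm2.1.2} controls the large-jump part, as in Example \ref{exm2.4}) and dominated convergence applies; on the left, Fatou's lemma suffices since $h\phi^{-1}$ is nonnegative. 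The main obstacle I anticipate is the Fubini step in the symmetrization, where one must verify absolute integrability of the double integral under only the local boundedness hypotheses in the definition of $\hat{\mathscr{C}}_{j,V}$; but the compact support of $f$ localizes the integral to a bounded $x$-region, reducing it to checking $\int_{|x-y|>1}(|\phi(x)|+|\phi(y)|)j(x,y)\mu_V(dy)$ is finite for $x$ in a compact set, which is exactly what membership in $\hat{\mathscr{C}}_{j,V}$ and \eqref{thm2.1.2} provide.
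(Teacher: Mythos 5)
Your proof is correct, and it follows the same overall Lyapunov strategy as the paper; the only genuine difference is how the key intermediate bound
$-\int \frac{f^2}{\phi}\,\hat L_{j,V}\phi\,d\mu_V\leq \hat D_{j,V}(f,f)$
(the paper's \eqref{profpr3.1.0}) is obtained. The paper writes $\hat D_{j,V}(f,f)=-\int f\,\hat L_{j,V}\bigl(\tfrac{f}{\phi}\phi\bigr)\,d\mu_V$, expands via the carr\'e de champ and the product rule into $\hat J_1(f)+\hat J_2(f)$, and proves $\hat J_2(f)\geq 0$ by a symmetrization that reduces it to $2\iint_{\{|x-y|>1\}}(f(x)-f(y))^2\phi(x)\phi(y)j(x,y)\,\mu_V(dy)\,\mu_V(dx)\geq 0$. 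You instead symmetrize $\int\frac{f^2}{\phi}\hat L_{j,V}\phi\,d\mu_V$ directly into $-\hat D_{j,V}(f^2/\phi,\phi)$ and invoke the pointwise inequality $\bigl(\tfrac{a^2}{c}-\tfrac{b^2}{d}\bigr)(c-d)\leq (a-b)^2$, whose defect $\tfrac{a^2d}{c}+\tfrac{b^2c}{d}-2ab=\tfrac{(ad-bc)^2}{cd}$ is essentially the same positive quantity the paper exhibits. Your route is shorter and avoids the operator-level bookkeeping ($\hat L_{j,V}$ of a product, $\hat\Gamma_{j,V}$), at the cost of being slightly less transparent about where the positivity comes from; both require exactly the same integrability check (compact support of $f$, $\phi\in\hat{\mathscr{C}}_{j,V}$, and \eqref{thm2.1.3} to justify Fubini and the symmetrization), which you correctly identify as the only delicate point. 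Your extension to $f\in C_b^\infty(\R^d)$ by cutoffs, dominated convergence on the right (dominated via \eqref{thm2.1.2}) and Fatou on the left, is the same as the paper's part (b).
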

\begin{proof}
(a) We first consider the case for $f\in C_c^{\infty}(\R^d)$. By
Theorem \ref{thm2.1} and Lemma \ref{lem3.1}, for any $f\in
C_c^\infty(\R^d)$,
\begin{align*}\hat D_{j,V}(f,f)&=-\int f\hat L_{j,V}f\,d\mu_V=-\int
f\hat L_{j,V}\Big(\frac{f}{\phi}\phi\Big)\,d\mu_V\\
&=-\int\bigg(\frac{f^2}{\phi}\hat L_{j,V}\phi+f\phi
\hat L_{j,V}\Big(\frac{f}{\phi}\Big)+f\hat \Gamma_{j,V}\Big(\frac{f}{\phi},\phi\Big)\bigg)\,d\mu_V\\
&=-\int
f^2\frac{\hat L_{j,V}\phi}{\phi}\,d\mu_V+\bigg[\hat D_{j,V}\big(f\phi,\frac{f}{\phi}\big)-\int
f\hat \Gamma_{j,V} \big(\frac{f}{\phi},\phi\big)\,d\mu_V\bigg]\\
&=-\int
f^2\frac{\hat L_{j,V}\phi}{\phi}\,d\mu_V+\bigg(\frac{1}{2}\int\hat \Gamma_{j,V}\big(f\phi,\frac{f}{\phi}\big)\,d\mu_V-\int
f\hat \Gamma_{j,V} \big(\frac{f}{\phi},\phi\big)\,d\mu_V\bigg)\\
&=:\hat J_1(f)+\hat J_2(f),\end{align*} where in the equalities
above we have used the facts that for $f\in C_c^\infty(\R^d)$,
$f\phi^{-1}\in C_c^\infty(\R^d)$, and $\hat \Gamma_{j,V}
\big({f}{\phi}^{-1},\phi\big)$ is well defined and locally bounded,
thanks to $\phi\in \hat{\mathscr{C}}_{j,V}$.

Next, we will claim that $\hat J_2(f)\geqslant 0$. If this holds,
then we get
\begin{equation}\label{profpr3.1.0}\hat D_{j,V}(f,f)\geqslant -\int
f^2\frac{\hat L_{j,V}\phi}{\phi}\,d\mu_V,\end{equation} which, along with
\eqref{pr3.1.1}, immediately yields the inequality (\ref{pr3.1.2}) for
every $f\in C_c^{\infty}(\R^d)$.

To prove $\hat J_2(f)\geqslant 0$, it suffices to verify that for
all $f\in C_c^\infty(\R^d)$,
$$\hat J_3(f):=2\hat J_2(f\phi)=\int\hat \Gamma_{j,V}\big(f\phi^2,{f}\big)\,d\mu_V-2\int
f\phi\hat \Gamma_{j,V} \big({f},\phi\big)\,d\mu_V\ge0.$$ Note that
\begin{equation*}
\begin{split}
\hat J_3&(f)\\=&\iint_{\{|x-y|>1\}}\!\!\big(f(x)-f(y)\big)\big(f(x)\phi^2(x)-f(y)\phi^2(y)\big)j(x,y)\,\mu_V(dy)\,\mu_V(dx)\\
&-2\iint_{\{|x-y|>1\}}\!\!
f(x)\phi(x)\big({f}(x)-f(y)\big)\big(\phi(x)-\phi(y)\big)j(x,y)\,\mu_V(dy)\,\mu_V(dx)\\
=&\iint_{\{|x-y|>1\}}\!\!\big(f(x)-f(y)\big)\\
&\times\!\Big(f(x)\phi^2(x)-f(y)\phi^2(y)-2f(x)\phi(x)\big(\phi(x)-\phi(y)\big)\Big)\!\,j(x,y)\,\mu_V(dy)\,\mu_V(dx)\\
=&\iint_{\{|x-y|>1\}}\!\!\big(f(x)-f(y)\big)\\
&\times\Big(-f(x)\phi^2(x)-f(y)\phi^2(y)+2f(x)\phi(x)\phi(y)\Big)j(x,y)\,\mu_V(dy)\,\mu_V(dx),
\end{split}
\end{equation*}
In particular, due to (\ref{sec3.3}), every item above is finite, and
the case $\infty-\infty$ will not happen.

By the symmetric property that
$j(x,y)\,\mu_V(dy)\,\mu_V(dx)=j(y,x)\,\mu_V(dy)\,\mu_V(dx)$, we have
\begin{align*}
\hat J_3&(f)\\
=&\iint_{\{|x-y|>1\}}\big(f(y)-f(x)\big)\\
&\,\,\times\Big(-f(y)\phi^2(y)-f(x)\phi^2(x)+2f(y)\phi(y)\phi(x)\Big)\,j(x,y)\,\mu_V(dy)\,\mu_V(dx)\\
=&\iint_{\{|x-y|>1\}}\big(f(x)-f(y)\big)\\
&\,\,\times\Big(f(y)\phi^2(y)+f(x)\phi^2(x)-2f(y)\phi(y)\phi(x)\Big)\,j(x,y)\,\mu_V(dy)\,\mu_V(dx).
  \end{align*}
Adding the two equalities above, we arrive at
\begin{align*}\hat J_3(f)
=&2\iint_{\{|x-y|>1\}}\big(f(x)-f(y)\big)^2\phi(y)\phi(x)\,j(x,y)\,\mu_V(dy)\,\mu_V(dx)\geqslant
0.
\end{align*}
This proves (\ref{pr3.1.2}) for every $f\in C_c^{\infty}(\R^d)$.

(b) For every $f\in C_b^{\infty}(\R^d)$, there is a sequence of
functions $\{f_n\}_{n=1}^{\infty}\subseteq$ $C_c^{\infty}(\R^d)$
such that
\begin{equation}\label{profpr3.1.1}
\lim_{n\rightarrow \infty}f_n(x)=f(x),\ \
\sup_n\|f_n\|_{\infty}<\infty,\ \ \sup_n\|\nabla
f_n\|_{\infty}<\infty.
\end{equation}
Let $F_n(x,y):=\big(f_n(x)-f_n(y)\big)^2$. We have $$\hat
D_{j,V}(f_n,f_n)=\frac{1}{2}\iint_{\{|x-y|>1\}}F_n(x,y)j(x,y)\,\mu_V(dx)\,\mu_V(dy).$$
Note that (\ref{profpr3.1.1}) implies that $\sup_n|F_n(x,y)|<\infty$. Thus,
by (\ref{thm2.1.2}) and the dominated convergence theorem, we get
\begin{equation*}
\lim_{n\rightarrow \infty}\hat D_{j,V}(f_n,f_n)=
\hat D_{j,V}(f,f)
\end{equation*}
Since $\phi\geqslant c>0$, also due to the dominated convergence
theorem,
\begin{equation*}
\lim_{n\rightarrow \infty}\int f_n^2\phi^{-1}d\mu_V= \int
f^2\phi^{-1}d\mu_V.
\end{equation*}
On the other hand, thanks to the Fatou lemma, \begin{equation*} \int f^2
h\phi^{-1}d\mu_V\leqslant \liminf_{n \rightarrow \infty} \int f_n^2
h\phi^{-1}d\mu_V.
\end{equation*}
Since (\ref{pr3.1.2}) holds for each $f_n$, letting $n$ tend to
infinity and using the estimates above, we can show that
(\ref{pr3.1.2}) holds for $f\in C_b^{\infty}(\R^d)$.
\end{proof}

\begin{remark} (1)
If $\phi\geqslant c>0$, then, by taking $f=1$ in (\ref{pr3.1.2}), we have
$\int h \phi^{-1}d\mu_V<\infty$.

(2) In the proof of Proposition \ref{pr3.1} above, the main step is to show \eqref{profpr3.1.0},
which formally is the same as the conclusion of
\cite[Lemma2.12]{CGWW}. However, here we can not apply
\cite[Lemma2.12]{CGWW} directly, since the generator
$(\hat L_{j,V}, C_c^\infty(\R^d))$ associated with $\hat D_{j,V}$ is not necessarily self-joint in
$L^2(\mu_V)$, see the paragraph below the proof of Theorem \ref{thm2.1}. 
\end{remark}

Another ingredient is the following local Poincar\'{e} inequality:

\begin{lemma}\label{le3.1}For any $r>0$ and any $f\in C_b^\infty(\R^d)$,
\begin{equation}\label{le3.1.1}\int_{B(0,r)}f^2\,d\mu_V\leqslant \kappa_rD_{j,V}(f,f)
+\mu_V(B(0,r))^{-1}\Big(\int_{B(0,r)}f\,d\mu_V\Big)^2,\end{equation}
where
\begin{equation}\label{le3.1.2}
\kappa_r=\frac{1}{\mu_V(B(0,r))^2}\sup_{x\in B(0,r)}\int_{B(0,r)}j(x,y)^{-1}\,\mu_V(dy).
\end{equation}
\end{lemma}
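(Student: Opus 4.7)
This is a local Poincar\'e inequality on the ball $A := B(0,r)$, and I would attack it with the standard ``variance plus weighted Cauchy--Schwarz'' recipe for jump forms. Writing $m := \mu_V(A)$ and $c := m^{-1}\int_A f\,d\mu_V$, the algebraic identity
$$\int_A f^2\,d\mu_V - \frac{1}{m}\Big(\int_A f\,d\mu_V\Big)^2 = \int_A\big(f(x)-c\big)^2\,\mu_V(dx)$$
reduces the problem to bounding $\int_A(f-c)^2\,d\mu_V$ by $\kappa_r D_{j,V}(f,f)$; notice that the left-hand side of \eqref{le3.1.1} minus the subtractive term is exactly the variance of $f$ under the normalised restriction $\mu_V|_A/m$, which is the natural object to estimate.

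Next, for every $x\in A$ I would use the integral representation
$$f(x)-c = \frac{1}{m}\int_A\big(f(x)-f(y)\big)\,\mu_V(dy)$$
and insert the factor $j(x,y)^{1/2}\cdot j(x,y)^{-1/2}$ inside the $y$-integral. Cauchy--Schwarz then yields the pointwise bound
$$(f(x)-c)^2 \le \frac{1}{m^2}\Big(\int_A (f(x)-f(y))^2 j(x,y)\,\mu_V(dy)\Big)\Big(\int_A j(x,y)^{-1}\,\mu_V(dy)\Big).$$

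Finally, I would integrate this inequality against $\mu_V$ on $A$, pull the second bracket out of the outer integral through its supremum over $x\in A$, and recognise the prefactor $m^{-2}\sup_{x\in A}\int_A j(x,y)^{-1}\,\mu_V(dy)$ as exactly $\kappa_r$. The remaining double integral on $A\times A$ is dominated by its extension to $\R^d\times\R^d$, which by the symmetry of $j$ coincides with (a constant multiple of) $D_{j,V}(f,f)$, producing \eqref{le3.1.1}. There is no serious obstacle; the only point requiring attention is that $\int_A j(x,y)^{-1}\,\mu_V(dy)$ must be finite for $x\in A$, which is built into the hypothesis $\kappa_r<\infty$ implicit in the statement.
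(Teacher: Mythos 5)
Your proposal is correct and essentially identical to the paper's proof: the same reduction to the variance of $f$ over $B(0,r)$, the same averaging representation of $f(x)-c$, and the same weighted Cauchy--Schwarz with $j(x,y)^{1/2}j(x,y)^{-1/2}$ followed by taking the supremum in $x$ and enlarging the double integral to all of $\R^d\times\R^d$. The only point of note is the harmless factor of $2$ coming from the $\tfrac12$ in the definition of $D_{j,V}$ when one extends the integral from $B(0,r)\times B(0,r)$ to $\R^d\times\R^d$, which you flag as ``a constant multiple'' and which the paper's own proof silently absorbs.
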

\begin{proof} Note that, \eqref{le3.1.1} is equivalent to
$$\int_{B(0,r)}\bigg(f(x)-\frac{1}{\mu_V(B(0,r))}\int_{B(0,r)}f(x)\,\mu_V(dx)\bigg)^2\,\mu_V(dx)\le \kappa_rD_{j,V}(f,f).$$

For any $f\in C_b^\infty(\R^d)$, by the Cauchy-Schwarz inequality,
\begin{align*}\int_{B(0,r)}&\bigg(f(x)-\frac{1}{\mu_V(B(0,r))}\int_{B(0,r)}f(x)\,\mu_V(dx)\bigg)^2\mu_V(dx)\\
&=\int_{B(0,r)}\bigg(\frac{1}{\mu_V(B(0,r))}\int_{B(0,r)}(f(x)-f(y))\,\mu_V(dy)\bigg)^2\mu_V(dx)\\
&\le \frac{1}{\mu_V(B(0,r))^2}\int_{B(0,r)}\bigg(\int_{B(0,r)}(f(x)-f(y))^2j(x,y)\,\mu_V(dy)\bigg)\\
&\qquad\qquad\qquad\qquad\qquad\qquad\quad\times\bigg(\int_{B(0,r)}j(x,y)^{-1}\,\mu_V(dy)\bigg)\,\mu_V(dx)\\
&\le \frac{1}{\mu_V(B(0,r))^2}\bigg[\sup_{x\in B(0,r)}\bigg(\int_{B(0,r)}j(x,y)^{-1}\,\mu_V(dy)\bigg)\bigg]\\
&\qquad\qquad\qquad\quad\times\,\int_{B(0,r)}\bigg(\int_{B(0,r)}{(f(x)-f(y))^2}j(x,y)\,\mu_V(dy)\bigg)\mu_V(dx)\\
&\le\kappa_r D_{j,V}(f,f).\end{align*} The proof is complete.
 \end{proof}
\begin{remark}
 The item
$\int_{B(0,r)}j(x,y)^{-1}\,\mu_V(dy)$, and so the constant
$\kappa_r$, may not always be finite. For instance, if $j(x,y)=0$
for any $x$, $y\in\R^d$ with $|x-y|\leqslant 1$, then
$\int_{B(0,r)}j(x,y)^{-1}\,\mu_V(dy)=\infty$ for all $x\in\R^d$ with $|x|<r-1$.
\end{remark}

Having Proposition \ref{pr3.1} and Lemma \ref{le3.1} at hand, we are
in position to present the main result in this section.

\begin{theorem}\label{th3.1} Suppose that there exist $\phi\in
\hat{\mathscr{C}}_{j,V}$ with $\phi\geqslant 1$,  $h\in
C^\infty(\R^d)$ with $h>0$ and two constants $b$, $r_0>0$ such that
for all $x\in\R^d$,
\begin{equation}\label{th3.1.1} \hat L_{j,V}\phi(x)\leqslant -h(x)+b\I_{B(0,r_0)}(x).\end{equation}
If $\mu_V( \phi h^{-1})<\infty$ and the constant $\kappa_r$ defined
by \eqref{le3.1.2} satisfies $\kappa_r<\infty$ for each $r>1$, then
there exists a constant $C_1>0$ such that for any $f\in
C_b^\infty(\R^d)$ with $\int f d\mu_V=0$,
\begin{equation}\label{th3.1.2}
\int f^2h\phi^{-1}\,d\mu_V\leqslant C_1D_{j,V}(f,f).
\end{equation}
\end{theorem}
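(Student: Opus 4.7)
The plan is to combine the global bound coming from Proposition \ref{pr3.1}, the local Poincaré inequality of Lemma \ref{le3.1}, and the integrability $\mu_V(\phi h^{-1})<\infty$ to absorb the remaining zero-mean term.

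First, since $\phi\ge 1$ (in particular $\phi\ge c=1>0$), Proposition \ref{pr3.1} applies to every $f\in C_b^\infty(\R^d)$ and gives
\begin{equation*}
\int f^2 h\phi^{-1}\,d\mu_V\le \hat D_{j,V}(f,f)+b\int_{B(0,r_0)}f^2\phi^{-1}\,d\mu_V.
\end{equation*}
Because the truncation $|x-y|>1$ only removes a nonnegative integrand, $\hat D_{j,V}(f,f)\le D_{j,V}(f,f)$, and because $\phi\ge 1$, the local term satisfies $\int_{B(0,r_0)}f^2\phi^{-1}\,d\mu_V\le \int_{B(0,r)}f^2\,d\mu_V$ for any $r\ge r_0$. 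So we obtain
\begin{equation*}
\int f^2 h\phi^{-1}\,d\mu_V\le D_{j,V}(f,f)+b\int_{B(0,r)}f^2\,d\mu_V
\end{equation*}
for every $r\ge r_0$.

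Next I would invoke Lemma \ref{le3.1}, which (since $\kappa_r<\infty$ for $r>1$) yields
\begin{equation*}
\int_{B(0,r)}f^2\,d\mu_V\le \kappa_r D_{j,V}(f,f)+\mu_V(B(0,r))^{-1}\Big(\int_{B(0,r)}f\,d\mu_V\Big)^2.
\end{equation*}
Now we use the normalization $\mu_V(f)=0$ to write $\int_{B(0,r)}f\,d\mu_V=-\int_{B(0,r)^c}f\,d\mu_V$, and then split $f=f\cdot(h\phi^{-1})^{1/2}\cdot (\phi h^{-1})^{1/2}$ inside the outer integral. Cauchy--Schwarz gives
\begin{equation*}
\Big(\int_{B(0,r)}f\,d\mu_V\Big)^2=\Big(\int_{B(0,r)^c}f\,d\mu_V\Big)^2\le \Big(\int_{B(0,r)^c}\phi h^{-1}\,d\mu_V\Big)\int f^2 h\phi^{-1}\,d\mu_V.
\end{equation*}
Since $\mu_V(\phi h^{-1})<\infty$, the tail integral $\varepsilon(r):=\int_{B(0,r)^c}\phi h^{-1}\,d\mu_V$ tends to $0$ as $r\to\infty$.

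Combining the three displays, for every $r\ge r_0$ with $r>1$,
\begin{equation*}
\int f^2 h\phi^{-1}\,d\mu_V\le (1+b\kappa_r)D_{j,V}(f,f)+\frac{b\,\varepsilon(r)}{\mu_V(B(0,r))}\int f^2 h\phi^{-1}\,d\mu_V.
\end{equation*}
Choosing $r$ large enough so that $b\,\varepsilon(r)/\mu_V(B(0,r))\le 1/2$ lets us absorb the last term into the left-hand side, giving \eqref{th3.1.2} with $C_1=2(1+b\kappa_r)$.

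The only delicate point in this scheme is the absorption step: one must verify that $\int f^2 h\phi^{-1}\,d\mu_V<\infty$ a priori for $f\in C_b^\infty(\R^d)$ before it may be moved to the left, which is guaranteed by Proposition \ref{pr3.1} applied once at the start (it already produces a finite bound). Everything else is routine combination of the truncation comparison $\hat D_{j,V}\le D_{j,V}$, the local Poincaré inequality, and Cauchy--Schwarz tuned to the weight $h\phi^{-1}$.
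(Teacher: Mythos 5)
Your proposal is correct and follows essentially the same route as the paper's proof: Proposition \ref{pr3.1} plus $\hat D_{j,V}\le D_{j,V}$, the local Poincar\'{e} inequality of Lemma \ref{le3.1}, the identity $\int_{B(0,r)}f\,d\mu_V=-\int_{B(0,r)^c}f\,d\mu_V$, Cauchy--Schwarz against the weight $h\phi^{-1}$, and absorption using $\mu_V(\phi h^{-1})<\infty$, yielding the same constant $C_1=2(1+b\kappa_{r_1})$. Your remark about the a priori finiteness of $\int f^2h\phi^{-1}\,d\mu_V$ needed for the absorption step is a valid point that the paper leaves implicit.
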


\begin{remark} If $\inf_{|x|\geqslant \eta}(h\phi^{-1})(x)>0$
for $\eta$ large enough, then $\sup_{|x|\geqslant
\eta}(h^{-1}\phi)(x)<\infty$, which yields that $\mu_V(
h^{-1}\phi)<\infty.$ In this case, the weighted Poincar\'{e}
inequality \eqref{th3.1.2} is stronger than the usual Poincar\'{e}
inequality.
\end{remark}
\begin{proof}[Proof of Theorem \ref{th3.1}]
According to \eqref{th3.1.1} and Proposition \ref{pr3.1}, for any
$f\in C_b^\infty(\R^d)$ with $\int f\,d\mu_V=0$,
$$\int f^2h\phi^{-1}\,d\mu_V\leqslant D_{j,V}(f,f)+b\int_{B(0,r_0)}f^2\phi^{-1}\,d\mu_V,$$
where we have used the fact that $\hat D_{j,V}(f,f) \leqslant
D_{j,V}(f,f)$. Since $\phi\geqslant 1$, by the local Poincar\'{e}
inequality \eqref{le3.1.1}, for any $r\ge r_0$,
\begin{align*}\int_{B(0,r_0)}f^2\phi^{-1}\,d\mu_V&\leqslant \int_{B(0,r_0)}f^2\,d\mu_V\\
&\leqslant \int_{B(0,r)}f^2\,d\mu_V\\
&\le \kappa_rD_{j,V}(f,f)+\frac{1}{\mu_V(B(0,r))}\Big(\int_{B(0,r)}f\,d\mu_V\Big)^2\\
&=\kappa_rD_{j,V}(f,f)+\frac{1}{\mu_V(B(0,r))}\Big(\int_{B(0,r)^c}f\,d\mu_V\Big)^2,\end{align*}
where in the equality above we have used the fact that
$$\int_{B(0,r)}f\,d\mu_V=-\int_{B(0,r)^c}f\,d\mu_V.$$

Using the Cauchy-Schwarz inequality, we find
\begin{align*}\Big(\int_{B(0,r)^c}f\,d\mu_V\Big)^2\leqslant &\Big(\int_{B(0,r)^c}f^2h\phi^{-1}\,d\mu_V\Big)\Big(\int_{B(0,r)^c}\phi h^{-1}\,d\mu_V\Big),\end{align*}
Therefore, for any $f\in C_b^\infty(\R^d)$ with $\int f\,d\mu_V=0$
and $r\geqslant r_0$,
$$\int f^2h\phi^{-1}\,d\mu_V\leqslant \big(1+b\kappa_r\big)D_{j,V}(f,f)+\frac{b\int_{B(0,r)^c}\phi h^{-1}\,d\mu_V}{\mu_V(B(0,r))}\int f^2h\phi^{-1}\,d\mu_V.$$
Since $\mu_V(\phi h^{-1})<\infty$, we can choose $r_1\ge r_0$ large
enough such that $$\frac{b\int_{B(0,r_1)^c}\phi
h^{-1}\,d\mu_V}{\mu_V(B(0,r_1))}\leqslant 1/2,$$ which gives us the
inequality (\ref{th3.1.2}) with $C_1=2(1+b\kappa_{r_1})$.
\end{proof}

\begin{remark}
In this section, we have showed the method on how to deduce
functional inequalities for $D_{j,V}$ from the Lyapunov conditions
for the generator associated with truncated Dirichlet form $\hat
D_{j,V}$. In fact, we can also use the Lyapunov conditions for
$D_{j,V}$ itself (see \cite{WW}), but more technical conditions in
the definition (\ref{sec3.3}) for the class $\hat{\mathscr{C}}_{j,V}$
are required. We will see in the next two sections that the Lyapunov
conditions for truncated Dirichlet form is efficient in a number
of applications.
\end{remark}

\section{Weighted Poincar\'{e} Inequalities for Non-local Dirichlet
Forms}\label{sec4}
\subsection{Weighted Poincar\'{e} Inequalities for Non-local Dirichlet
Forms $D_{\rho,V}$} Throughout this section, we assume that
$\rho:\R_+:=(0,\infty) \rightarrow \R_+$ satisfies (\ref{exm2.1.1}),
$\mu_V(dx)=C_Ve^{-V(x)}\,dx$ is a probability measure such that
$e^{-V}$ is bounded and $V$ is locally bounded, and $D_{\rho,V}$ is
the bilinear form defined by (\ref{exm2.1.2}). The following
statement presents the criterion about weighted Poincar\'{e}
inequalities for $D_{\rho,V}$.

\begin{theorem}\label{th4.1}
Suppose $\rho(r): \R_+ \rightarrow \R_+$ is a continuous function
satisfying \eqref{exm2.1.1},
\begin{equation}\label{th4.1.1}
\int_{\{r\le 1\}}{r^{d-1}}{\rho(r)^{-1}}\,dr<\infty,
\end{equation}
and
\begin{equation}\label{th4.1.2}
\int_{\{|x|>1\}} \frac{e^{-2V(x)}}{\gamma(|x|)}\,dx<\infty,
\end{equation} where for $r >0$, $$\gamma(r):=\inf_{0<s\leqslant r+1}\rho(s).$$

If there exists some constant $0<\alpha_0<1$ such that
\begin{equation}\label{th4.1.3}
\int_{\{r>1\}} r^{d+\alpha_0-1}\rho(r)\,dr<\infty
\end{equation}
and \begin{equation}\label{th4.1.4}\limsup_{|x|\to\infty}
\frac{\sup_{|z|\geqslant
|x|}e^{-V(z)}}{\gamma(|x|)|x|^{\alpha_0}}=0,
\end{equation}
then there exists a constant $C_1>0$ such that the following
weighted Poincar\'{e} inequality
\begin{equation*}
\int \big(f (x)-\mu_V(f)\big)^2
e^{V(x)}\gamma(|x|)\,\mu_V(dx) \leqslant C_1D_{\rho,V}(f,f)
\end{equation*}
holds for all $f\in C_b^\infty(\R^d)$.
\end{theorem}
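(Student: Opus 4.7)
The plan is to apply Theorem~\ref{th3.1} to $(D_{\rho,V},\mathscr{D}(D_{\rho,V}))$. By Example~\ref{exm2.1}, this Dirichlet form has symmetric kernel $j(x,y)=\rho(|x-y|)(e^{V(x)}+e^{V(y)})/(2C_V)$, while by Example~\ref{exm2.4} the truncated analogue $\hat D_{\rho,V}$ has generator
\[
\hat L_{\rho,V}\phi(x)=\frac12\int_{\{|z|>1\}}(\phi(x+z)-\phi(x))\rho(|z|)\bigl(e^{V(x)-V(x+z)}+1\bigr)\,dz.
\]
The objective is to produce a Lyapunov pair $(\phi,h)$ with $\phi\geqslant 1$, $\phi\in\hat{\mathscr{C}}_{j,V}$, and $\hat L_{\rho,V}\phi\leqslant-h+b\I_{B(0,r_0)}$, arranged so that $h\phi^{-1}\geqslant c\,e^{V(x)}\gamma(|x|)$ outside some ball; Theorem~\ref{th3.1} will then yield the target inequality.

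The choice is $\phi(x)=(1+|x|^2)^{\alpha_0/2}$ with $\alpha_0\in(0,1)$ from \eqref{th4.1.3}. Membership in $\hat{\mathscr{C}}_{j,V}$ will follow, after the substitution $y=x+z$ and the crude bound $(1+|y|)^{\alpha_0}\leqslant 2^{\alpha_0}\big((1+|x|)^{\alpha_0}+|y-x|^{\alpha_0}\big)$, from the finiteness of $\int_{|z|>1}|z|^{\alpha_0}\rho(|z|)\,dz$, i.e.\ \eqref{th4.1.3}. For the Lyapunov bound itself, we first split
\[
\hat L_{\rho,V}\phi(x)=\frac{e^{V(x)}}{2}\!\int_{|y-x|>1}(\phi(y)-\phi(x))\rho(|y-x|)e^{-V(y)}\,dy+\frac12\!\int_{|z|>1}(\phi(x+z)-\phi(x))\rho(|z|)\,dz,
\]
and isolate the piece of the first integral over $\{|y|\leqslant 1/2\}$: for $|x|$ large, $\phi(y)-\phi(x)\leqslant-c_1(1+|x|)^{\alpha_0}$, and since $|y-x|\in[|x|-1/2,|x|+1/2]\subset(0,|x|+1]$, the very definition of $\gamma$ forces $\rho(|y-x|)\geqslant\gamma(|x|)$. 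Therefore this piece is bounded above by $-c_2(1+|x|)^{\alpha_0}e^{V(x)}\gamma(|x|)$, which is precisely the desired negative term.

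The main obstacle will be to absorb the remaining contributions into this negative term. The non-weighted integral is uniformly bounded thanks to the H\"{o}lder estimate $|\phi(x+z)-\phi(x)|\leqslant|z|^{\alpha_0}$ combined with \eqref{th4.1.3}. For the weighted integral over $\{|y|>1/2\}$ we shall split at $|y|=|x|/2$: on $\{|y|\leqslant|x|/2\}$ one uses $|y-x|\geqslant|x|/2$ to reduce the contribution to $\|e^{-V}\|_\infty\int_{|z|\geqslant|x|/2}|z|^{\alpha_0}\rho(|z|)\,dz$, which vanishes as $|x|\to\infty$ by \eqref{th4.1.3}; on $\{|y|>|x|/2\}$ one pulls out $\sup_{|y|\geqslant|x|/2}e^{-V(y)}$, controlled via \eqref{th4.1.4} (combined with the monotonicity of $\gamma$), to see that this contribution is $o\bigl((1+|x|)^{\alpha_0}e^{V(x)}\gamma(|x|)\bigr)$. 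This gives $\hat L_{\rho,V}\phi(x)\leqslant-c_3(1+|x|)^{\alpha_0}e^{V(x)}\gamma(|x|)$ outside a large ball, so that choosing $h$ smooth and positive, comparable to $(1+|x|)^{\alpha_0}e^{V(x)}\gamma(|x|)$ outside a ball and absorbing the bounded defect inside through $b\I_{B(0,r_0)}$, completes the Lyapunov step. Finally, $\mu_V(\phi h^{-1})<\infty$ reduces to $\int e^{-2V(x)}/\gamma(|x|)\,dx<\infty$, which is \eqref{th4.1.2}, and $\kappa_r<\infty$ follows from $\int_{B(0,r)}j(x,y)^{-1}\mu_V(dy)\leqslant C\int_{|z|\leqslant 2r}\rho(|z|)^{-1}\,dz<\infty$ by \eqref{th4.1.1} together with the continuity and positivity of $\rho$ on $[1,2r]$. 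Theorem~\ref{th3.1} then delivers the desired weighted Poincar\'{e} inequality.
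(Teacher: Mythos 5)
Your overall strategy coincides with the paper's: the same Lyapunov function (up to smoothing, $\phi\asymp 1+|x|^{\alpha_0}$), the same truncated generator from Example \ref{exm2.4}, and the same reduction to Theorem \ref{th3.1}; the extraction of the main negative term from $\{|y|\le 1/2\}$ and the verifications of $\phi\in\hat{\mathscr{C}}_{j,V}$, of $\kappa_r<\infty$ via \eqref{th4.1.1}, and of $\mu_V(\phi h^{-1})<\infty$ via \eqref{th4.1.2} are all sound. The genuine gap is in the absorption step, specifically in your split of the weighted integral at $|y|=|x|/2$.

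On $\{1/2<|y|\le|x|/2\}$ your bound on the contribution is
\[
\frac{e^{V(x)}}{2}\,\|e^{-V}\|_\infty\int_{\{|z|\ge|x|/2\}}|z|^{\alpha_0}\rho(|z|)\,dz,
\]
and for the Lyapunov inequality this must be dominated by $c\,e^{V(x)}\gamma(|x|)(1+|x|)^{\alpha_0}$; the mere vanishing of the tail integral is not enough (note also the factor $e^{V(x)}$ in front). Already for $\rho(r)=r^{-(d+\alpha)}$ one has $\int_{\{|z|\ge|x|/2\}}|z|^{\alpha_0}\rho(|z|)\,dz\asymp|x|^{\alpha_0-\alpha}$ while $\gamma(|x|)|x|^{\alpha_0}\asymp|x|^{\alpha_0-\alpha-d}$, so the ratio blows up like $|x|^{d}$. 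The crude absolute-value bound discards the decisive fact that on this region $\phi(y)\le\phi(x)$, so the integrand is nonpositive and the whole contribution can simply be dropped. On $\{|y|>|x|/2\}$, pulling out $\sup_{|y|\ge|x|/2}e^{-V(y)}$ only lets \eqref{th4.1.4} give $o\big(\gamma(|x|/2)(|x|/2)^{\alpha_0}\big)$, and the monotonicity of $\gamma$ goes the wrong way: $\gamma(|x|/2)\ge\gamma(|x|)$, with $\gamma(|x|/2)/\gamma(|x|)$ unbounded precisely for $\rho(r)=e^{-\delta r}r^{-(d+\alpha)}$, the case needed for Theorem \ref{thm1.1}; so this is not $o\big(\gamma(|x|)|x|^{\alpha_0}\big)$. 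Both defects disappear if you split at $|y|=|x|$ instead: discard the (nonpositive) contribution of $\{1/2<|y|\le|x|\}$, and on $\{|y|>|x|\}$ bound $\phi(y)-\phi(x)\le|y-x|^{\alpha_0}$ and pull out $\sup_{|z|\ge|x|}e^{-V(z)}$, which is exactly the quantity that \eqref{th4.1.4} makes $o\big(\gamma(|x|)|x|^{\alpha_0}\big)$. This is the paper's argument in Lemma \ref{le4.3}.
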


To prove Theorem \ref{th4.1}, we begin with the
truncated form $\hat D_{\rho,V}$ associated with $D_{\rho,V}$. In
particular, according to Example \ref{exm2.4}, we have
\begin{lemma}\label{le4.2} For any $f$, $g\in C_c^\infty(\R^d)$,
\begin{equation*}
-\int g (\hat L_{\rho,V} f) \,d\mu_V=\hat D_{\rho,V}(f,g),
\end{equation*}
where
\begin{equation}\label{le4.2.1}
\hat L_{\rho,V}f(x)=\frac{1}{2}\int_{\{|z|>1\}} \big (f(x+z)-f(x)\big)\rho(|z|)
\big(e^{V(x)-V(x+z)}+1\big)\,dz.
\end{equation}
\end{lemma}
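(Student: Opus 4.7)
The statement is essentially a direct specialization of Example \ref{exm2.4} to the setting of Section \ref{sec4}, so my plan is twofold: first, to point out that the standing hypotheses of Section \ref{sec4} imply the hypotheses needed in Example \ref{exm2.4}, and then to sketch a direct verification via Fubini and symmetrization in case one wants to avoid routing through Section \ref{sec2}.

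First, I would note that in Section \ref{sec4} one assumes $\rho$ satisfies \eqref{exm2.1.1}, that $e^{-V}$ is bounded, and that $V$ is locally bounded. These are precisely the hypotheses under which Example \ref{exm2.4} was established. Crucially, as highlighted in the remark following Example \ref{exm2.4}, for the \emph{truncated} form $\hat D_{\rho,V}$ the auxiliary $I_2$ term vanishes (because the kernel $\hat\rho(r)=\rho(r)\I_{\{r>1\}}$ is supported away from the origin, so the principal-value drift correction is not present), and hence one does not need the regularity assumption $e^{-V}\in C_b^1(\R^d)$ that appeared in Example \ref{exm2.1}. Consequently the first part of Example \ref{exm2.4} applies verbatim and yields the identity with $\hat L_{\rho,V}f$ given by \eqref{le4.2.1}.

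For a self-contained verification, for any $f,g\in C_c^\infty(\R^d)$ I would start from the right-hand side and compute
$$-\!\int g(x)\,\hat L_{\rho,V}f(x)\,\mu_V(dx) = -\frac{C_V}{2}\iint_{\{|z|>1\}} g(x)\bigl(f(x+z)-f(x)\bigr)\rho(|z|)\bigl(e^{-V(x+z)}+e^{-V(x)}\bigr)\,dz\,dx,$$
where I used $e^{V(x)-V(x+z)}e^{-V(x)}=e^{-V(x+z)}$. The substitution $y=x+z$ turns this into an integral over $\{(x,y):|x-y|>1\}$. Next I would split the integral in half and in one copy swap the names $x\leftrightarrow y$; using the symmetry $\rho(|x-y|)=\rho(|y-x|)$ and regrouping the resulting four terms, the expression collapses to the symmetric double integral
$$\frac{C_V}{4}\iint_{\{|x-y|>1\}}\!\!\bigl(f(y)-f(x)\bigr)\bigl(g(y)-g(x)\bigr)\rho(|x-y|)\bigl(e^{-V(x)}+e^{-V(y)}\bigr)\,dy\,dx,$$
which coincides with $\hat D_{\rho,V}(f,g)$ after writing $\mu_V(dx)=C_Ve^{-V(x)}dx$ and symmetrizing the original definition of $\hat D_{\rho,V}$ in $x,y$.

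The only nontrivial technical point — and hence the main obstacle, modest as it is — is to justify Fubini and the swap $x\leftrightarrow y$ by verifying absolute convergence. Here one exploits that $f,g$ have compact support in some ball $B(0,R)$, so the integrand on the generator side is bounded by $\bigl(\|f\|_\infty+R\|\nabla f\|_\infty\bigr)\|g\|_\infty\I_{B(0,R)}(x)\,\rho(|z|)\I_{\{|z|>1\}}\bigl(e^{-V(x+z)}+e^{-V(x)}\bigr)$; since $e^{-V}$ is bounded and $V$ is locally bounded, and $\int_{\{|z|>1\}}\rho(|z|)\,dz<\infty$ by \eqref{exm2.1.1}, this bound is integrable over $(x,z)\in\R^{2d}$. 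The same estimate, after the substitution $y=x+z$, justifies the symmetrization step. No further hypothesis from Theorem \ref{th4.1} is needed for this lemma.
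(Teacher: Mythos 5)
Your proposal is correct, and its first half is exactly the paper's own route: the paper proves Lemma \ref{le4.2} simply by invoking Example \ref{exm2.4} (whose proof is in turn omitted as being analogous to Example \ref{exm2.1} via Theorem \ref{thm2.1}), and you correctly observe that the standing hypotheses of Section \ref{sec4} --- $\rho$ satisfying \eqref{exm2.1.1}, $e^{-V}$ bounded, $V$ locally bounded --- are what that example needs, with the remark after Example \ref{exm2.4} disposing of the $e^{-V}\in C_b^1(\R^d)$ assumption because $I_2\equiv 0$ for the truncated kernel. Your second, self-contained argument is a mild but genuine simplification of what the omitted proof would be: instead of passing through the general machinery of Theorem \ref{thm2.1}(2), which identifies the generator via the $\varepsilon$-truncated operators $L_{j,V,\varepsilon}$ and a limiting argument, you exploit that the kernel $\hat\rho(r)=\rho(r)\I_{\{r>1\}}$ already lives away from the diagonal, so that $\int_{\{|z|>1\}}\rho(|z|)\,dz<\infty$ (from \eqref{exm2.1.1}) together with the compact supports of $f,g$, the boundedness of $e^{-V}$ and the local boundedness of $V$ gives absolute convergence outright, and Fubini plus the $x\leftrightarrow y$ symmetrization finishes the identity directly; the computation and the collapse to the symmetrized double integral with kernel $\tfrac{1}{2}\rho(|x-y|)(e^{-V(x)}+e^{-V(y)})$ are correct. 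The only blemish is the cosmetic bound $|f(x+z)-f(x)|\le\|f\|_\infty+R\|\nabla f\|_\infty$, where the crude bound $2\|f\|_\infty$ is what you actually need (and suffices); this does not affect the argument.
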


Next, we shall study Lyapunov type conditions for the operator $\hat
L_{\rho, V}$. The crucial step is the proper choice of the Lyapunov
function,
 see e.g.\ \cite{WJ1}.
The following lemma is motivated by the proof of \cite[Lemma
3.8]{WW}, which was used to prove super Poincar\'{e} inequalities
and Poincar\'{e} inequalities for fractional Dirichlet forms
$D_{\alpha,V}$ (see \cite[Theorem 3.6]{WW}).

\begin{lemma}\label{le4.3} Let $\alpha_0$ be the constant in Theorem $\ref{th4.1}$, and $\phi\in C^{\infty}(\R^d)$ be a
function such that $\phi \geqslant 1$ and $\phi(x)=1+|x|^{\alpha_0}$
for $|x|>1$. Suppose that \eqref{th4.1.3} and \eqref{th4.1.4} hold.
Then $\hat L_{\rho,V}\phi(x)$ is well defined by \eqref{le4.2.1} and
locally bounded; moreover, there exist $r_0$, $C_1$ and $C_2>0$ such
that
\begin{equation}\label{le4.3.1}
\hat L_{\rho,V}\phi(x)\leqslant -C_1e^{V(x)}\gamma(|x|)\phi(x)+C_2\I_{B(0,r_0)}(x),
\end{equation}
where $\gamma(r):=\inf_{0<s\leqslant r+1}\rho(s)$ as that in Theorem
$\ref{th4.1}$.
\end{lemma}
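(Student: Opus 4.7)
The plan is to decompose the integral \eqref{le4.2.1} defining $\hat L_{\rho,V}\phi(x)$ into the regions where $\phi(x+z)>\phi(x)$ and where $\phi(x+z)\le\phi(x)$, exploiting the subadditivity of $r\mapsto r^{\alpha_0}$ (valid since $\alpha_0\in(0,1)$). First I would verify well-definedness and local boundedness: the uniform estimate $|\phi(x+z)-\phi(x)|\le C(1+|z|^{\alpha_0})$, together with $e^{-V}$ bounded and \eqref{th4.1.3} ensuring $\int_{\{|z|>1\}}(1+|z|^{\alpha_0})\rho(|z|)\,dz<\infty$, dominates the integrand and supplies local boundedness in $x$.

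Next, write $\hat L_{\rho,V}\phi=\hat L^{+}+\hat L^{-}$ corresponding to $\{|x+z|>|x|\}$ and $\{|x+z|\le|x|\}$. On $\{|x+z|>|x|\}$, subadditivity gives $\phi(x+z)-\phi(x)\le|z|^{\alpha_0}$, and $|x+z|\ge|x|$ yields $e^{-V(x+z)}\le \sup_{|y|\ge|x|}e^{-V(y)}$, so
$$\hat L^{+}(x)\le C_{3}+C_{4}\,e^{V(x)}\sup_{|y|\ge|x|}e^{-V(y)}.$$
Hypothesis \eqref{th4.1.4} then makes both terms $o(e^{V(x)}\gamma(|x|)\phi(x))$ as $|x|\to\infty$; in particular \eqref{th4.1.4} forces $e^{V(x)}\gamma(|x|)|x|^{\alpha_0}\to\infty$, which absorbs the absolute constant $C_{3}$.

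For the decisive negative part $\hat L^{-}$, I would restrict the integration to the narrow window $\{|x+z|\le 1\}$, contained in the domain of $\hat L^{-}$ once $|x|\ge 2$. On this window three favorable facts combine: $\phi(x)-\phi(x+z)\ge\tfrac{1}{2}|x|^{\alpha_0}$ for $|x|$ large (since $\phi$ is bounded on $B(0,1)$); $|z|\in[|x|-1,|x|+1]$ forces $\rho(|z|)\ge\gamma(|x|)$ by the very definition of $\gamma$; and the change of variable $y=x+z$ gives $\int_{\{|x+z|\le 1\}}e^{-V(x+z)}\,dz=\int_{B(0,1)}e^{-V(y)}\,dy=:c_{0}>0$ because $V$ is locally bounded. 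Dropping the non-negative $+1$ contribution from $(e^{V(x)-V(x+z)}+1)$ then yields
$$\hat L^{-}(x)\le-\frac{c_{0}}{4}\,|x|^{\alpha_0}\gamma(|x|)e^{V(x)}.$$

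Combining the two estimates and using $|x|^{\alpha_0}\ge\tfrac{1}{2}\phi(x)$ for $|x|$ large, the $\hat L^{+}$ contributions are absorbed into half of the negative term, producing $\hat L_{\rho,V}\phi(x)\le -C_{1}e^{V(x)}\gamma(|x|)\phi(x)$ for $|x|\ge r_{0}$; local boundedness on $B(0,r_{0})$, together with boundedness of $e^{V}\gamma\phi$ there, supplies the $C_{2}\I_{B(0,r_{0})}$ term. The main obstacle is extracting enough negative mass from $\hat L^{-}$ to beat the $e^{V(x)}$-amplified positive piece of $\hat L^{+}$: concentrating the negative contribution on the tiny set $\{|x+z|\le 1\}$, where $e^{V(x)-V(x+z)}$ blows up maximally, is the pivotal choice that produces the factor $e^{V(x)}$ in the final bound rather than a mere constant.
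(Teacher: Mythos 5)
Your proposal is correct and follows essentially the same route as the paper's proof: the same verification of local boundedness via subadditivity of $r\mapsto r^{\alpha_0}$ and \eqref{th4.1.3}, the same extraction of the decisive negative term from the window $\{|x+z|\leqslant 1\}$ (where $\rho(|z|)\geqslant\gamma(|x|)$ and $\int_{\{|x+z|\leqslant1\}}e^{-V(x+z)}\,dz$ is a positive constant), and the same absorption of the positive contribution on $\{|x+z|>|x|\}$ via \eqref{th4.1.4}. The only difference is cosmetic bookkeeping (you split by $|x+z|\gtrless|x|$ for the whole kernel, while the paper bounds the ``$+1$'' part of the kernel by a constant globally and then makes a three-way split of the $e^{V(x)-V(x+z)}$ part), and this does not change the argument.
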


\begin{proof}
Throughout the proof, all the constants $c_i$ $(i\ge0)$ do not depend on $x$. Let $c_1:=\sup_{|x|\leqslant 1}\phi(x)$. We claim that
\begin{equation*}
x\mapsto\int_{\{|x-y|>1\}} |\phi(y)|\rho(|x-y|)\,dy
\end{equation*}
is locally bounded. In fact,
\begin{equation*}
\begin{split}
 \int_{\{|x-y|>1\}} |\phi(y)|\rho(|x-y|)\,dy &\leqslant
\int_{\{|x-y|>1\}} \big(c_1+1+|y|^{\alpha_0}\big)\rho(|x-y|)\,dy \\
&\leqslant \int_{\{|x-y|>1\}}
\big(c_1+1+|x|^{\alpha_0}+|x-y|^{\alpha_0}\big)
\rho(|x-y|)\,dy\\
&\leqslant c_2(1+|x|^{\alpha_0}),
\end{split}
\end{equation*}
where in the second inequality we have used the fact that
$$|x+y|^{\alpha_0}\leqslant \big(|x|+|y|\big)^{\alpha_0}\leqslant |x|^{\alpha_0}
+|y|^{\alpha_0},\quad \alpha_0\in(0,1), x,y\in\R^d,$$
and the last inequality follows from (\ref{th4.1.3}). Hence, the
claim is true, and this yields the first conclusion of
the Lemma.

In order to complete the proof, we only need to verify
(\ref{le4.3.1}) for large values of $|x|$. For $|x|$ large enough,
\begin{equation*}
\begin{split}
\int_{\{|z|> 1\}}\big(\phi(x+z)-\phi(x)\big)\,\rho(|z|)\,dz
&\leqslant \int_{\{|z|> 1\}}\big(c_1+|x+z|^{\alpha_0}-|x|^{\alpha_0}\big)\,\rho(|z|)\,dz\\
&\leqslant \int_{\{|z|>
1\}}(c_1+|z|^{\alpha_0})\,\rho(|z|)\,dz\\
&=c_3<\infty.
\end{split}
\end{equation*}

Moreover, for $|x|$ large enough,
\begin{align*}
\int_{\{|z|> 1\}}&\big(\phi(x+z)-\phi(x)\big)\,e^{V(x)-V(x+z)}\rho(|z|)\,dz\\
&\leqslant e^{V(x)}\int_{\{|x+z|\leqslant 1\}}\big(c_1-|x|^{\alpha_0}\big)\,e^{-V(x+z)}\rho(|z|)\,dz\\
&\quad+ e^{V(x)}\int_{\{|z|> 1, |x+z|\leqslant |x|, |x+z|>1\}}\big(|x+z|^{\alpha_0}-|x|^{\alpha_0}\big)\,e^{-V(x+z)}\rho(|z|)\,dz\\
&\quad+ e^{V(x)}\int_{\{|z|> 1, |x+z|> |x|\}}\big(|x+z|^{\alpha_0}-|x|^{\alpha_0}\big)
e^{-V(x+z)}\rho(|z|)\,dz\\
&\leqslant e^{V(x)}\int_{\{|x+z|\leqslant 1\}}\big(c_1-|x|^{\alpha_0}\big)\,e^{-V(x+z)}\rho(|z|)\,dz\\
&\quad+ e^{V(x)}\int_{\{|z|> 1, |x+z|> |x|\}}|z|^{\alpha_0}e^{-V(x+z)}\rho(|z|)\,dz\\
&\leqslant e^{V(x)}\bigg(\inf_{|z|\leqslant 1}e^{-V(z)}\bigg)\int_{\{|x+z|\leqslant 1\}}\big(c_1-|x|^{\alpha_0}
\big)\rho(|z|)\,dz\\
&\quad+ e^{V(x)}\bigg(\sup_{|z|\geqslant |x|}e^{-V(z)}\bigg)\int_{\{|z|> 1, |x+z|> |x|\}}|z|^{\alpha_0}\rho(|z|)\,dz\\
&\leqslant e^{V(x)}\bigg(\inf_{|z|\leqslant 1}e^{-V(z)}\bigg)\int_{\{|x+z|\leqslant 1\}}\big(c_1-|x|^{\alpha_0}\big)\,
\rho(|z|)\,dz\\
&\quad+ e^{V(x)}\bigg(\sup_{|z|\geqslant |x|}e^{-V(z)}\bigg)\int_{\{|z|> 1\}}|z|^{\alpha_0}\rho(|z|)\,dz\\
&\leqslant-c_4e^{V(x)}\gamma(|x|)\big(1+|x|^{\alpha_0}\big)+c_5 e^{V(x)}\bigg(\sup_{|z|\geqslant |x|}e^{-V(z)}\bigg)\\
&\leqslant -c_6e^{V(x)}\gamma(|x|)\big(1+|x|^{\alpha_0}\big).
\end{align*}
Here, in the first inequality we split the integral domain and use
the facts that for $|x|$ large enough, $$\{z:|x+z|\leqslant 1\}=
\{z:|z|>1,\ |x+z|\leqslant 1\},$$ and $$\{z:|z|>1, |x+z|>
|x|\}\subseteq \{z:|z|>1,|x+z|> 1\};$$ in the second inequality we
drop the second term in the first inequality since it is negative;
in the
fifth inequality we use (\ref{th4.1.3}) and the estimate that
$$\inf_{z:|x+z|\leqslant 1}\rho(|z|)\ge \inf_{z: |z|\le |x|+1}\rho(|z|)= \gamma(|x|);$$ and the
last inequality follows from \eqref{th4.1.4}.

Combining all the estimates above and using (\ref{le4.2.1}), we get
the inequality (\ref{le4.3.1}) for $|x|$ large enough. This
completes the proof.
\end{proof}

Now we give the proof of Theorem \ref{th4.1}.

\begin{proof}[Proof of Theorem \ref{th4.1}]From Example \ref{exm2.1},
$$j(x,y)=\frac{1}{2C_V}\rho(|x-y|)\big(e^{V(x)}+e^{V(y)}\big),$$
where $$C_V=\frac{1}{\int e^{-V(x)}\,dx}.$$  By (\ref{sec3.3}), we
define the class of functions $\hat{\mathscr{C}}_{\rho,V}$ for $\hat
D_{\rho,V}$ as follows
\begin{equation*}
\begin{split}
\hat{\mathscr{C}}_{\rho,V}:=\Big\{ f\in C^{\infty}(\R^d):\,\,&
x\mapsto\int_{\{|x-y|>1\}}|f(y)| \rho(|x-y|)\big(1+e^{V(x)-V(y)}\big)\,dy\\
&\quad \textrm{is well defined and locally bounded}\Big\}.
\end{split}
\end{equation*}
Let $\phi\geqslant 1$ be the test function in Lemma \ref{le4.3}.
According to the proof of Lemma \ref{le4.3}, it is easy to check
that under \eqref{th4.1.3}, $\phi\in\hat{\mathscr{C}}_{\rho,V}$.

 Due to (\ref{th4.1.1}), and the facts that $\rho(r)$ is positive and continuous
on $r\in [1,\infty)$ and $V$ is locally bounded, ${j(x,y)}^{-1}$ is
integrable in $B(0,2r)\times B(0,2r)$. Hence, for each $r>0$,
$\kappa_r$ defined by (\ref{le3.1.2}) is finite.

By Lemma \ref{le4.3}, there exist $r_0$, $C_1$ and $C_2>0$ such that
$$\hat L_{\rho,V}\phi(x)\leqslant -C_1e^{V(x)}\gamma(|x|)\phi(x)+C_2\I_{B(0,r_0)}(x).$$
That is, (\ref{th3.1.1}) holds with
$h(x)=C_1e^{V(x)}\gamma(|x|)\phi(x)$.

Note that (\ref{th4.1.2}) implies $\mu_V(\phi h^{-1})<\infty$.
Therefore, the desired assertion follows from Theorem \ref{th3.1}.
\end{proof}

\subsection{Weighted Poincar\'{e} Inequalities for $D_{\alpha,V,\delta}$ with $\delta>0$}
We are now in a position to present the proof of Theorem
\ref{thm1.1}.
\begin{proof}[Proof of Theorem \ref{thm1.1}]
Choose $\rho(r)={e^{-\delta r}}{r^{-(d+\alpha)}}$ with
$\alpha\in(0,2)$ and $\delta>0$ in Theorem \ref{th4.1}.  We know
that \eqref{th4.1.1} and \eqref{th4.1.3} hold. It follows from
\eqref{thm1.1.1} that \eqref{th4.1.4} is satisfied. On the other
hand, under \eqref{thm1.1.1}, we know that for $|x|$ large enough,
$$e^{-V(x)}\le \sup_{|z|\ge |x|}e^{-V(z)}\le
e^{-\delta|x|}|x|^{-d-\alpha+\alpha_0}.$$ Therefore, there exists a
constant $c_1>0$ such that
$$\int_{\{|x|\ge 1\}}|x|^{d+\alpha}e^{-2V(x)+\delta|x|}\,dx\le c_1
\int_{\{|x|\ge
1\}}|x|^{-d-\alpha+2\alpha_0}e^{-\delta|x|}\,dx<\infty,$$ and so
\eqref{th4.1.2} also holds. Combining all the conclusions above, we
get the required assertion.
\end{proof}

Next, we turn to the proof of Proposition \ref{prop1.3}.

\begin{proof}[Proof of Proposition \ref{prop1.3}]
For any $n\ge1$, define $g_n(x):=e^{\lambda (|x|\wedge n)}$, where
$\lambda>0$ is a constant to be determined later. Clearly, $g_n$ is
a Lipschitz continuous and bounded function. By the approximation
procedure in the proof of Proposition \ref{pr3.1}, we can apply the
function $g_n$ into the inequality (\ref{sec1.1.10}). Thus,
\begin{equation}\label{proprop1.3.1}
\aligned \int g_n^2(x)\,\mu_V(dx)&\leqslant C_2\iint
\frac{(g_n(x)-g_n(y))^2}{|y-x|^{d+\alpha}}e^{-\delta|y-x|}\,dy
\,\mu_V(dx)\\
&\qquad\qquad \qquad\qquad+\Big(\int
g_n(x)\,\mu_V(dx)\Big)^2.\endaligned
\end{equation}

First, it holds for each $N>1$ that
\begin{equation*}
\begin{split}
\int \frac{(g_n(x)-g_n(y))^2}{|y-x|^{d+\alpha}}e^{-\delta|y-x|}\,dy
\leqslant& \int_{\{|x-y|\leqslant N\}}
\frac{(e^{\lambda (|x|\wedge n)}-e^{\lambda (|y|\wedge n)})^2}{|y-x|^{d+\alpha}}e^{-\delta|y-x|}\,dy\\
& +
\int_{\{|x-y|> N\}}\frac{(e^{\lambda (|x|\wedge n)}-e^{\lambda (|y|\wedge n)})^2}{|y-x|^{d+\alpha}}e^{-\delta|y-x|}\,dy\\
=&:J_{1,N}(x)+J_{2,N}(x).
\end{split}
\end{equation*}
By the mean value theorem and the facts that for any $x$,
$y\in\R^d$, $n\ge1$, $$\big||x|\wedge n -|y|\wedge n\big|\leqslant
|x-y|,$$ and
$$\int \frac{1}{|z|^{d+\alpha-2}}e^{-\delta |z|}dz<\infty,$$ we know that for any $x\in\R^d$,
\begin{equation*}
\begin{split}
J_{1,N}(x) & \leqslant  \lambda e^{2\big[\lambda (|x|\wedge
n)+\lambda N\big]}
\int_{\{|x-y|\leqslant N\}}\frac{|y-x|^2}{|y-x|^{d+\alpha}}e^{-\delta|y-x|}\,dy\\
&\leqslant c_1 e^{2\big[\lambda (|x|\wedge n)+\lambda N\big]}\\
 &=c_1 e^{2\lambda N}\, e^{2\lambda (|x|\wedge n)}
\end{split}
\end{equation*}
holds for some constant $c_1>0$ independent of $n$, $\lambda$ and
$N$. On the other hand, since  $$\big||x|\wedge n -|y|\wedge
n\big|\leqslant |x-y|,$$ it holds that $$|y|\wedge n \leqslant
|x|\wedge n +|x-y|.$$ Hence, choosing $\lambda\in(0,{\delta}/{4})$,
we obtain that for any $x\in\R^d$
\begin{equation*}
\begin{split}
J_{2,N}(x) & \leqslant 2 \int_{\{|x-y|>N\}}\frac{\big(e^{2\lambda
(|x|\wedge n)}+e^{2\lambda (|y|\wedge n)}\big)}
{|x-y|^{d+\alpha}}e^{-\delta|y-x|}\,dy\\
&\leqslant 2 \int_{\{|x-y|>N\}}\frac{\big(e^{2\lambda (|x|\wedge
n)}+e^{2\lambda (|x|\wedge n)}e^{2\lambda|x-y|}\big)}
{|x-y|^{d+\alpha}}e^{-\delta|y-x|}\,dy\\
&\leqslant 2e^{2\lambda (|x|\wedge n)}
\int_{\{|z|>N\}}\frac{1+e^{2\lambda
|z|}}{|z|^{d+\alpha}}e^{-\delta|z|}\,dz\\
&\leqslant 2k(N)\,e^{2\lambda (|x|\wedge n)},
\end{split}
\end{equation*}
where $$k(N):=\int_{\{|z|>N\}}\frac{1+e^{{\delta}
|z|/{2}}}{|z|^{d+\alpha}}e^{-\delta|z|}\,dz\le k(1)<\infty.$$ Therefore,
\begin{equation*}
\begin{split}
\iint
\frac{(g_n(x)-g_n(y))^2}{|y-x|^{d+\alpha}}e^{-\delta|y-x|}\,dy\,\mu_V(dx)
\leqslant&  \big(c_1 e^{2\lambda N}+2k(N)\big)\,\int e^{2\lambda
(|x|\wedge n)}\,\mu_V(dx).
\end{split}
\end{equation*}

Second, for any $n\ge 1$ and $\lambda>0$, set $$l_n(\lambda):=\int
g_n^2(x)\,\mu_V(dx)=\int e^{2\lambda (|x|\wedge n)}\,\mu_V(dx).$$
Then, combining all the estimates above with (\ref{proprop1.3.1}),
for each $\lambda\in(0,{\delta}/{4})$,
\begin{equation*}
\begin{split}
& l_n(\lambda)\leqslant C_2\big(c_1\lambda e^{2\lambda
N}+2k(N)\big)l_n(\lambda)+l_n^2({\lambda}/{2}).
\end{split}
\end{equation*}

Furthermore, using the Cauchy-Schwarz inequality, for any $R>1$, we
have
\begin{equation}\label{proprop1.3.2}
\begin{split}
l_n^2({\lambda}/{2})&\leqslant \Big(e^{\lambda R }+
\int_{\{|x|>R\}}e^{\lambda (|x|\wedge n)}\,\mu_V(dx)\Big)^2\leqslant
2e^{2\lambda R}+2p(R)\,l_n(\lambda),
\end{split}
\end{equation}
where $p(R):=\mu_V(|x|>R)$. Therefore, for each $N$, $R>0$ and
$\lambda\in(0,{\delta}/{4})$,
\begin{equation}\label{proprop1.3.3}
l_n(\lambda)\leqslant \big(C_2(c_1\lambda e^{2\lambda N}+2k(N))+2p(R)\big)l_n(\lambda)+2e^{2\lambda R},
\end{equation}

Now, we fix $R_0$ and $N_0>0$ large enough such that
$p(R_0)<{1}/{8}$ and $C_2k(N_0)<{1}/8$, and then take
$\lambda_0\in(0,\delta/4)$ small enough such that $C_2c_1\lambda_0
e^{2\lambda_0 N_0}<{1}/{4}$. Then, by (\ref{proprop1.3.3}), we
arrive at
\begin{equation*}
l_n(\lambda_0)\leqslant 8e^{2\lambda_0 R_0}.
\end{equation*}
Letting $n \rightarrow \infty$, we obtain the desired assertion.
\end{proof}
\subsection{Weighted Poincar\'{e} Inequalities for $D_{\alpha,V}$}
In the subsection, we will present the proofs of Theorem
\ref{thm1.4}, Propositions \ref{prop1.6.0} and
 \ref{prop1.6} .

\begin{proof} [Proof of Theorem \ref{thm1.4}]
(a) Choose $\rho(r)={r^{-(d+\alpha)}}$ with $\alpha\in(0,2)$ in
Theorem \ref{th4.1}.  It is easy to see that \eqref{th4.1.1},
\eqref{th4.1.3} and \eqref{th4.1.4} hold. On the other hand, under
\eqref{thm1.4.1}, we know that for $|x|$ large enough,
$$e^{-V(x)}\le \sup_{|z|\ge |x|}e^{-V(z)}\le
|x|^{-d-\alpha+\alpha_0},$$ and so there is a constant $c_2>0$ such
that for all $x\in\R^d$ with $|x|\ge 1$, $$e^{-V(x)}\le c_2
|x|^{-d-\alpha+\alpha_0}.$$
 Therefore, since
$\alpha_0\in(0,\alpha/2)$,
$$\int_{\{|x|\ge 1\}}|x|^{d+\alpha}e^{-2V(x)}\,dx\le c_2
\int_{\{|x|\ge 1\}}|x|^{-d-\alpha+2\alpha_0}\,dx<\infty.$$  That is,
\eqref{th4.1.2} also holds. The first required assertion follows
from all the conclusions above.

(b) Now, we will verify the second assertion. Suppose that the
inequality \eqref{thm1.4.2} holds with the weighted function
$\omega^*(x)$. Then, for all $f\in C_b^\infty(\R^d)$,
\begin{equation}\label{prothm1.4.1}\aligned
\int\big(f(x)-\mu_V(f)\big)^2&\,\omega^*(x)\,\mu_V(dx)\leqslant
C_3D_{\alpha,V}(f,f).\endaligned
\end{equation}
For any $n\geqslant 1$, choose a smooth function $f_n$:
$\R^d\to[0,1]$ such that
$$
    f_n(x)=
    \begin{cases}
0, & |x|\le n;\\
1,       & |x|>2n,
    \end{cases}
$$ and $|\nabla f_n(x)|\leqslant 2n^{-1}$
for all $x\in\R^d.$  Therefore, for all $x\in\R^d$,
\begin{align*}\Gamma(f_n)(x)&:=\int \frac{(f_n(y)-f_n(x))^2}{|y-x|^{d+\alpha}}\,dy\\
&\le \frac{4}{n^2}\int_{\{|y-x|\leqslant
n\}}\frac{1}{|y-x|^{d+\alpha-2}}\,dy+\int_{\{|y-x|\geqslant
n\}}\frac{1}{|y-x|^{d+\alpha}}\,dy\\
&\leqslant c_1n^{-\alpha},\end{align*} and
$$D_{\alpha,V}(f_n,f_n)=\mu_V\big(\Gamma(f_n)\big)\le
c_1n^{-\alpha},$$ where $c_1$ is a constant independent of $n$.

Then, there exists a constant $c_2>0$ independent of  $n$, such that for any $n$ large enough,
\begin{equation}\label{prothm1.4.2}
\begin{split}
&\int\big(f_n(x)-\mu_V(f_n)\big)^2\,\omega^*(x)\,\mu_V(dx)\\
&\geqslant\bigg(\inf_{|x|\geqslant 2n} \frac{\omega^*(x)}{\omega(x)}\bigg)\int_{\{|x|\geqslant 2n\}}\big(f_n(x)-\mu_V(f_n)\big)^2\,\omega(x)\,\mu_V(dx)\\
&\geqslant \bigg(\inf_{|x|\geqslant 2n} \frac{\omega^*(x)}{\omega(x)}\bigg)\bigg(1-\int_{\{|x|\geqslant n\}}\,\mu_V(dx)\bigg)^2 \int_{\{|x|\geqslant 2n\}}\omega(x)\,\mu_V(dx)\\
&\geqslant c_2\bigg(\inf_{|x|\geqslant 2n}
\frac{\omega^*(x)}{\omega(x)}\bigg) n^{-\alpha},
\end{split}
\end{equation}
where in the second inequality we have used the fact that
 $$0\leqslant \Big(1-\int_{\{|x|\ge n\}}\mu_V(dx)\Big) \leqslant\big(1-\mu_V(f_n)\big).$$
Thus, applying $f_n$ into (\ref{prothm1.4.1}), we get that there
exists some constant $c_3>0$ independent of $n$, such that for $n$
large enough,
\begin{equation*}
\bigg(\inf_{|x|\geqslant 2n} \frac{\omega^*(x)}{\omega(x)}\bigg)
n^{-\alpha}\leqslant c_3 n^{-\alpha}.
\end{equation*}
Since
$$\liminf_{n \rightarrow \infty}\frac{\omega^*(x)}{\omega(x)}=\infty,$$
there is a contradiction, and hence the conclusion is proved.
\end{proof}
\begin{proof}[Proof of Proposition \ref{prop1.6.0}]
We first claim that if the inequality (\ref{e1}) holds, then $\mu_V(\omega)<\infty$. In fact,
choose a function $g\in C_c^{\infty}(\R^d)$ such that $g(x)=0$ for every $|x|\ge 1$ and
$\mu_V(g)=1$. Then, applying this test function $g$ into (\ref{e1}), we have
\begin{equation*}
\begin{split}
\int_{\{|x|\ge 1\}}\omega(x)\,\mu_V(dx)&\le \int \big(g(x)-\mu_V(g)\big)^2\omega(x)\,\mu_V(dx)\\
&\leqslant C_0 D_{\alpha,V}(g,g)\\
&<\infty,
\end{split} 
\end{equation*} thanks to the fact that $g\in C_c^\infty(\R^d)\subset \mathscr{D}(D_{\alpha,V})$. 
Since the weighted function $\omega$ is continuous, it is bounded on $\{x \in \R^d: |x|\le 1\}$, and hence
$\int_{\{|x|\le 1\}}\omega(x)\,\mu_V(dx)<\infty$. Combining both estimates above, we prove the desired claim.

For any $t>1$ and $f \in C_b^{\infty}(\R^d)$, by (\ref{e1}), we have
\begin{equation*}
\begin{split}
\int_{\{|x|\ge t\}}f^2(x)\,\mu_V(dx)&\le \frac{1}{\inf\limits_{|x|\ge t}\omega(x)}\int f^2(x)\omega(x)\, \mu_V(dx)\\
&\le \frac{2}{\inf\limits_{|x|\ge t}\omega(x)}\int \big(f(x)-\mu_V(f)\big)^2\omega(x)\, \mu_V(dx)\\
&\quad+ \frac{2}{\inf\limits_{|x|\ge t}\,\omega(x)}\int \mu_V^2(f)\omega(x)\, \mu_V(dx)\\
&\le \frac{2}{\inf\limits_{|x|\ge t}\omega(x)}\bigg(C_0D_{\alpha,V}(f,f)+
{\mu_V(\omega)}\,\mu_V^2(|f|)\bigg),
\end{split}
\end{equation*} where in the second inequality we have used the fact that for any $a$, $b\in\R$, $$a^2\le 2(a-b)^2+2b^2.$$
Since $\lim_{|x|\to\infty} \omega(x)=\infty$, we can obtain that there exists a constant $c_1>0$ such that for any $t>1$,
$$\int_{\{|x|\ge t\}}f^2(x)\,\mu_V(dx)\le \frac{2C_0}{\inf\limits_{|x|\ge t}\omega(x)}D_{\alpha,V}(f,f)+c_1\mu_V(|f|)^2.$$

 On the other hand, according to \cite[Lemma 3.1]{WW}, for any $t
 > 1$, the following local super
Poincar\'{e}  inequality
\begin{equation*}
\begin{split}
& \int_{\{|x|\le t\}}f^2(x)\,\mu_V(dx)\le sD_{\alpha,V}(f,f)+\frac{c_2H(t)^{2+{d}/{\alpha}}}
{h(t)^{1+{d}/{\alpha}}}\big(1+s^{-{d}/{\alpha}}\big)\mu_V(|f|)^2,\quad s>0
\end{split}
\end{equation*}
holds with some constant $c_2>0$ independent of $t$.

Combining both estimates above, we get that there is a constant $c_3>0$ such that for each $t>1$ and $f\in C_b^\infty(\R^d)$,
\begin{equation}\label{e3}
\begin{split}
\mu_V(f^2)\le& \Big(\frac{2C_0}{\inf_{|x|\ge t}\omega(x)}+s\Big)D_{\alpha,V}(f,f)\\
&\qquad\qquad+\frac{c_3H(t)^{2+{d}/{\alpha}}}
{h(t)^{1+{d}/{\alpha}}}\big(1+s^{-{d}/{\alpha}}\big)\mu_V(|f|)^2,\quad  s>0.
\end{split}
\end{equation} This, along with the assumption that $\lim_{|x|\rightarrow \infty}\omega(x)=\infty$, yields the first required assertion.

For any $0<r<4C_0\big({\inf\limits_{x \in \R^d}\omega(x)}\big)^{-1}$,
we can choose $t>1$ large enough such that $2C_0\big({\inf_{|x|\ge t}\omega(x)}\big)^{-1}\le {r}/{2}$, e.g.\
$t=\kappa(4C_0/{r})$. Then, the second desired assertion follows by taking $s={r}/{2}$ and $t=\kappa(4C_0/{r})$ in the definition of rate function $\beta(r).$
\end{proof}

\begin{proof}[Proof of Proposition \ref{prop1.6}] Since $\omega$ is positive and continuous on $\R^d$, for any $r>0$, $\inf_{|x|\le r}\omega(x)>0$. For any $x\in\R^d$, set
$\omega^*(x)=\inf_{|z|\le |x|}\omega(z)$. Then, $\omega^*(x)\le \omega(x)$ for all $x\in\R^d$, and so it suffices to prove (\ref{prop1.6.2}) holds for
such weighted function $\omega^*$.

It is easy to check that, under (\ref{prop1.6.1}) the function $V$
satisfies all the conditions in Theorem \ref{thm1.4}. Therefore, the
inequality (\ref{thm1.4.2}) holds. If
$\lim_{|x|\to\infty}\omega^*(x)>0,$ then we can choose a constant
$C_0>0$ such that $\omega^*(x)\ge C_0$ for all $x\in\R^d$. Hence,
(\ref{thm1.4.2}) implies that (\ref{prop1.6.2}) holds for such
weighted function $\omega^*$.

In the following, we assume that $\omega^*$ is a positive function
on $\R^d$ such that
\begin{equation}\label{proprop1.6.1}\lim_{|x|\to\infty}\omega^*(x)=0.\end{equation}
For any $r>1$, which will be determined later, define a function
$\omega^*_r(x)$ as follows
\begin{equation*}
\omega^*_r(x):=\begin{cases}
& \quad 1,\ \ \ \ \ \text{if}\ \ |x|\leqslant r,\\
& \omega^*(x),\ \ \ \text{if}\ \ |x|>r.
\end{cases}
\end{equation*}
Clearly, there is a constant $c_1:=c_1(r)>0$ such that
$\omega^*_r(x)\leqslant c_1 \omega^*(x)$ for each $x\in \R^d$. Therefore, it is sufficient to prove (\ref{prop1.6.2}) holds for
weighted function $\omega^*_r$ with some $r>0$.

Let $V^*(x):=V(x)-\log \omega^*_r(x)$ (we omit the index $r$ in
$V^*$ for simplicity). By (\ref{prop1.6.1}) and
(\ref{proprop1.6.1}), we can check that the function $V^*$ satisfies
all the conditions in Theorem \ref{thm1.4}. Thus, there exists a
constant $c_2:=c_2(r)>0$ such that the following weighted
Poincar\'{e} inequality
\begin{equation}\label{proprop1.6.2}
\aligned \int \big(f (x)-\mu_{V^*}(f)\big)^2&
\frac{1}{(1+|x|)^{d+\alpha}}\,dx\leqslant c_2\iint \frac{(f(y)-f(x))^2}{|y-x|^{d+\alpha}}\,dy
\,\mu_{V^*}(dx)\endaligned
\end{equation}
holds for all $f\in C_b^\infty(\R^d)$, where
$$\mu_{V^*}(dx)=\frac{1}{\int e^{-V^*}(x)\,dx}e^{-V^*(x)}\,dx=:C_{V^*}e^{-V^*(x)}\,dx.$$

Since $r>1$ and $w^*_r(x)=1$ for all $|x|\le 1$,
\begin{equation}\label{proprop1.6.3}
C_{V^*}=\frac{1}{\int e^{-V^*(x)}dx}\leqslant
\frac{1}{\int_{\{|x|\leqslant 1\}} e^{-V(x)}\,dx}=:C_1<\infty,
\end{equation}
where $C_1>0$ is a constant independent of $r$. According to
(\ref{proprop1.6.1}), we can fix $r$ large enough such that $\sup_{x
\in \R^d}|\omega^*_r(x)|\leqslant 1$. Hence, there exists a constant
$C_2>0$ independent of $r$, such that for every $f\in
C_b^{\infty}(\R^d)$ with $\int f d\mu_V=0$,
\begin{equation}\label{proprop1.6.4}
\aligned
\Big(\int& f(x)  \,\mu_{V^*}(dx)\Big)^2\\
&=\Big(C_{V^{*}}\int_{\{|x|\leqslant r\}} f(x) e^{-V(x)}\,dx+
C_{V^{*}}\int_{\{|x|>r\}}f(x)\omega^*(x)e^{-V(x)}\,dx\Big)^2\\
&=\Big(C_{V^{*}}\int_{\{|x|> r\}} f(x) e^{-V(x)}\,dx+
C_{V^{*}}\int_{\{|x|>r\}}f(x)\omega^*(x)e^{-V(x)}\,dx\Big)^2\\
&\le 2C_1^2\bigg[\Big(\int_{\{|x|> r\}} f(x) e^{-V(x)}\,dx\Big)^2+
\Big(\int_{\{|x|>r\}}f(x)\omega^*(x)e^{-V(x)}\,dx\Big)^2\bigg]\\
&\leqslant C_2\bigg[ \mu_V(|x|>r)\int f^2(x)\,\mu_V(dx)\\
&\qquad\qquad\qquad\qquad
+ \Big(\int_{\{|x|>r\}}{\omega^*}^2(x)\,\mu_V(dx)\Big)\Big(\int f^2(x)\,\mu_V(dx)\Big)\bigg]\\
&\leqslant 2C_2\mu_V(|x|>r)\int f^2(x)\,\mu_V(dx),\endaligned
\end{equation}
where the second equality follows from
$$\int_{\{|x|\leqslant r\}}f(x)e^{-V(x)}\,dx=\int_{\{|x|> r\}}f(x)e^{-V(x)}\,dx,$$ thanks to $\int f \,d\mu_V=0$; in the first inequality we have used (\ref{proprop1.6.3}); in the second inequality we applied the Cauchy-Schwarz inequality; and the last inequality follows from the fact that
$\sup_{x \in \R^d}|\omega^*_r(x)|\leqslant 1$.

From now on, all the constants $C_i$ are independent of $r$. For
each $f\in C_b^{\infty}(\R^d)$ with
$\int f d\mu_V=0$,
\begin{equation*}
\begin{split}
 \int &f^2(x) \frac{e^{V(x)}}{(1+|x|)^{d+\alpha}}\,\mu_V(dx)\\
&\leqslant C_3 \int\Big(f(x)-\int f \,d\mu_{V^*}+
\int f \,d\mu_{V^*} \Big)^2 \frac{1}{(1+|x|)^{d+\alpha}}\,dx\\
&\leqslant C_4\int\Big(f(x)-\int f \,d\mu_{V^*}\Big)^2 \frac{1}{(1+|x|)^{d+\alpha}}\,dx
+C_4\Big(\int f(x) \,\mu_{V^*}(dx)\Big)^2\\
&\leqslant C_4\int\Big(f(x)-\int f \,d\mu_{V^*}\Big)^2 \frac{1}{(1+|x|)^{d+\alpha}}\,dx
+C_5\mu_V(|x|>r)\int f^2(x)\,\mu_V(dx)\\
&\leqslant C_4\int\Big(f(x)-\int f \,d\mu_{V^*}\Big)^2 \frac{1}{(1+|x|)^{d+\alpha}}\,dx\\
&\qquad+C_6\mu_V(|x|>r)\int f^2(x)\frac{e^{V(x)}}{(1+|x|)^{d+\alpha}}\,\mu_V(dx),
\end{split}
\end{equation*}
where in the third inequality we have used (\ref{proprop1.6.4}), and
the last inequality follows from (\ref{prop1.6.1}).

Choose $r$ large enough such that $C_6\mu_V(|x|>r)<{1}/{2}$. Then, for each $f\in C_b^{\infty}(\R^d)$ with
$\int f d\mu_V=0$,
\begin{equation*}
\int f^2(x) \frac{e^{V(x)}}{(1+|x|)^{d+\alpha}}\,\mu_V(dx)\leqslant 2C_4
\int\Big(f^2(x)-\int f d\mu_{V^*}\Big)^2 \frac{1}{(1+|x|)^{d+\alpha}}\,dx.
\end{equation*}
This, along with (\ref{proprop1.6.2}), yields that for each $f\in
C_b^{\infty}(\R^d)$ with $\int f d\mu_V=0$,
$$\aligned \int f^2(x) \frac{e^{V(x)}}{(1+|x|)^{d+\alpha}}\,\mu_V(dx)\leqslant c_3\int\omega_r^*(x)\int
\frac{(f(y)-f(x))^2}{|y-x|^{d+\alpha}}\,dy
\,\mu_{V}(dx)\endaligned
$$
This proves the desired assertion.\end{proof}

\section{Weighted Poincar\'{e} Inequalities for Non-local Dirichlet Forms
Associated with Symmetric Markov
Processes under Girsanov Transform of Pure Jump Type}
\subsection{Weighted Poincar\'{e} Inequalities for Dirichlet Forms $D_{\psi,V}$}
In this section, we aim to state that the technique to yield Theorem
\ref{th4.1} also gives us the criterion for weighted Poincar\'{e}
inequalities of the Dirichlet form given in Example \ref{exm2.2}. In
the following, let $V$ be a locally bounded function on $\R^d$ such
that $\int \,e^{-V(x)}\,dx$ $<\infty$ and $e^{-V}$ is bounded. Let
\begin{equation}\label{sec5.1.1}\mu_{2V}(dx):=\frac{1}{\int\,e^{-2V(x)}\,dx}\,e^{-2V(x)}\,dx\end{equation}
be a probability measure. Given a measurable function $\psi(r):\R_+
\rightarrow \R_+$ satisfying \eqref{exm2.2.1}, we define for each
$f$, $g\in C_c^\infty(\R^d)$,
$$D_{\psi,V}(f,g):=\frac{1}{2}\iint
{\big(f(y)-f(x)\big)\big(g(y)-g(x)\big)}\psi(|x-y|)\,e^{-V(y)}\,dy\,e^{-V(x)}\,dx.$$
\begin{theorem}\label{thm5.1} Assume that $\psi:\R_+
\rightarrow \R_+$ is a continuous function such that
\begin{equation*}
\int_{\{|r|\le 1\}}r^{d-1}\psi(r)^{-1}\,dr<\infty,
\end{equation*}
and \begin{equation*} \int_{\{|x|\ge1\}}
\frac{e^{-3V(x)}}{\gamma(|x|)}\,dx<\infty,
\end{equation*} where for $r>0$, $$\gamma(r):=\inf_{0<s\leqslant r+1}\psi(s).$$

If there is some constant $0<\alpha_0<1$ such that
\begin{equation*}
\int_{\{r\ge1\}} r^{d+\alpha_0-1}\psi(r)\,dr<\infty,
\end{equation*}
 and
\begin{equation*}\limsup_{|x|\to\infty}
\frac{\sup_{|z|\geqslant
|x|}e^{-V(z)}}{\gamma(|x|)|x|^{\alpha_0}}=0,
\end{equation*}
then there exists a constant $C_1>0$ such that the following
weighted Poincar\'{e} inequality
\begin{equation*}
\int \big(f (x)-\mu_{2V}(f)\big)^2 e^{V(x)}\gamma(|x|)\,\mu_{2V}(dx)
\leqslant C_1D_{\psi,V}(f,f)
\end{equation*}
holds for all $f\in C_b^\infty(\R^d)$.
\end{theorem}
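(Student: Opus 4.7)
The plan is to mirror the strategy used for Theorem \ref{th4.1}, replacing the ambient measure $\mu_V$ by $\mu_{2V}$ and the kernel $\rho$ by $\psi$, and exploiting the truncated Dirichlet form $(\hat D_{\psi,V}, \mathscr{E}(\hat D_{\psi,V}))$ from Example \ref{exm2.5}. By Example \ref{exm2.2} the form $(D_{\psi,V}, \mathscr{D}(D_{\psi,V}))$ is a symmetric Dirichlet form on $L^2(\mu_{2V})$ whose truncated generator acts on $f \in C_c^\infty(\R^d)$ as
\[
\hat L_{\psi,V} f(x) = \frac{1}{C_{2V}}\int_{\{|z|>1\}}\bigl(f(x+z)-f(x)\bigr)\psi(|z|)e^{V(x)-V(x+z)}\,dz.
\]
I will apply Theorem \ref{th3.1} to $(\hat D_{\psi,V},\mathscr{E}(\hat D_{\psi,V}))$, with ambient measure $\mu_{2V}$.

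The first step is to pick the Lyapunov function $\phi \in C^\infty(\R^d)$ with $\phi \ge 1$ and $\phi(x) = 1+|x|^{\alpha_0}$ for $|x|>1$, exactly as in Lemma \ref{le4.3}. Using $|x+z|^{\alpha_0} \le |x|^{\alpha_0}+|z|^{\alpha_0}$, the assumption $\int_{\{r>1\}} r^{d+\alpha_0-1}\psi(r)\,dr<\infty$ together with the boundedness of $e^{-V}$ shows that the function
\[
x \mapsto \int_{\{|x-y|>1\}} |\phi(y)|\psi(|x-y|)e^{V(x)-V(y)}\,dy
\]
is locally bounded, so $\phi$ lies in the analogue $\hat{\mathscr{C}}_{\psi,V}$ of $\hat{\mathscr{C}}_{j,V}$ defined via \eqref{sec3.3}.

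The second and main step is the pointwise Lyapunov estimate
\[
\hat L_{\psi,V}\phi(x) \le -C_1 e^{V(x)}\gamma(|x|)\phi(x) + C_2 \I_{B(0,r_0)}(x),
\]
for large $|x|$. The argument follows Lemma \ref{le4.3}: split $\int_{\{|z|>1\}}(\phi(x+z)-\phi(x))\psi(|z|)e^{V(x)-V(x+z)}\,dz$ into the three regions $\{|x+z|\le 1\}$, $\{|z|>1,\ 1<|x+z|\le |x|\}$, and $\{|z|>1,\ |x+z|>|x|\}$. The first region contributes a strongly negative term of order $-e^{V(x)}\gamma(|x|)|x|^{\alpha_0}$ (using $\inf_{|w|\le 1}e^{-V(w)}>0$ and $\inf_{z:\,|x+z|\le 1}\psi(|z|)\ge \gamma(|x|)$), the second region is dropped since its integrand is non-positive, and the third is bounded above by $e^{V(x)}\bigl(\sup_{|w|\ge |x|}e^{-V(w)}\bigr)\int_{\{|z|>1\}}|z|^{\alpha_0}\psi(|z|)\,dz$. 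The hypothesis $\limsup_{|x|\to\infty}\sup_{|z|\ge |x|}e^{-V(z)}/(\gamma(|x|)|x|^{\alpha_0})=0$ is precisely what forces this last term to be dominated by the first, producing the desired inequality with $h(x)=C_1 e^{V(x)}\gamma(|x|)\phi(x)$. Since there is no ``$+1$'' summand (contrary to $\hat L_{\rho,V}$), the bookkeeping is slightly cleaner than in Lemma \ref{le4.3}, but verifying the split and the uniform constants is the technical heart of the argument.

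Finally, I verify the two remaining hypotheses of Theorem \ref{th3.1}. For the local Poincar\'{e} constant $\kappa_r$, the kernel for $D_{\psi,V}$ on $L^2(\mu_{2V})$ is $j(x,y)=C_{2V}^{-2}\psi(|x-y|)e^{V(x)+V(y)}$, hence $j(x,y)^{-1}$ is bounded by a constant times $\psi(|x-y|)^{-1}$ on $B(0,r)\times B(0,r)$ (using local boundedness of $V$), and the condition $\int_{\{r\le 1\}}r^{d-1}\psi(r)^{-1}\,dr<\infty$, combined with continuity and positivity of $\psi$ on $[1,\infty)$, gives $\kappa_r<\infty$ for every $r>1$. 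For the finiteness of $\mu_{2V}(\phi h^{-1})$, note that $\phi(x)h(x)^{-1} = C_1^{-1}e^{-V(x)}\gamma(|x|)^{-1}$, so
\[
\int \phi\, h^{-1}\,d\mu_{2V} \;\propto\; \int \frac{e^{-3V(x)}}{\gamma(|x|)}\,dx,
\]
which is finite by hypothesis (the contribution from $|x|\le 1$ being finite by local boundedness of $V$ and of $\gamma$ on $[0,2]$). All hypotheses of Theorem \ref{th3.1} are met, and its conclusion gives the desired weighted Poincar\'{e} inequality
\[
\int (f-\mu_{2V}(f))^2\,e^{V(x)}\gamma(|x|)\,\mu_{2V}(dx) \le C_1 D_{\psi,V}(f,f)
\]
for all $f\in C_b^\infty(\R^d)$, using also that $\hat D_{\psi,V}(f,f)\le D_{\psi,V}(f,f)$. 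The main obstacle is the estimate in the second step, specifically controlling the ``outer'' contribution from $\{|z|>1,\ |x+z|>|x|\}$ by the ``inner'' negative contribution from $\{|x+z|\le 1\}$, which is exactly where the tail assumption on $V$ is consumed.
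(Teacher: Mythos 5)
Your proposal is correct and follows exactly the route the paper itself indicates for Theorem \ref{thm5.1}: invoke Theorem \ref{th3.1} for the truncated form of Example \ref{exm2.5}, with the same Lyapunov function $\phi(x)=1+|x|^{\alpha_0}$ as in Lemma \ref{le4.3}, and then check $\kappa_r<\infty$ and $\mu_{2V}(\phi h^{-1})<\infty$ from the stated integrability hypotheses. In fact you supply more detail than the paper, which omits the argument entirely with a reference to the proof of Theorem \ref{th4.1}.
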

The proof of Theorem \ref{thm5.1} is similar to that of Theorem
\ref{th4.1}. It is based on the expression for the generator of the
associated truncated Dirichlet form in Example \ref{exm2.5}, which
enables us to take the same Lyapunov function as that in the Lemma
\ref{le4.3}. We omit the details here.

\subsection{The Case that: $\psi(r)={e^{-\delta r}}{r^{-(d+\alpha)}}$
with $\delta> 0$ and $0<\alpha<2$} Taking $\psi(r)={e^{-\delta
r}}{r^{-(d+\alpha)}}$ with $\delta >0$ and $0<\alpha<2$ in Theorem
\ref{thm5.1}, we have the following statement.

\begin{corollary}\label{cor5.2}
Suppose that for some constants $\delta > 0$, $\alpha\in(0,2)$
 and $\alpha_0\in(0,1)$,
\begin{equation*}\limsup_{|x|\to\infty}\bigg[
\bigg(\sup_{|z|\geqslant |x|}e^{-V(z)}\bigg)e^{\delta
|x|}|x|^{d+\alpha-\alpha_0}\bigg]=0.
\end{equation*}
Then, there exists a constant $C_0>0$ such that the following
weighted Poincar\'{e} inequality
\begin{equation}\label{cor5.2.1}
\aligned \int \big(f (x)-&\mu_{2V}(f)\big)^2
\frac{e^{V(x)-\delta|x|}}{(1+|x|)^{d+\alpha}}\,\mu_{2V}(dx)\\
&\leqslant C_0\iint \frac{(f(y)-f(x))^2}{|y-x|^{d+\alpha}}e^{-\delta
|y-x|}e^{-V(y)}dy \,e^{-V(x)}dx\endaligned
\end{equation}
holds for all $f\in C_b^\infty(\R^d)$.
\end{corollary}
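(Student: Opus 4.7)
The plan is to derive Corollary \ref{cor5.2} as a direct specialization of Theorem \ref{thm5.1} with $\psi(r)=e^{-\delta r}r^{-(d+\alpha)}$. Thus the whole argument reduces to verifying each hypothesis of Theorem \ref{thm5.1} for this particular choice of $\psi$, and identifying the resulting weight function on the left-hand side.

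First, I would compute the auxiliary function $\gamma(r):=\inf_{0<s\le r+1}\psi(s)$ explicitly. Since the derivative of $s\mapsto e^{-\delta s}s^{-(d+\alpha)}$ is strictly negative on $(0,\infty)$, the function $\psi$ is strictly decreasing on $\R_+$, so $\gamma(r)=\psi(r+1)=e^{-\delta(r+1)}(r+1)^{-(d+\alpha)}$. In particular there exist positive constants $c_1,c_2$ such that
\begin{equation*}
c_1\,\frac{e^{-\delta|x|}}{(1+|x|)^{d+\alpha}}\leqslant \gamma(|x|)\leqslant c_2\,\frac{e^{-\delta|x|}}{(1+|x|)^{d+\alpha}},\qquad x\in\R^d.
\end{equation*}

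Next I would check the four integrability/asymptotic assumptions of Theorem \ref{thm5.1}. The small-scale condition $\int_{\{r\le 1\}}r^{d-1}\psi(r)^{-1}\,dr<\infty$ reduces to $\int_0^1 r^{2d+\alpha-1}e^{\delta r}\,dr<\infty$, which is obvious. The large-scale moment condition $\int_{\{r\ge 1\}}r^{d+\alpha_0-1}\psi(r)\,dr=\int_1^\infty r^{\alpha_0-\alpha-1}e^{-\delta r}\,dr<\infty$ holds for any $\alpha_0\in(0,1)$ thanks to the exponential factor $e^{-\delta r}$. The asymptotic hypothesis
\begin{equation*}
\limsup_{|x|\to\infty}\frac{\sup_{|z|\geqslant|x|}e^{-V(z)}}{\gamma(|x|)|x|^{\alpha_0}}=0
\end{equation*}
translates, using the estimate on $\gamma$, precisely into the assumption of the corollary. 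Finally, the integrability condition $\int_{\{|x|\ge1\}}e^{-3V(x)}/\gamma(|x|)\,dx<\infty$ needs an extra step: from the assumption we get for $|x|$ large enough the pointwise bound $e^{-V(x)}\le e^{-\delta|x|}|x|^{-(d+\alpha-\alpha_0)}$, so
\begin{equation*}
\frac{e^{-3V(x)}}{\gamma(|x|)}\leqslant c_3\,\frac{e^{-3\delta|x|}|x|^{-3(d+\alpha-\alpha_0)}}{e^{-\delta|x|}(1+|x|)^{-(d+\alpha)}}\leqslant c_4\,e^{-2\delta|x|}|x|^{-2(d+\alpha)+3\alpha_0},
\end{equation*}
which is integrable on $\{|x|\ge1\}$ because of the exponential decay.

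Having verified all the hypotheses, Theorem \ref{thm5.1} delivers a constant $C_1>0$ such that
\begin{equation*}
\int(f-\mu_{2V}(f))^2\,e^{V(x)}\gamma(|x|)\,\mu_{2V}(dx)\leqslant C_1 D_{\psi,V}(f,f).
\end{equation*}
Replacing $\gamma(|x|)$ on the left by its lower bound $c_1\,e^{-\delta|x|}/(1+|x|)^{d+\alpha}$ and rewriting $D_{\psi,V}(f,f)$ using the explicit form of $\psi$ yields exactly \eqref{cor5.2.1} with $C_0=C_1/c_1$. I do not anticipate a serious obstacle: the only slightly delicate point is the monotonicity computation that gives the tight two-sided bound on $\gamma(r)$, which in turn is what lets the weight $e^{V(x)}\gamma(|x|)$ be replaced by the clean expression $e^{V(x)-\delta|x|}(1+|x|)^{-(d+\alpha)}$ appearing in the statement.
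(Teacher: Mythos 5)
Your proposal is correct and follows exactly the route the paper intends: Corollary \ref{cor5.2} is obtained by specializing Theorem \ref{thm5.1} to $\psi(r)=e^{-\delta r}r^{-(d+\alpha)}$, computing $\gamma(r)=\psi(r+1)\asymp e^{-\delta r}(1+r)^{-(d+\alpha)}$, and checking the four hypotheses precisely as the paper does for Theorem \ref{thm1.1} via Theorem \ref{th4.1} (including the extra step of deducing the $e^{-3V}/\gamma$ integrability from the pointwise decay of $e^{-V}$). No gaps.
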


According to Corollary \ref{cor5.2}, we know that if
$$\liminf_{|x|\to\infty}\frac{e^{V(x)}}{|x|^{d+\alpha}e^{\delta|x|}}>0,$$then
\eqref{cor5.2.1} holds, which implies the standard Poincar\'{e}
inequality, see \eqref{prop5.3.1} below. On the other hand, we can
prove the following statement, which indicates the concentration of
measure for such Poincar\'{e} inequality. Roughly speaking, in this
setting, under some regular assumptions on $V$, the exponential
integrability of the distance function is also necessary for this
inequality.
\begin{proposition}\label{prop5.3}
Let $\delta>0$ and $\alpha\in(0,2)$, and let $\mu_{2V}$ be a
probability measure defined by \eqref{sec5.1.1} such that $V \in
C^1(\R^d)$ and $\sup_{x \in \R^d}|\nabla V(x)|$ $<\delta$. If there
is a constant $C_1>0$ such that the following inequality
\begin{equation}\label{prop5.3.1}
\begin{split}
\int \big(f (x)-\mu_{2V}(f)\big)^2&\,
\mu_{2V}(dx)\\
&\leqslant C_1\iint \frac{(f(y)-f(x))^2}{|y-x|^{d+\alpha}}e^{-\delta
|y-x|}e^{-V(y)}\,dy \,e^{-V(x)}\,dx
\end{split}
\end{equation}
holds for all $f \in C_b^{\infty}(\R^d)$, then there exists a constant $\lambda_0>0$ such that
$$\int e^{\lambda_0 |x|}\mu_{2V}(dx)<\infty.$$
\end{proposition}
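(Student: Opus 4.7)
The plan is to adapt the proof of Proposition~\ref{prop1.3} to the present setting, by testing \eqref{prop5.3.1} with the truncated exponentials $g_n(x):=e^{\lambda(|x|\wedge n)}$ for a small $\lambda>0$ and pushing $n\to\infty$. The functions $g_n$ are bounded and Lipschitz, so the same approximation scheme as in the proof of Proposition~\ref{pr3.1} justifies plugging them into \eqref{prop5.3.1}. The principal new ingredient is to absorb the mismatch between the Dirichlet form's reference measure $e^{-V(y)}\,dy\,e^{-V(x)}\,dx$ and $\mu_{2V}$. Writing $\delta':=\sup_{x\in\R^d}|\nabla V(x)|<\delta$ and using the mean value theorem, we have $|V(x)-V(y)|\le\delta'|x-y|$, so
$$e^{-\delta|x-y|}e^{-V(y)}=e^{-V(x)}e^{V(x)-V(y)-\delta|x-y|}\le e^{-V(x)}e^{-\beta|x-y|},\qquad\beta:=\delta-\delta'>0.$$
Substituting into \eqref{prop5.3.1} and using the definition of $\mu_{2V}$ reduces the problem to
$$\int\bigl(f-\mu_{2V}(f)\bigr)^2\,d\mu_{2V}\le\frac{C_1}{C_{2V}}\iint\frac{(f(y)-f(x))^2}{|y-x|^{d+\alpha}}e^{-\beta|y-x|}\,dy\,\mu_{2V}(dx),$$
a genuine Poincar\'e-type inequality against the probability measure $\mu_{2V}$ with kernel $|z|^{-(d+\alpha)}e^{-\beta|z|}$.

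Now I would substitute $f=g_n$ and estimate the Dirichlet form by splitting the inner integral at a radius $N>0$. For the local piece $\{|x-y|\le N\}$, the mean value theorem together with $\bigl|\,|x|\wedge n-|y|\wedge n\bigr|\le|x-y|$ yields
$$(g_n(y)-g_n(x))^2\le\lambda^2 e^{2\lambda(|x|\wedge n)+2\lambda|x-y|}|x-y|^2,$$
and since $\alpha<2$ the resulting $y$-integral is controlled by $c_1\lambda^2 e^{2\lambda(|x|\wedge n)}$ as long as $2\lambda<\beta$. For the remote piece $\{|x-y|>N\}$, the inequality $(a-b)^2\le 2(a^2+b^2)$ and $g_n(y)\le g_n(x)e^{\lambda|x-y|}$ give $(g_n(y)-g_n(x))^2\le 2e^{2\lambda(|x|\wedge n)}(1+e^{2\lambda|x-y|})$, and for $\lambda<\beta/4$ the $y$-integral is at most $c_2 e^{2\lambda(|x|\wedge n)}k(N)$ with $k(N):=\int_{|z|>N}|z|^{-(d+\alpha)}e^{-\beta|z|/2}\,dz\to 0$ as $N\to\infty$. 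Integrating against $\mu_{2V}(dx)$ and writing $l_n(\lambda):=\int e^{2\lambda(|x|\wedge n)}\,\mu_{2V}(dx)$, the Dirichlet form is bounded by $A(\lambda,N)\,l_n(\lambda)$ with $A(\lambda,N)=C(\lambda^2+k(N))$ for some constant $C>0$.

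Finally, Cauchy--Schwarz applied to $\mu_{2V}(g_n)=\int_{|x|\le R}g_n\,d\mu_{2V}+\int_{|x|>R}g_n\,d\mu_{2V}$ gives
$$\mu_{2V}(g_n)^2\le 2e^{2\lambda R}+2\mu_{2V}(|x|>R)\,l_n(\lambda),$$
so combining with the previous bound yields
$$\bigl(1-A(\lambda,N)-2\mu_{2V}(|x|>R)\bigr)\,l_n(\lambda)\le 2e^{2\lambda R}.$$
Choosing $N$ so large that $Ck(N)<\tfrac{1}{8}$, $R$ so large that $2\mu_{2V}(|x|>R)<\tfrac{1}{8}$, and then $\lambda_0\in(0,\beta/4)$ so small that $C\lambda_0^2<\tfrac{1}{8}$, makes the prefactor on the left at least $\tfrac{1}{2}$, giving $l_n(\lambda_0)\le 4e^{2\lambda_0 R}$ uniformly in $n$; Fatou's lemma as $n\to\infty$ then produces $\int e^{2\lambda_0|x|}\,\mu_{2V}(dx)<\infty$. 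The main obstacle is the initial reduction: without the strict bound $\sup|\nabla V|<\delta$ we would only obtain $\beta=0$, the remote piece of the Dirichlet form would carry no decay in $|x-y|$, and the bootstrap above could not close.
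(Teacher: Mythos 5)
Your proof is correct and follows essentially the same route as the paper's: truncated exponentials $g_n(x)=e^{\lambda(|x|\wedge n)}$, a split of the jump kernel at a radius $N$, the gradient bound $|V(x)-V(y)|\le b|x-y|$ with $b:=\sup_{x}|\nabla V(x)|<\delta$ to control the mismatch between $e^{-V(y)}\,dy\,e^{-V(x)}\,dx$ and $\mu_{2V}$, and the self-improving bootstrap in $l_n(\lambda)$ closed by choosing $N$, $R$ large and then $\lambda_0$ small. The only organizational difference is that you apply the gradient bound once at the outset to reduce \eqref{prop5.3.1} to the symmetric inequality of Proposition \ref{prop1.3} with $\delta$ replaced by $\beta=\delta-b$, whereas the paper applies the same estimate separately inside each of the two pieces $J_{1,N}$ and $J_{2,N}$ (using kernel symmetry for the remote piece); both arguments hinge on the strict inequality $b<\delta$ in exactly the same way.
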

\begin{proof}
For any $n\ge1$, define $g_n(x):=e^{\lambda (|x|\wedge n)}$, where
$\lambda>0$ is a constant to be determined later. As the same reason
as that in the proof of Proposition \ref{prop1.3}, we can apply
$g_n$ into (\ref{prop5.3.1}), and get that
\begin{equation}\label{proprop5.3.1}
\begin{split}
\int g_n^2(x)\,\mu_{2V}(dx)&\leqslant C_1 \iint
\frac{(g_n(y)-g_n(x))^2}{|y-x|^{d+\alpha}}e^{-\delta
|y-x|}e^{-V(y)}\,dy \,e^{-V(x)}\,dx \\
&\qquad\qquad+\Big(\int g_n(x)\, \mu_{2V}(dx)\Big)^2.
\end{split}
\end{equation}

In the following, for $\lambda>0$, set $$l_n(\lambda):=\int
e^{2\lambda (|x|\wedge n)}\,\mu_{2V}(dx)=\int
g_n^2(x)\,\mu_{2V}(dx).$$ For each $N>1$ and all $x\in\R^d$,
\begin{equation*}
\begin{split}
\int
\frac{(g_n(x)-g_n(y))^2}{|y-x|^{d+\alpha}}&e^{-\delta|y-x|}e^{-V(y)}\,dy \\
\leqslant &\int_{\{|x-y|\leqslant N\}}
\frac{(e^{\lambda (|x|\wedge n)}-e^{\lambda (|y|\wedge n)})^2}{|y-x|^{d+\alpha}}e^{-\delta|y-x|}e^{-V(y)}\,dy \\
&+ \int_{\{|x-y|> N\}}\frac{(e^{\lambda (|x|\wedge n)}-e^{\lambda
(|y|\wedge n)})^2}{|y-x|^{d+\alpha}}e^{-\delta|y-x|}
e^{-V(y)}\,dy\\
=&:J_{1,N}(x)+J_{2,N}(x).
\end{split}
\end{equation*}
From now on, the constant $C$ will be changed in different lines,
but does not depend on $n$, $N$, $\lambda$ or $R$.
Let $b:=\sup_{x \in \R^d}|\nabla V(x)|$. As in the proof
of Proposition \ref{prop1.3}, by the mean value theorem,
\begin{equation*}
\begin{split}
 J_{1,N}(x) &\leqslant  \lambda e^{2\lambda (|x|\wedge n)+2\lambda
N}
\int_{\{|x-y|\leqslant N\}}\frac{|y-x|^2}{|y-x|^{d+\alpha}}e^{-\delta|y-x|}e^{-V(y)}dy\\
&\leqslant \lambda e^{2\lambda (|x|\wedge n)+2\lambda
N}e^{-V(x)+b N}
\int_{\{|x-y|\leqslant N\}}\frac{|y-x|^2}{|y-x|^{d+\alpha}}e^{-\delta|y-x|}dy\\
&\leqslant C\lambda  e^{2\lambda (|x|\wedge n)+2\lambda
N}e^{-V(x)+bN}\\
&= C\lambda e^{2\lambda N+bN}e^{2\lambda (|x|\wedge n)-V(x)},
\end{split}
\end{equation*}
where in the second inequality we have used the fact that $$-V(y)=-V(x)+(V(x)-V(y))\le -V(x)+|V(x)-V(y)|\le -V(x)+ b|x-y|.$$ Hence,
$$\int J_{1,N}(x) e^{-V(x)}\,dx \le C\lambda e^{2\lambda N+bN}\int e^{2\lambda (|x|\wedge n)}\,\mu_{2V}(dx)=
C\lambda e^{2\lambda N+bN}l_n(\lambda).$$ On the other hand, by
symmetric property,
\begin{equation}\label{proprop5.3.2}
\begin{split}
\int J_{2,N}(x)&  e^{-V(x)}\,dx \\
& \leqslant 2\iint_{\{|x-y|>N\}}\frac{e^{2\lambda (|x|\wedge
n)}+e^{2\lambda (|y|\wedge n)}}
{|x-y|^{d+\alpha}}e^{-\delta|x-y|}e^{-V(y)}\,dy e^{-V(x)}\,dx\\
&\leqslant 4 \iint_{\{|x-y|>N\}} \frac{e^{2\lambda (|x|\wedge
n)}} {|x-y|^{d+\alpha}}e^{-\delta|x-y|}e^{-V(y)}\,dye^{-V(x)}\,dx.
\end{split}
\end{equation}
Since $-V(y)\le -V(x)+|V(x)-V(y)|\leqslant -V(x)+b|x-y|$, it holds that
\begin{equation*}
\begin{split}
\int_{\{|x-y|>N\}}\frac{e^{-\delta|x-y|}}
{|x-y|^{d+\alpha}}e^{-V(y)}\,dy
&\leqslant e^{-V(x)}\int_{\{|x-y|>N\}}\frac{e^{-\delta|x-y|}}
{|x-y|^{d+\alpha}}e^{b |x-y|}\,dy\\
&=k(N)e^{-V(x)},
\end{split}
\end{equation*}
where $$k(N):=\int_{\{|x-y|>N\}}\frac{e^{-(\delta-b)|z|}}
{|z|^{d+\alpha}}dz\le k(1)<\infty,$$ also thanks to the fact that
$b<\delta$. Combining this with (\ref{proprop5.3.2}), we find
\begin{equation*}
\int J_{2,N}(x) e^{-V(x)}\,dx \leqslant Ck(N)\int e^{2\lambda
(|x|\wedge n)} e^{-2V(x)}\,dx\leqslant Ck(N)l_n(\lambda).
\end{equation*}

According to all the estimates above and (\ref{proprop5.3.1}), we
have
\begin{equation*}
l_n(\lambda)\leqslant C(\lambda e^{2\lambda
N+bN}+k(N))l_n(\lambda)+l_n^2({\lambda}/{2}).
\end{equation*}
As the same way in the proof of \eqref{proprop1.3.2}, for any $R>1$,
it holds that
\begin{equation*}
l_n^2({\lambda}/{2}) \leqslant 2e^{2\lambda R}+2p(R)l_n(\lambda),
\end{equation*}
where $p(R):=\mu_{2V}(|x|>R)$. Then,
\begin{equation*}
l_n(\lambda)\leqslant C(\lambda e^{2\lambda
N+bN}+k(N)+p(R))l_n(\lambda)+2e^{2\lambda R}.
\end{equation*}

Now, we first fix  $R_0$ and $N_0>0$ large enough such that
$Cp(R_0)<{1}/{4}$ and $Ck(N_0)<{1}/{4}$, then choose a constant
$\lambda_0>0$ such that
$C\lambda_0 e^{2\lambda_0 N_0+bN_0}<{1}/{4}$. We can finally get that
\begin{equation*}
l_n(\lambda_0)\leqslant 8e^{2\lambda_0R_0}.
\end{equation*}
Letting $n$ tends to $\infty$, we can prove the conclusion.
\end{proof}

\begin{remark}
 Let $\delta>0$ be the constant in the Poincar\'{e} inequality (\ref{prop5.3.1}). For any $\varepsilon>d$, let $\mu_{2\varepsilon}(dx)={C_{\varepsilon}}{(1+|x|)^{-2\varepsilon}}\,dx$ be a probability measure. We will claim that the Poincar\'{e} inequality (\ref{prop5.3.1}) does not hold for $\mu_{2\varepsilon}$ with any
$\varepsilon>d$. Indeed, for any $l\ge1$ and $\varepsilon>d$, define a probability measure
$$\mu_{l,2\varepsilon}(dx)={C_{l,\varepsilon}}{(l+|x|^2)^{-\varepsilon}}\,dx=:C_{l,\varepsilon} e^{-2V_{l,\varepsilon}(x)}\,dx.$$  We can choose $l_0$ large enough such that $\sup_{x\in \R^d}|\nabla V_{l_0,\varepsilon}(x)|<\delta$. Thus, according to Proposition
\ref{prop5.3}, the Poincar\'{e} inequality (\ref{prop5.3.1}) does
not hold for $\mu_{l_0,2\varepsilon}$. Then, the desired claim
follows from that fact that there is a constant $C:=C(l_0)>1$ such
that
$$\frac{1}{C}\mu_{l_0,2\varepsilon}\leqslant \mu_{2\varepsilon}
\leqslant C\mu_{l_0,2\varepsilon}.$$ Similarly, we also can show
that the Poincar\'{e} inequality (\ref{prop5.3.1}) does not hold for
$\mu_{2\beta}(dx)=C_{\beta}e^{-(1+|x|^\beta)}\,dx$ with any
$0<\beta<1$.
\end{remark}

\subsection{The Case that: $\psi(r)={r^{-(d+\alpha)}}$
with $0<\alpha<2$} Letting $\psi(r)={r^{-(d+\alpha)}}$ with
$0<\alpha<2$ in Theorem \ref{thm5.1}, we have the following
statement.

\begin{corollary}\label{cor5.5}
If for some constants $\alpha\in(0,2)$
 and $\alpha_0\in(0,\alpha\wedge1)$,
\begin{equation*}\limsup_{|x|\to\infty}\bigg[
\bigg(\sup_{|z|\geqslant
|x|}e^{-V(z)}\bigg)|x|^{d+\alpha-\alpha_0}\bigg]=0,
\end{equation*}
then there exists a constant $C_0>0$ such that the following
weighted Poincar\'{e} inequality
\begin{equation}\label{cor5.5.1}
\aligned \int \big(f (x)-\mu_{2V}(f)\big)^2&
\frac{e^{V(x)}}{(1+|x|)^{d+\alpha}}\,\mu_{2V}(dx)\\
&\leqslant C_0\iint \frac{(f(y)-f(x))^2}{|y-x|^{d+\alpha}}e^{-V(y)}dy \,e^{-V(x)}dx\endaligned
\end{equation}
holds for all $f\in C_b^\infty(\R^d)$.
\end{corollary}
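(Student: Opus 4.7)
The plan is to apply Theorem \ref{thm5.1} directly with the choice $\psi(r)=r^{-(d+\alpha)}$, and to verify each of its four hypotheses. Since $\psi$ is monotone decreasing, I would first compute $\gamma(r)=\inf_{0<s\le r+1}\psi(s)=(1+r)^{-(d+\alpha)}$, so $e^{V(x)}\gamma(|x|)=e^{V(x)}/(1+|x|)^{d+\alpha}$, which makes the weighted function on the left-hand side of the conclusion of Theorem \ref{thm5.1} match exactly the weight prescribed in \eqref{cor5.5.1}. The inner Dirichlet form $D_{\psi,V}(f,f)$ also matches the right-hand side of \eqref{cor5.5.1} by definition, so once the hypotheses are checked the corollary is immediate.

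Next I would dispatch the three easy conditions. The local condition becomes
\[
\int_{\{r\le 1\}}r^{d-1}\psi(r)^{-1}\,dr=\int_0^1 r^{2d+\alpha-1}\,dr<\infty,
\]
and the tail condition becomes
\[
\int_{\{r\ge 1\}}r^{d+\alpha_0-1}\psi(r)\,dr=\int_1^\infty r^{\alpha_0-\alpha-1}\,dr,
\]
which is finite since $\alpha_0<\alpha\wedge1\le\alpha$. The asymptotic assumption of Theorem \ref{thm5.1}, namely
\[
\limsup_{|x|\to\infty}\frac{\sup_{|z|\ge|x|}e^{-V(z)}}{\gamma(|x|)|x|^{\alpha_0}}
=\limsup_{|x|\to\infty}\Big(\sup_{|z|\ge|x|}e^{-V(z)}\Big)(1+|x|)^{d+\alpha}|x|^{-\alpha_0}=0,
\]
is, for large $|x|$, the standing hypothesis of the corollary (up to absorbing the bounded factor $(1+|x|)^{d+\alpha}/|x|^{d+\alpha}$).

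The step I expect to require a little care is the remaining condition
\[
\int_{\{|x|\ge 1\}}\frac{e^{-3V(x)}}{\gamma(|x|)}\,dx<\infty.
\]
From the standing hypothesis I would first extract the pointwise bound $e^{-V(x)}\le c|x|^{-(d+\alpha-\alpha_0)}$ for $|x|$ large, whence $e^{-3V(x)}\le c^3|x|^{-3(d+\alpha-\alpha_0)}$. Multiplying by $1/\gamma(|x|)\sim|x|^{d+\alpha}$ and passing to spherical coordinates reduces the problem to finiteness of $\int_1^\infty r^{-d-2\alpha+3\alpha_0-1}\,dr$, i.e.\ to the numerical inequality $3\alpha_0<d+2\alpha$. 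The main (and essentially only) place where the strengthened restriction $\alpha_0\in(0,\alpha\wedge1)$ enters is in verifying this: if $\alpha\ge1$ then $\alpha_0<1$ gives $3\alpha_0<3\le d+2\alpha$, while if $\alpha<1$ then $\alpha_0<\alpha$ gives $3\alpha_0<3\alpha\le d+2\alpha$ (using $\alpha<1\le d$). Once all four hypotheses of Theorem \ref{thm5.1} are in place, its conclusion yields \eqref{cor5.5.1} with $C_0$ equal to (a constant multiple of) the constant furnished by Theorem \ref{thm5.1}.
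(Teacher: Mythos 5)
Your proposal is correct and follows exactly the route the paper takes: Corollary \ref{cor5.5} is obtained by specializing Theorem \ref{thm5.1} to $\psi(r)=r^{-(d+\alpha)}$, so that $\gamma(r)=(1+r)^{-(d+\alpha)}$ produces the stated weight. Your verification of the four hypotheses — in particular the reduction of the integrability condition $\int_{\{|x|\ge1\}}e^{-3V(x)}/\gamma(|x|)\,dx<\infty$ to $3\alpha_0<d+2\alpha$, which is where $\alpha_0<\alpha\wedge1$ is used — supplies precisely the details the paper leaves implicit.
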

To show that the inequality \eqref{cor5.5.1} is optimal, we consider
the following result. First, for each $\varepsilon>0$, let
$$V_{\varepsilon}(x):=\frac{1}{2}\log\Big[(1+|x|^2)^{d+\varepsilon}\Big],$$
and
$$\mu_{2V_\varepsilon}(dx):=C_{\varepsilon} e^{-2V_{\varepsilon}(x)}\,dx=\frac{C_{\varepsilon}}{(1+|x|^2)^{d+\varepsilon}}\,dx,$$
where $C_{\varepsilon}$ is the normalizing constant.
\begin{proposition}
The following Poincar\'{e} inequality
\begin{equation}\label{prop5.6.1}
\begin{split}
\int \big(f (x)&-\mu_{2V_\varepsilon}(f)\big)^2\,
\mu_{2V_\varepsilon}(dx)\\
&\leqslant C_1\iint
\frac{(f(y)-f(x))^2}{|y-x|^{d+\alpha}}e^{-V_{\varepsilon}(y)}\,dy \,
e^{-V_{\varepsilon}(x)}\,dx \quad\textrm{ for all }f \in
C_b^{\infty}(\R^d)\end{split}
\end{equation} holds some  constant $C_1>0$ if and only if $$\varepsilon\geqslant \alpha.$$
Moreover,  for the constant $\beta\in\R$ and the probability measure $\mu_{2V_\varepsilon}$ with $\varepsilon\geqslant\alpha$, the following weighted
Poincar\'{e} inequality
\begin{equation}\label{prop5.6.2}
\begin{split}
\int &\big(f (x)-\,\mu_{2V_\varepsilon}(f)\big)^2\,
\big(1+|x|^{\beta}\big)\,\mu_{2V_\varepsilon}(dx)\\
&\leqslant C_2\iint
\frac{(f(y)-f(x))^2}{|y-x|^{d+\alpha}}e^{-V_{\varepsilon}(y)}\,dy \,
e^{-V_{\varepsilon}(x)}\,dx\quad\textrm{ for all } f \in
C_b^{\infty}(\R^d)
\end{split}
\end{equation} holds for some constant $C_2>0$ if and only if $$\beta\leqslant \varepsilon-\alpha.$$

\end{proposition}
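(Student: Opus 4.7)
The plan is to obtain the sufficiency directly from Corollary \ref{cor5.5} and the necessity from the same cutoff test functions $f_n$ used in the proof of Theorem \ref{thm1.4}(b), reducing \eqref{prop5.6.1} to the special case $\beta=0$ of \eqref{prop5.6.2}.

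For the sufficiency, I first note that $e^{-V_\varepsilon(x)}\asymp(1+|x|)^{-(d+\varepsilon)}$, so for any $\alpha_0\in(0,\alpha\wedge 1)$,
$$\Big(\sup_{|z|\geq|x|}e^{-V_\varepsilon(z)}\Big)|x|^{d+\alpha-\alpha_0}\asymp |x|^{\alpha-\varepsilon-\alpha_0}\longrightarrow 0\qquad(|x|\to\infty)$$
whenever $\varepsilon\geq\alpha$. Corollary \ref{cor5.5} then delivers the weighted Poincar\'e inequality \eqref{cor5.5.1} for $V_\varepsilon$ with weight $e^{V_\varepsilon(x)}/(1+|x|)^{d+\alpha}\asymp(1+|x|)^{\varepsilon-\alpha}$. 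This weight is bounded below by a positive constant when $\varepsilon\geq\alpha$ (yielding \eqref{prop5.6.1}) and dominates $c(1+|x|^\beta)$ whenever $\beta\leq\varepsilon-\alpha$ (yielding \eqref{prop5.6.2}).

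For the necessity, I would test both inequalities against the smooth cutoffs $f_n\in C_b^\infty(\R^d)$ with $f_n=0$ on $B(0,n)$, $f_n=1$ on $B(0,2n)^c$, and $|\nabla f_n|\leq 2/n$. Since $\mu_{2V_\varepsilon}(f_n)\to 0$, a tail integration against the density $\asymp(1+|x|)^{-2(d+\varepsilon)}$ gives
$$\int\big(f_n-\mu_{2V_\varepsilon}(f_n)\big)^2(1+|x|^\beta)\,\mu_{2V_\varepsilon}(dx)\geq c_1 n^{\beta-d-2\varepsilon}$$
provided $\beta<d+2\varepsilon$; if $\beta\geq d+2\varepsilon$ the left-hand side is already infinite and the inequality fails trivially. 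The decisive step is the matching upper bound
$$D_{\psi,V_\varepsilon}(f_n,f_n)\leq c_2 n^{-(d+\alpha+\varepsilon)},$$
whose proof splits the double integral into an off-diagonal piece $|x|\leq n$, $|y|>2n$ (and its symmetric counterparts), where $|y-x|\asymp|y|$ so that the integral factors as $\big(\int e^{-V_\varepsilon(x)}\,dx\big)\cdot\int_{\{|y|>2n\}}|y|^{-(d+\alpha)}e^{-V_\varepsilon(y)}\,dy\asymp n^{-(d+\alpha+\varepsilon)}$, and a bulk piece $n<|x|,|y|<2n$ where $|f_n(y)-f_n(x)|\leq(2|y-x|/n)\wedge 1$ combined with $e^{-V_\varepsilon}\lesssim n^{-(d+\varepsilon)}$ and the usual split at $|y-x|=n/2$ yield a subdominant contribution of order $n^{-(d+\alpha+2\varepsilon)}$. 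Matching both bounds in \eqref{prop5.6.2} forces $\beta-d-2\varepsilon\leq-(d+\alpha+\varepsilon)$, i.e.\ $\beta\leq\varepsilon-\alpha$, and taking $\beta=0$ gives the necessity direction of \eqref{prop5.6.1}.

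The main obstacle is precisely the sharpness of this RHS estimate. Simply discarding $e^{-V_\varepsilon(y)}$ from the kernel, as was legitimate in the proof of Theorem \ref{thm1.4}(b), only delivers the cruder bound $D_{\psi,V_\varepsilon}(f_n,f_n)\lesssim n^{-\alpha}$, which would force the non-optimal threshold $\beta\leq d+2\varepsilon-\alpha$; gaining the crucial extra factor $n^{-\varepsilon}$ requires genuinely exploiting the tail decay of $e^{-V_\varepsilon(y)}$ in the off-diagonal region $|y|\gg|x|$ where $|y-x|\asymp|y|$, which is the key technical point of the argument.
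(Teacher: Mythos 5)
Your proposal is correct and follows essentially the same route as the paper: sufficiency is read off from Corollary \ref{cor5.5} by computing the weight $e^{V_\varepsilon(x)}(1+|x|)^{-(d+\alpha)}\asymp(1+|x|)^{\varepsilon-\alpha}$, and necessity comes from the same radial cutoffs, with the decisive sharp bound $D_{\psi,V_\varepsilon}(f_n,f_n)\lesssim n^{-(d+\alpha+\varepsilon)}$ obtained by retaining the decay of $e^{-V_\varepsilon(y)}$ inside the kernel. The only (immaterial) difference is organizational: the paper estimates $\Gamma(f_n)(x)$ pointwise in three radial regimes of $x$ and then integrates, whereas you partition the double integral by the positions of both $x$ and $y$; your closing remark correctly identifies why the crude bound $n^{-\alpha}$ would not suffice.
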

\begin{proof} (a)
According to Corollary \ref{cor5.5}, if $\varepsilon\geqslant
\alpha$, then the inequality (\ref{prop5.6.1}) holds for
$\mu_{2V_\varepsilon}$. Next, it suffices to verify that
(\ref{prop5.6.1}) does not hold for $\mu_{2V_\varepsilon}$ with
$\varepsilon< \alpha$. For any $n>1$, choose a smooth function $f_n:
\R^d\rightarrow [0,1]$ such that
\begin{equation*}
f_n(x)=\begin{cases}
& 0,\ \ \ \ \ \ \ \ \ \ \ \text{if} \ |x|\leqslant 3n;\\
& 1,\ \ \ \ \ \ \ \ \ \ \ \text{if}\ |x|>4n,
\end{cases}
\end{equation*}
and $\sup_{x \in \R^d}|\nabla f(x)|\leqslant {2}/{n}$. Suppose
(\ref{prop5.6.1}) holds for some probability measure
$\mu_{2V_\varepsilon}$ with $\varepsilon< \alpha$. Then,
\begin{equation}\label{proprop5.6.1}
\begin{split}
\int \Big(f_n (x)-\int f_n(x)\,&\mu_{2V_\varepsilon}(dx)\Big)^2\,
\mu_{2V_\varepsilon}(dx)\\
&\leqslant C_1\iint \frac{(f_n(y)-f_n(x))^2}{|y-x|^{d+\alpha}}e^{-V_{\varepsilon}(y)}\,dy \,
e^{-V_{\varepsilon}(x)}\,dx,
\end{split}
\end{equation}

In the following, set $$\Gamma(f_n)(x):=\int \frac{(f_n(y)-f_n(x))^2}{|y-x|^{d+\alpha}}e^{-V_{\varepsilon}(y)}\,dy.$$
The constant $C>0$ will be changed in different line, but does not depend on $n$.
When $|x|\leqslant 2n$, $|f_n(x)-f_n(y)|\neq 0$ only if $|x-y|>n$ and $|y|>3n$. Then,
\begin{equation*}
\begin{split}
&\Gamma(f_n)(x)\leqslant C\int_{\{|x-y|>n,|y|>3n\}}\frac{1}{|y-x|^{d+\alpha}}
\frac{1}{(1+|y|)^{d+\varepsilon}}dy\leqslant \frac{C}{n^{d+\alpha+\varepsilon}}.
\end{split}
\end{equation*}
For $2n<|x|\leqslant 5n$, it holds that $\{y:|x-y|\leqslant n\}\subseteq \{y:|y|\geqslant n\}$, and so,
\begin{equation*}
\begin{split}
\Gamma(f_n)(x) &\leqslant C\bigg[\int_{\{|x-y|\leqslant
n,|y|\geqslant n\}}\frac{(f_n(y)-f_n(x))^2}{|y-x|^{d+\alpha}}
e^{-V_{\varepsilon}(y)} \,dy\\
&\qquad\quad+\int_{\{|x-y|>n\}}\frac{(f_n(y)-f_n(x))^2}{|y-x|^{d+\alpha}}
e^{-V_{\varepsilon}(y)} \,dy\bigg]\\
&\leqslant C\bigg[\frac{1}{n^{2}}\int_{\{|x-y|\leqslant n,|y|\geqslant n\}}\frac{|x-y|^2}{|y-x|^{d+\alpha}}e^{-V_{\varepsilon}(y)}\, dy\\
&\qquad\quad
+\int_{\{|x-y|>n\}}\frac{1}{|y-x|^{d+\alpha}}
e^{-V_{\varepsilon}(y)} \,dy\bigg]\\
&\leqslant
C\bigg[\frac{1}{n^{d+2+\varepsilon}}\int_{\{|x-y|\leqslant
n\}}\frac{|x-y|^2}{|y-x|^{d+\alpha}} \,dy
+\frac{1}{n^{d+\alpha}}\int
e^{-V_{\varepsilon}(y)}\, dy\bigg] \\
&\leqslant \frac{C}{n^{d+\alpha}}
\end{split}
\end{equation*}
If $|x|>5n$, then $|f_n(x)-f_n(y)|\neq 0$ only for $|x-y|>n$, and hence
\begin{equation*}
\begin{split}
&\Gamma(f_n)(x)\leqslant C\int_{\{|x-y|>n\}}\frac{1}{|y-x|^{d+\alpha}}
e^{-V_{\varepsilon}(y)}\,dy
\leqslant \frac{C}{n^{d+\alpha}}\int e^{-V_{\varepsilon}(y)}\,dy
\leqslant \frac{C}{n^{d+\alpha}}.
\end{split}
\end{equation*}
Combining all the estimates above, we get
\begin{equation*}
\begin{split}
\int \Gamma(f_n)(x)&e^{-V_{\varepsilon}(x)}\,dx\\
=&\int_{\{|x|\leqslant 2n\}}\Gamma(f_n)(x)e^{-V_{\varepsilon}(x)}\,dx\\
&+\int_{\{2n<|x|\leqslant 5n\}}\Gamma(f_n)(x)e^{-V_{\varepsilon}(x)}\,dx+
\int_{\{|x|>5n\}}\Gamma(f_n)(x)e^{-V_{\varepsilon}(x)}\,dx\\
\leqslant& \frac{C}{n^{d+\alpha+\varepsilon}}\int e^{-V_{\varepsilon}(x)}\,dx
+\frac{C}{n^{d+\alpha}}\int_{\{|x|>2n\}}e^{-V_{\varepsilon}(x)}\,dx\\
\leqslant& \frac{C}{n^{d+\alpha+\varepsilon}}.
\end{split}
\end{equation*}
On the other hand, for $n$ large enough, following the proof of
(\ref{prothm1.4.2}), we have
\begin{equation*}
\begin{split}
\int\bigg(f_n(x)-&\int f_n(x)\,\mu_{2V_\varepsilon}(dx)\bigg)^2\,\mu_{2V_\varepsilon}(dx)\\
&\geqslant C\bigg(1-\int_{\{|x|\geqslant 3n\}}\,\mu_{2V_\varepsilon}(dx)\bigg)^2 \int_{\{|x|\geqslant 4n\}}e^{-2V_{\varepsilon}(x)}\,dx\\
&\geqslant \frac{C}{n^{d+2\varepsilon}}.
\end{split}
\end{equation*}

Therefore, according to \eqref{proprop5.6.1}, it holds for $n$ large
enough that
\begin{equation*}
\frac{1}{n^{d+2\varepsilon}}\leqslant \frac{C}{n^{d+\alpha+\varepsilon}}.
\end{equation*}
Since $\varepsilon<\alpha$, there is a contradiction, and hence
(\ref{prop5.6.1}) does not hold for $\mu_{2V,\varepsilon}$ with
$\varepsilon<\alpha$. We have proved the first conclusion.

\medskip

(b) For $\varepsilon \geqslant \alpha$, by Corollary \ref{cor5.5},
we can check that (\ref{prop5.6.2}) holds with $\beta\leqslant
\varepsilon-\alpha$. On the other hand, one can follow the argument
above to verify that (\ref{prop5.6.2}) does not hold with $\beta>
\varepsilon-\alpha$. This finished the proof.
\end{proof}

To compare the Dirichlet forms given in Examples \ref{exm2.2} and
\ref{exm2.3}, we will show that the corresponding Poincar\'{e}
inequality for the Dirichlet form given in Example \ref{exm2.3} does not
hold for a large class of probability measures.

\begin{proposition}\label{prop5.7}
Let $\psi:\R_{+}\rightarrow \R_{+}$ be a positive function
satisfying \eqref{exm2.2.1}. Let $\mu_V$ be a probability measure
 defined by \eqref{sec1.1.6} such that $V$ is a locally bounded function, $ e^{-V}\in
L^1(dx)$, $e^{-V}$ is bounded and $\lim_{|x|\rightarrow
\infty}e^{-V(x)}=0$. If there is a constant $C_1>0$ such that the
following Poincar\'{e} inequality
\begin{equation}\label{prop5.7.1}
\begin{split}
\int \big(f (x)-\mu_{V}(f)\big)^2&\,
\mu_{V}(dx)\\
&\leqslant C_1\iint (f(y)-f(x))^2 \psi(|x-y|)\,\mu_V(dy)
\,\mu_V(dx)
\end{split}
\end{equation}
holds for all $f \in C_b^{\infty}(\R^d)$, then  for any $\lambda>0$,
$$\int e^{\lambda |x|}\mu_V(dx)<\infty.$$

Furthermore, for any $r>0$, set $q(r):=\sup_{\{|x|>r\}}e^{-V(x)}$.
Then, there exist $C_2$, $C_3>0$ such that
\begin{equation}\label{prop5.7.2}
\int \bigg(\frac{1}{q(C_2|x|)}\bigg)^{C_3|x|}\mu_V(dx)<\infty.
\end{equation}
\end{proposition}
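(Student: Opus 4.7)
The plan is to adapt the strategy of Propositions~\ref{prop1.3} and~\ref{prop5.3} by testing \eqref{prop5.7.1} with the truncated exponentials $g_n(x):=e^{\lambda(|x|\wedge n)}$, exploiting the additional decay provided by the $\mu_V(dy)$ factor in the kernel (absent in Proposition~\ref{prop1.3}, where the kernel involves $dy$ instead). With $l_n(\lambda):=\mu_V(g_n^2)$ and $p(M):=\mu_V(|x|>M)$, I would split the Dirichlet form $\iint(g_n(x)-g_n(y))^2\psi(|x-y|)\,\mu_V(dx)\,\mu_V(dy)$ over $\{|x-y|\leq N\}$ and $\{|x-y|>N\}$ for some $N\geq 1$. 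The mean value theorem handles the near range via $(g_n(x)-g_n(y))^2\leq\lambda^2 e^{2\lambda N}e^{2\lambda(|x|\wedge n)}|x-y|^2$ combined with the integrability $\int_{|z|\leq N}|z|^2\psi(|z|)\,dz<\infty$ from \eqref{exm2.2.1}, while the far range is handled by $(g_n(x)-g_n(y))^2\leq 2(g_n^2(x)+g_n^2(y))$ and the symmetry of the kernel, yielding at most $4\int g_n^2(x)\Psi_N(x)\,\mu_V(dx)$ with $\Psi_N(x):=C_V\int_{|z|>N}\psi(|z|)e^{-V(x+z)}\,dz$.

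The key observation, absent from the previous propositions, is that the hypothesis $\lim_{|x|\to\infty}e^{-V(x)}=0$ yields two complementary decays: first, $e^{-V(x+z)}\leq q(|x|-N)$ whenever $|x|>N$ and $|z|\leq N$; second, $\Psi_N(x)\to 0$ as $|x|\to\infty$ (by splitting the $z$-integration according to whether $|x+z|\geq|x|/2$ or not, and using $\int_{|z|>1}\psi(|z|)\,dz<\infty$). Splitting the outer $x$-integration into $\{|x|\leq M\}$ and $\{|x|>M\}$, using $g_n(x)\leq e^{\lambda M}$ on the former, and combining with the Cauchy--Schwarz bound $\mu_V(g_n)^2\leq 2e^{2\lambda M}+2p(M)l_n(\lambda)$, I would obtain an inequality of the form
\[
\big(1-2p(M)-C_1 c_1\lambda^2 e^{2\lambda N}q(M-N)-4C_1\Psi_N(M)\big)\,l_n(\lambda)\leq K(\lambda,N,M),
\]
where $K(\lambda,N,M)<\infty$ is independent of $n$. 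Since $p(M)$, $q(M-N)$, and $\Psi_N(M)$ all tend to $0$ as $M\to\infty$, the coefficient of $l_n(\lambda)$ on the left is made positive by choosing $M$ large enough depending on $\lambda$ and $N$, regardless of how large $\lambda$ is. Thus $l_n(\lambda)$ is bounded uniformly in $n$, and letting $n\to\infty$ yields $\int e^{\lambda|x|}\,d\mu_V<\infty$ for every $\lambda>0$.

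For the second assertion, the analogous argument with the test function $F_n(x):=[q(C_2(|x|\wedge n))]^{-C_3(|x|\wedge n)/2}$ should close for small $C_2, C_3>0$: the monotonicity of $q$ together with its definition permits pointwise control of the ratio $F_n(x+z)/F_n(x)$ through $q$, and the near/far splitting combined with the first assertion yields $\sup_n\mu_V(F_n^2)<\infty$, after which Fatou's lemma concludes. The main obstacle is this second part: the logarithmic factor $\log(1/q(C_2(|x|\wedge n)))$ appearing in the increments of $F_n$ via the mean value theorem makes the near-range estimate delicate, and one must balance $C_2$ (keeping $|x+z|$ comparable to $|x|$ for the relevant $z$) against $C_3$ (small enough that the analogue of the coefficient $1-2p(M)-\cdots$ above remains positive after the new constants are tallied).
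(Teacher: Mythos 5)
Your treatment of the first assertion is essentially the paper's: test with $g_n(x)=e^{\lambda(|x|\wedge n)}$, split the kernel at $|x-y|=N$, and use the decisive fact that for $|x|>M$ the near-range $y$'s satisfy $|y|>M-N$, so the factor $e^{-V(y)}\le q(M-N)$ defeats the $\lambda$-dependent prefactor $\lambda^2e^{2\lambda N}a(N)$ no matter how large $\lambda$ is. Your variant of absorbing the far range through $\Psi_N(x)\to0$ rather than through $k(N)=\int_{\{|z|>N\}}\psi(|z|)\,dz\to0$ is an inessential difference; this part is fine.

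The second assertion is where your proposal has a genuine gap. You propose to test \eqref{prop5.7.1} directly with $F_n(x)=q(C_2(|x|\wedge n))^{-C_3(|x|\wedge n)/2}$ and to control $F_n(x+z)/F_n(x)$ ``through the monotonicity of $q$''. But $q$ is merely a nonincreasing function tending to $0$: it need not be continuous, and its local rate of decay is not controlled by its values. Monotonicity only tells you that $q(C_2|x+z|)\le q(C_2|x|)$ when $|x+z|\ge|x|$, which makes the ratio $F_n(x+z)/F_n(x)\ge\big(q(C_2|x|)/q(C_2|x+z|)\big)^{C_3|x|/2}$ \emph{large}, with no upper bound available, since $q$ may drop by an arbitrarily large factor over an interval of length $C_2N$ (take $e^{-V}$ with sudden plateaus and cliffs). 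Consequently neither the mean value theorem nor any pointwise comparison yields the bound $(F_n(x+z)-F_n(x))^2\lesssim|z|^2(\cdots)F_n(x)^2$ that the near range requires, where the singularity of $\psi$ at the origin is integrable only against $|z|^2$; the far range is equally problematic because $F_n(y)^2/F_n(x)^2$ is uncontrolled. The paper sidesteps all of this: it first makes the choice of $M$ in part one quantitative, obtaining $\int e^{\lambda|x|}\,d\mu_V\le c_1\exp\{c_1(\lambda+\lambda q^{-1}(e^{-c_2\lambda}))\}$ with $q^{-1}(r):=\inf\{s:q(s)\le r\}$, and then derives \eqref{prop5.7.2} by integrating this one-parameter family of bounds over $\lambda$ against the weight $w(\lambda)=e^{-(c_1+1)\lambda-c_1\lambda q^{-1}(e^{-c_2\lambda})}$: by Fubini $h(r):=\int_1^{\infty}e^{r\lambda}w(\lambda)\,d\lambda$ is $\mu_V$-integrable, and a lower bound on $h$ (restricting to $\lambda\le-\tfrac{1}{c_2}\log q(\varepsilon r/c_1)$) gives $h(r)\ge c_3\big(1/q(C_2r)\big)^{C_3r}$ for large $r$. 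To repair your argument you should record the explicit $\lambda$-dependence of $M$ in part one and replace the direct test-function step by this integration-over-$\lambda$ device (the argument of Theorem 3.3.21 in Wang's book, which the paper cites), rather than trying to make $F_n$ itself an admissible test function.
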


\begin{remark} (1) According to (\ref{prop5.7.2}), the
probability measures
$$\mu_\varepsilon(dx)={C_\varepsilon}{(1+|x|)^{-d-\varepsilon}}\,dx$$
for any $\varepsilon>0$ or $$\mu_\beta(dx)=C_\beta
e^{-(1+|x|^\beta)}\,dx$$ for any $\beta>0$ do not satisfy the
Poincar\'{e} inequality (\ref{prop5.7.1}).

(2) As seen above, the conclusions about the concentration of measure for functional
inequalities of Dirichlet forms given in Examples \ref{exm2.1} and
\ref{exm2.2} highly depend on the function $\rho$
or $\psi$ in these Dirichlet forms. However, here for the Dirichlet
form given in Example \ref{exm2.3}, the property of the
concentration of measure for Poincar\'{e} inequality is independent
of $\psi$.
\end{remark}

\begin{proof}[Proof of Proposition \ref{prop5.7}] Similar to the proof above, the constant $C$ will be changed in
different lines, but does not depend on $n$, $\lambda$, $M$, $N$ or
$R$. For each $\lambda>0$ and $n\ge1$, set $$g_n(x):=e^{\lambda
(|x|\wedge n)}.$$ Suppose that the inequality (\ref{prop5.7.1})
holds. Then, we may and do apply $g_n$ into \eqref{prop5.7.1} to get
that
\begin{equation}\label{proprop5.7.1}
\begin{split}
\int g_n^2(x)\,\mu_{V}(dx) \leqslant &C_1 \iint
(g_n(y)-g_n(x))^2\psi(|x-y|)\,\mu_V(dy)\,\mu_V(dx)\\
& \qquad\qquad+\Big(\int g_n(x)\,\mu_{V}(dx)\Big)^2.
\end{split}
\end{equation}

For any $n\ge1$ and $\lambda>0$, set $$l_n(\lambda):=\int
e^{2\lambda (|x|\wedge n)}\mu_{V}(dx)=\int g_n^2(x)\,\mu_{V}(dx).$$
For each $N>1$,
 we have
\begin{equation*}
\begin{split}
\int (g_n(x)-g_n(y))^2&\psi(|x-y|)e^{-V(y)}\,dy\\ \leqslant&
\int_{\{|x-y|\leqslant N\}}
(g_n(x)-g_n(y))^2\psi(|x-y|)e^{-V(y)}\,dy \\
&+
\int_{\{|x-y|> N\}}(g_n(x)-g_n(y))^2\psi(|x-y|)e^{-V(y)}\,dy\\
=&:J_{1,N}(x)+J_{2,N}(x).
\end{split}
\end{equation*}
For $r>0$, set $$a(r):=\int_{\{|z|\leqslant r\}}|z|^2\psi(|z|)\,dz$$
which is well defined due to (\ref{exm2.2.1}).

First, by using the mean
value theorem, we have
\begin{equation*}
\begin{split}
J_{1,N}(x)&\leqslant \lambda e^{2\lambda N}e^{2\lambda (|x|\wedge
n)}\int_{\{|x-y|\leqslant N\}} |x-y|^2\psi(|x-y|)e^{-V(y)}dy\\
&\le \lambda a(N)e^{2\lambda N}\,e^{2\lambda (|x|\wedge n)},
\end{split}
\end{equation*} where we have used the fact that
$e^{-V}$ is bounded. For any $r>0$, define
$$q(r):=\sup_{{x\in\R^d:|x|>r}}e^{-V(x)}.$$ Then, for $M>N$ large
enough,
\begin{equation*}
\begin{split}
 &\int J_{1,N}(x)\,\mu_V(dx)\\
& = \int_{\{|x|\leqslant M\}}J_{1,N}(x)\,\mu_V(dx)
+\int_{\{|x|>M\}}J_{1,N}(x)\,\mu_V(dx)\\
&\leqslant C\lambda e^{2\lambda N}a(N)\int_{\{|x|\leqslant M\}}e^{2\lambda (|x|\wedge n)}\,\mu_V(dx) \\
&\,\,+\!\lambda e^{2\lambda N}\!
\bigg[\int_{\{|x|>M\}} \!e^{2\lambda (|x|\wedge n)}\Big(\int_{\{|x-y|\leqslant N,|y|>M-N\}}\!\! |x-y|^2\psi(|x-y|)e^{-V(y)}\,dy\Big)e^{-V(x)}\,dx\bigg]\\
&\leqslant  C\lambda e^{2\lambda( M + N)}a(N) +C\lambda e^{2\lambda
N}a(N)q(M-N)l_n(\lambda),
\end{split}
\end{equation*} where in the first inequality we have used the fact
that if $|x|>M$ and $M>N$ large enough, then $$\{y:|x-y|\leqslant
N\}\subseteq \{y:|x-y|\leqslant N,\ |y|>M-N\}.$$

 On the other hand, by the
symmetric property and also the fact that $e^{-V}$ is bounded,
\begin{equation*}
\begin{split}
 \int J_{2,N}(x)\, \mu_V(dx) &\leqslant
2\iint_{\{|x-y|>N\}}\big(e^{2\lambda (|x|\wedge n)}+e^{2\lambda
(|y|\wedge n)}\big)
\psi(|x-y|)\,\mu_V(dy) \,\mu_V(dx)\\
&\leqslant 4 \iint_{\{|x-y|>N\}} e^{2\lambda (|x|\wedge n)}
\psi(|x-y|)\,\mu_V(dy)\, \mu_V(dx)\\
&\leqslant Ck(N)l_n(\lambda),
\end{split}
\end{equation*}
where $$k(N):=\int_{\{|x|>N\}}\psi(|z|)\,dz.$$

Combining all the estimates above with (\ref{proprop5.7.1}),
\begin{equation*}
\begin{split}
l_n(\lambda)\leqslant C\big(\lambda e^{2\lambda
N}a(N)q(M-N)+k(N)\big)l_n(\lambda)+l_n^2({\lambda}/{2}) +C\lambda
e^{2\lambda (M + N)}a(N).
\end{split}
\end{equation*}
By  (\ref{proprop1.3.2}), we get for each $R>1$,
\begin{equation*}
l_n(\lambda)\leqslant C\big(\lambda e^{2\lambda
N}a(N)q(M-N)+k(N)+p(R)\big)l_n(\lambda)+ C\big(\lambda e^{2\lambda
(M +N)}a(N)+e^{2\lambda R}\big),
\end{equation*}
where $p(R):=\mu_V(|x|>R)$.

Next, we first choose the constants $R_0$ and $N_0>0$ large enough
such that
$$C(k(N_0)+p(R_0))<{1}/{2}.$$ Since $\lim_{r \rightarrow
\infty}q(r)=0$, for all $\lambda>0$, we can take
$M_0:=M_0(\lambda)>0$ large enough such that
$$C\lambda e^{2\lambda N_0}a(N_0)q(M_0-N_0)\leqslant {1}/{4},$$ e.g.
\begin{equation*}
M_0=N_0+q^{-1}\Big(\frac{e^{-2\lambda N_0}}{4C(\lambda \vee1)
a(N_0)}\Big),
\end{equation*}
where \begin{equation}\label{proprop5.7.2}q^{-1}(r):=\inf\{s: \
q(s)\leqslant r\},\end{equation}
and we use the convention that $q^{-1}(r):=0$ if $r>q(0)$. Then, we have
\begin{equation*}
l_n(\lambda)\leqslant C\big(\lambda e^{2\lambda (M_0 +
N_0)}a(N_0)+e^{2\lambda R_0}\big).
\end{equation*}
Letting $n \rightarrow \infty$, we get $\int e^{\lambda
|x|}\mu_V(dx)<\infty$ for every $\lambda>0$. This proves the first
required assertion.

Actually, according to the arguments above, there are constants
$c_1$, $c_2>0$ such that
\begin{equation}\label{proprop5.7.3}
\int e^{\lambda |x|}\,\mu_V(dx)<c_1 e^{c_1\big(\lambda+\lambda
q^{-1}(e^{-c_2\lambda}) \big)}.
\end{equation}
Now, we will follow the proof of \cite[Theorem 3.3.21]{WBook}.
Define
$$w(\lambda):=e^{-(c_1+1)\lambda-c_1\lambda q^{-1}(e^{-c_2\lambda})
}$$ and
$$h(r):=\int_1^{+\infty}e^{r\lambda}w(\lambda)\,d\lambda.$$ Then, by Fubini theorem and (\ref{proprop5.7.3}), we have
\begin{equation}\label{proprop5.7.4}
\int h(|x|)\,\mu_V(dx)=\int_1^{+\infty}\int e^{\lambda
|x|}\,\mu_V(dx)w(\lambda)\,d\lambda<\infty.
\end{equation}

For a fixed $0<\varepsilon<1$ and for $r$ large enough, define
$$\lambda_0(r):=-\frac{1}{c_2}\text{log}\Big[q \Big(\frac{\varepsilon
r}{c_1}\Big)\Big].$$ Thus,
$$e^{-c_2\lambda_0(r)}=q\Big(\frac{\varepsilon
r}{c_1}\Big).$$ According to the definition \eqref{proprop5.7.2} of
$q^{-1}$, for $r$ large enough
$$c_1q^{-1}\big(e^{-c_2\lambda_0(r)}\big)\le \varepsilon r.$$
Hence, for any $\lambda\leqslant \lambda_0(r)$ and $r$ large enough,
$$w(\lambda)\geqslant e^{-\big[(1+c_1)+\varepsilon r\big]\lambda },$$ and so, there exist some constants $c_3$, $c_4>0$ such that
for $r$ large enough,
\begin{equation*}
\begin{split}
 h(r)&\geqslant \int_1^{\lambda_0(r)}\exp\Big\{\big[(1-\varepsilon)r-(1+c_1)\big]\lambda\Big\} \,d\lambda\\
&\geqslant c_3e^{c_4 r\lambda_0(r)}\\
&= c_3\text{exp}\Big\{-\frac{c_4 r}{c_2}\text{log}\Big[q
\Big(\frac{\varepsilon r}{c_1}\Big)\Big]\Big\}.
\end{split}
\end{equation*}
Combining this with (\ref{proprop5.7.4}), we have finished the proof
of the second assertion.
\end{proof}

\bigskip

\noindent{\bf Acknowledgements.} Financial support through the
project ``Probabilistic approach to finite and infinite dimensional
dynamical systems'' funded by the Portuguese Science Foundation
(FCT) (No.\ PTDC/MAT/104173/2008) (for Xin Chen), and National
Natural Science Foundation of China (No.\ 11126350) and the
Programme of Excellent Young Talents in Universities of Fujian (No.\
JA10058 and JA11051) (for Jian Wang) are gratefully acknowledged.


\begin{thebibliography}{99}
\bibitem{BCG}
Bakry, D., Cattiaux, P.\ and Guillin, A.: Rate of convergence for ergodic continuous Markov processes: Lyapunov
versus Poincar\'{e}, \emph{J.\ Funct.\
Anal.} \textbf{254} (2008), 727--759.

\bibitem{BL}
Bobkov, S.G.\ and Ledoux, M.: Weighted Poincar\'{e}-type
inequalities for Cauchy and other convex measures, \emph{Ann.\
Probab.} \textbf{37} (2009), 403--427.


\bibitem{CGWW}
Cattiaux, P., Guillin, A., Wang, F.-Y. and Wu, L.: Lyapunov
conditions for Super Poincar\'{e} inequalties, \emph{J.\ Funct.\
Anal.} \textbf{256} (2009), 1821--1841.

\bibitem{CGW}
Cattiaux, P., Guillin, A.\ and Wu, L.: Some remarks on weighted
Logarithmic Sobolev inequality, to appear in \emph{Indiana Univ.
Math. J.} 2010, also see arXiv:1005.3908

\bibitem{CMF}
Chen, M.-F.: \emph{Eigenvalues, Inequalities, and Ergodic Theory},
Springer, Berlin 2005.

\bibitem{CZ}
Chen, Z.-Q.\ and Zhang, T.-S: Girsanov and Feynman-Kac type
transformations for symmetric Markov processes, \emph{Ann. Inst.
Henri Poincar\'{e}} \textbf{38} (2002), 475--505.



\bibitem{FOT}
Fukushima, M., Oshima, Y. and Takeda, M.: \emph{Dirichlet Forms and Symmetric Markov Processes}, Stud.\ Math.\ \textbf{19}, de Gruyter, Berlin 2011, 2nd.





\bibitem{MRS}
Mouhot, C., Russ, E. and Sire, Y.: Fractional Poincar\'{e}
inequalities for general measures, \emph{J.\ Math.\ Pures Appl.}
\textbf{95} (2011), 72--84.





\bibitem{RW}
R\"{o}ckner, M.\ and Wang, F.-Y.: Harnack and functional
inequalities for generalized Mehler semigroups, \emph{J.\ Funct.\
Anal.} \textbf{203} (2003), 237--261.







\bibitem{S}
Song, R.: Estimates on the transition densities of Girsanov
transforms of symmetric stable processes, \emph{J.\ Theor.\ Probab.}
\textbf{19} (2006), 487--507.


\bibitem{W3}
Wang, F.-Y.: From super-Poincar\'{e} to weighted log-Sobolev and
entropy-cost inequalities, \emph{J.\ Math.\ Pures Appl.} \textbf{90}
(2008), 270--285.

\bibitem{WBook}
Wang, F.-Y.: \emph{Functional Inequalities, Markov Processes and
Spectral Theory}, Science Press, Beijing 2005.

\bibitem{WW}
Wang, F.-Y. and Wang, J.: Functional inequalities for stable-like
Dirichlet forms, \emph{Preprint}, 2012, also see arXiv:1205.4508

\bibitem{WJ1}
Wang, J.: Criteria for ergodicity of L\'{e}vy type operators in dimension one, \emph{Stoch. Proc. Appl.} \textbf{118} (2008), 1909--1928.

\bibitem{WJ2}
Wang, J.: Symmetric L\'{e}vy type operator, \emph{Acta Math. Sin.
Eng. Ser.} \textbf{25} (2009), 39--46.

\bibitem{WJ3}
Wang, J.: Lyapunov drift conditions for general symmetric jump
processes, \emph{Acta Math. Scientia A. (in Chinese)} \textbf{31}
(2011), 785--795.

\end{thebibliography}
\end{document}